\DeclareMathAlphabet{\mathpzc}{OT1}{pzc}{m}{it}
\newtheorem{theorem}{Theorem}[section]
\newtheorem{theorem-definition}[theorem]{Theorem-Definition}
\newtheorem{lemma-definition}[theorem]{Lemma-Definition}
\newtheorem{definition-prop}[theorem]{Proposition-Definition}
\newtheorem{prop}[theorem]{Proposition}
\newtheorem{lemma}[theorem]{Lemma}
\newtheorem{cor}[theorem]{Corollary}
\newtheorem{definition}[theorem]{Definition}
\newtheorem{example}[theorem]{Example}
\newenvironment{remark}{\vspace{4pt}\noindent\textbf{Remark.}}{\qed\vspace{4pt}}
\newenvironment{remarks}{\vspace{4pt}\noindent\textbf{Remarks.}}{\qed\vspace{4pt}}
\newtheorem{stelling?}{Stelling(?)}[section]
\newcommand{\LL}{\ensuremath{\mathbb{L}}}
\newcommand{\N}{\ensuremath{\mathbb{N}}}
\newcommand{\Z}{\ensuremath{\mathbb{Z}}}
\newcommand{\X}{\ensuremath{\mathscr{X}}}
\renewcommand{\X}{\ensuremath{\mathfrak{X}}}
\newcommand{\Spec}{\ensuremath{\mathrm{Spec}\,}}
\newcommand{\Spf}{\ensuremath{\mathrm{Spf}\,}}
\newcommand{\Sp}{\ensuremath{\mathrm{Sp}\,}}
\numberwithin{equation}{section} \hyphenpenalty=6000
\begin{document}
\title{The N\'eron component series of an abelian variety}
\author{Lars Halvard Halle}
\address{Institut f{\"u}r Algebraische Geometrie\\
Gottfried Wilhelm Leibniz Universit{\"a}t Hannover\\ Welfengarten
1
\\
30167 Hannover \\ Deutschland} \email{halle@math.uni-hannover.de}
\author[Johannes Nicaise]{Johannes Nicaise}
\address{KULeuven\\
Department of Mathematics\\ Celestijnenlaan 200B\\3001 Heverlee \\
Belgium} \email{johannes.nicaise@wis.kuleuven.be}

\thanks{The first author was partially supported by the Fund for Scientific
Research-Flanders (G.0318.06) and by DFG under grant Hu 337/6-1.
 The second author was partially supported by ANR-06-BLAN-0183 and ANR-07-JCJC-0004.
}
\begin{abstract}
We introduce the N\'eron component series of an abelian variety
$A$ over a complete discretely valued field. This is a power
series in $\Z[[T]]$, which measures the behaviour of the number of
components of the N\'eron model of $A$ under tame ramification of
the base field. If $A$ is tamely ramified, then we prove that the
N\'eron component series is rational. It has a pole at $T=1$,
whose order equals one plus the potential toric rank of $A$. This
result is a crucial ingredient of our proof of the motivic
monodromy conjecture for abelian varieties. We expect that it
extends to the wildly ramified case; we prove this if $A$ is an
elliptic curve, and if $A$ has potential purely multiplicative
reduction.
\end{abstract}

\maketitle
\section{Introduction}\label{sec-intro}
Let $K$ be a complete discretely valued field with ring of
integers $R$ and separably closed residue field $k$. We denote by
$p$ the characteristic exponent of $k$. Let $A$ be an abelian
variety over $K$. We denote its N\'eron model by $ \mathcal{A} $,
and the special fiber of $\mathcal{A}$ by $\mathcal{A}_s$. The
group $\Phi_A$ of connected components of $\mathcal{A}_s$ is a
finite abelian group, whose order we denote by $ \phi(A)$. In this
paper, we study how $\Phi_A$ and $\phi(A)$ vary under finite
extension of the base field $K$.


We denote by $\N'$ the set of strictly positive integers that are
prime to $p$, and we fix a separable closure $K^s$ of $K$. For
each element $d$ of $\N'$, there exists a unique extension $K(d)$
of $K$ in $K^s$. We define the \emph{N\'eron component series}
$S_{\phi}(A;T)$ of $A$ as
$$ S_{\phi}(A;T) = \sum_{d \in \mathbb{N}'} \phi(A\times_K K(d)) T^d \in \mathbb{Z}[[T]] $$
We will show in Theorem \ref{thm-ratseries} that, when $A$ is
tamely ramified, the series $S_{\phi}(A;T)$ is rational, and we
determine the order of its pole at $T=1$ (it equals one plus the
\emph{potential toric rank} of $A$). Recall that $A$ is tamely
ramified iff the minimal extension $L$ of $K$ where $A$ acquires
semi-abelian reduction, is a tame extension of $K$.

This result is of independent interest, but our main motivation
lies elsewhere: it is a crucial ingredient of our proof of the
\emph{motivic monodromy conjecture} for abelian varieties, a
global version of Denef and Loeser's motivic monodromy conjecture
for complex hypersurface singularities. Our results on
$S_{\phi}(A;T)$ allow us to prove the rationality of the motivic
zeta function $Z_A(\LL^{-s})$ of $A$, and to determine the order
of its (unique) pole. We've shown that this pole is equal to
Chai's base change conductor $c(A)$ of $A$ \cite{chai}. Our proof
of the conjecture will appear in \cite{HaNi-new}, see also
\cite{HaNi} for a preliminary version of the paper.

The key technical result in the present paper is Theorem
\ref{thm-main}. We denote by $t(A)$ the reductive rank of
$\mathcal{A}_s^o$. Under the assumption that $A$ is tamely
ramified,
 we prove that
$$ \phi(A \times_K K') = d^{t(A)} \phi(A) $$
for every finite extension $K'$ of $K$ such that $ d = [K':K] $ is
prime to $[L:K]$. The main tool we use in order to establish
Theorem \ref{thm-main} is the theory of \emph{rigid
uniformization}, in the sense of \cite{B-X}.

 It is natural to ask what kind of properties $ S_{\phi}(A;T) $ might have if $A$ is \emph{wildly} ramified.
We expect that Theorem \ref{thm-ratseries} (in particular, the
rationality of $S_{\phi}(A;T)$) should also hold for wildly
ramified abelian varieties. We prove this if $A$ has purely
multiplicative reduction, and if $A$ is an elliptic curve
(Proposition \ref{proposition-wildelliptic}). The general case
will be investigated in future work.

We conclude this introduction with a short overview of the paper.
In Section \ref{sec-prelim}, we gather some preliminaries on rigid
uniformization and the smooth rigid and formal sites. In Section
\ref{sec-torrank} we prove some auxiliary results on maximal
 subtori of algebraic groups in the context of N\'eron models. In
Section \ref{sec-semiabred} we discuss semi-abelian and good
reduction of semi-abelian varities. These three sections form the
technical preparation for the main results of the paper.

 Section
\ref{sec-main} is devoted to proving the key result Theorem
\ref{thm-main}, making extensive use of the results in \cite{B-X}.
In Section \ref{sec-ratcomp}, we prove the rationality of the
N\'eron component series, and we compute the order of its pole at
$T=1$ (Theorem \ref{thm-ratseries} and Proposition
\ref{proposition-wildelliptic}).

\section{Preliminaries}\label{sec-prelim}
\subsection{Notation}
We denote by $R$ a complete discrete valuation ring, with residue
field $k$ and quotient field $K$. We assume that $k$ is separably
closed. We denote by $p$ the characteristic exponent of $k$, and
by $\N'$ the ordered set of strictly positive integers that are
prime to $p$. For every $R$-scheme $X$, we denote by $\widehat{X}$
its formal completion along the special fiber.

For every field $F$, we fix a separable closure $F^s$. We denote
by $I$ the inertia group $Gal(K^s/K)$. For each element $d$ of
$\N'$, the field $K$ has a unique degree $d$ extension in $K^s$,
which we denote by $K(d)$. We denote by $R(d)$ the normalization
of $R$ in $K(d)$.

We denote by
$$(\cdot)_K:(Schemes/R)\rightarrow (Schemes/K)$$ the generic fiber
functor, and by
$$(\cdot)_s:(Schemes/R)\rightarrow (Schemes/k)$$ the special fiber
functor.
 We denote by $(stft/R)$ the category of separated
formal $R$-schemes locally topologically of finite type, by
$(Rig/K)$ the category of rigid $K$-varieties, and by
$$(\cdot)_\eta:(stft/R)\rightarrow (Rig/K)$$ the generic fiber
functor.

For every field $F$, an algebraic $F$-group is a group $F$-scheme
that is locally of finite type.
 For every smooth algebraic  $K$-group
$G$ that admits a N\'eron $lft$-model $\mathcal{G}$,
we denote by $\Phi_{G}$ the constant abelian group
$$\pi_0(\mathcal{G}_s)=\mathcal{G}_s/\mathcal{G}^o_s$$ of connected
components of $\mathcal{G}_s$, and by $\phi(G)\in
\N\cup\{\infty\}$ the cardinality of $\Phi_{G}$.

If $H$ is a group scheme over a scheme $S$, and $n$ a positive
integer, then we denote by $_n H$ the kernel of multiplication by
$n$ on $H$. Likewise, if $H$ is a constant group, we denote by $_n
H$ the subgroup of elements killed by $n$.

\subsection{Rigid uniformization and smooth
topology}\label{subsec-rigunif}  Let $A$ be an abelian
$K$-variety. We denote by $L$ the minimal extension of $K$ in
$K^s$ where $A$ acquires semi-abelian reduction, and we put
$e=[L:K]$. Recall that $A$ is tamely ramified iff $e$ is prime to
$p$ (since the residue field $k$ of $K$ is separably closed).

 A \textit{rigid uniformization}
of $A$ consists of the following data \cite[1.1]{B-X}:
\begin{itemize}
\item a semi-abelian $K$-variety $E$ that is the extension of an
abelian $K$-variety $B$ with potential good reduction by a
$K$-torus $T$:
$$0\rightarrow T\rightarrow E\rightarrow B\rightarrow 0$$

\item a lattice $M$ in $E$, of rank $\mathrm{dim}(T)$, and a
faithfully flat morphism of rigid $K$-varieties
$$E^{an}\rightarrow A^{an}$$
with kernel $M^{an}$.
 Here $(\cdot)^{an}$
denotes the rigid analytic GAGA functor.
\end{itemize}
 Beware that the group
$K$-scheme $M$ is not necessarily constant, only locally constant
for the \'etale topology.

Let $K'$ be a finite extension of $K$ in $K^s$, of degree $d$. If
we denote by $(\cdot)'$ the base change functor
$(Sch/K)\rightarrow (Sch/K')$, then
 the data
$$
\begin{CD}
0@>>> T'@>>> E'@>>> B'@>>> 0
\\ 0@>>> (M')^{an} @>>> (E')^{an} @>>> (A')^{an} @>>> 0
\end{CD}$$
define a rigid uniformization of $A'=A\times_K K'$. The objects
$M$ and $T$ split over $L$, and $B\times_K L$ has good reduction.

We denote by $R'$ the normalization of $R$ in $K'$, and by $k'$
the residue field of $R'$. The latter is a finite purely
inseparable extension of $k$. We'll consider the small rigid
smooth sites $(\Sp K)_{sm}$ and $(\Sp K')_{sm}$, and the small
formal smooth sites $(\Spf R)_{sm}$ and $(\Spf R')_{sm}$
\cite[\S\,4]{B-X}. We obtain a commutative diagram
$$\begin{CD}
(\Sp K')_{sm}@>j'>> (\Spf R')_{sm} 
\\ @Vh_K VV @VVhV 
\\ (\Sp K)_{sm}@>>j> (\Spf R)_{sm} 
\end{CD}$$
We denote by $(Ab_K),\ldots$ the category of abelian sheaves on
$(\Sp K)_{sm},\ldots$ For every commutative algebraic $K$-group
$X$, we denote by $\mathscr{F}_{X}$ the abelian sheaf on $(\Sp
K)_{sm}$ represented by $X^{an}$ \cite[3.3]{B-X}. Note that
$(h_K)^*\mathscr{F}_X$ is canonically isomorphic to
$\mathscr{F}_{X\times_K K'}$.

\begin{remark}
Beware that the functor
$$X\mapsto \mathscr{F}_X$$ from the category of commutative
algebraic $K$-groups to the category $(Ab_K)$ is \textit{not}
faithful. For instance, if $K$ has characteristic $p>1$ and
$$X=\alpha_p=\Spec K[x]/(x^p)$$ then $\mathscr{F}_X=0$.
\end{remark}

If $X$ is a smooth commutative algebraic $K$-group that admits a
N\'eron $lft$-model $\mathcal{X}$ over $R$, then the formal
completion $\widehat{\mathcal{X}}$ is a formal N\'eron model for
$X^{an}$ \cite[6.2]{formner}, and $\widehat{\mathcal{X}}$
represents the sheaf $j_* \mathscr{F}_X$ on $(\Spf R)_{sm}$.

\begin{lemma}\label{lemm-exact}
The functors \begin{eqnarray*} (h_K)^*&:&(Ab_K)\rightarrow (Ab_{K'}) \\
(h_K)_*&:&(Ab_{K'})\rightarrow (Ab_K)
\end{eqnarray*} are exact.
\end{lemma}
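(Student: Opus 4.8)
The plan is to identify $h_K$ with the morphism of sites attached to the base-change functor
$$u\colon (\Sp K)_{sm}\longrightarrow (\Sp K')_{sm},\qquad U\longmapsto U\times_K K'.$$
With this identification $((h_K)_*\mathscr{F})(U)=\mathscr{F}(U\times_K K')$ for every abelian sheaf $\mathscr{F}$ on $(\Sp K')_{sm}$, and $(h_K)^*$ is the left adjoint of $(h_K)_*$. The functor $u$ is continuous, since it carries a surjective family of smooth morphisms over $\Sp K$ to one over $\Sp K'$, and it commutes with finite limits, since it preserves fibre products and the final object; hence it induces a morphism of topoi. Because $K'/K$ is a finite \emph{separable} extension, base change along $K'/K$ moreover admits a right adjoint, the Weil restriction $\mathrm{Res}_{K'/K}$, and the geometric input I shall use is that $\mathrm{Res}_{K'/K}$ preserves smoothness (so that it restricts to a functor $(\Sp K')_{sm}\to(\Sp K)_{sm}$ right adjoint to $u$) and takes smooth surjective morphisms to smooth surjective morphisms.

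Granting this, exactness of $(h_K)^*$ is formal: the inverse image functor of a morphism of topoi is always exact. (Directly: $(h_K)^*$ is right exact as a left adjoint, and it is left exact because it is the sheafification of a left Kan extension along the continuous, finite-limit preserving functor $u$.)

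For $(h_K)_*$, left exactness is automatic since it is a right adjoint, so the point is that it preserves epimorphisms. Let $\varphi\colon\mathscr{F}\to\mathscr{G}$ be an epimorphism in $(Ab_{K'})$, let $U$ be an object of $(\Sp K)_{sm}$, and let $s\in((h_K)_*\mathscr{G})(U)=\mathscr{G}(U\times_K K')$. Since $\varphi$ is an epimorphism and the small smooth site admits finite disjoint unions, I may choose a single smooth surjective morphism $q\colon W\to U\times_K K'$ of rigid $K'$-varieties and a section $\widetilde{s}\in\mathscr{F}(W)$ with $\varphi(\widetilde{s})=q^{*}s$. Applying $\mathrm{Res}_{K'/K}$ produces a smooth surjective morphism $\mathrm{Res}_{K'/K}(q)$, and pulling it back along the unit $U\to\mathrm{Res}_{K'/K}(U\times_K K')$ of the adjunction gives a smooth surjective morphism $V\to U$. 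As $V$ is smooth over $U$ and $U$ is smooth over $\Sp K$, the variety $V$ lies in $(\Sp K)_{sm}$, so $\{V\to U\}$ is a covering. Finally, the counit of the adjunction furnishes a morphism $V\times_K K'\to W$ lying over $U\times_K K'$ — the relevant triangle commutes by the triangle identities — and along it $s|_{V\times_K K'}$ is pulled back from $q^{*}s=\varphi(\widetilde{s})$, so it lies in the image of $\varphi$. Hence $(h_K)_*\varphi$ is an epimorphism, and $(h_K)_*$ is exact.

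I expect the main obstacle to be the geometric input invoked above: that Weil restriction of rigid $K'$-varieties along $K'/K$ exists, preserves smoothness, and sends smooth surjective morphisms to smooth surjective morphisms. Surjectivity is the delicate point, since $K'/K$ may be wildly ramified; it is governed by the separability of $K'/K$, which makes $K'\otimes_K\overline{K}$ a product of copies of $\overline{K}$, so that a smooth surjection of $K'$-varieties base changes factorwise to smooth surjections whose Weil restriction remains surjective. These properties of the norm functor are available in the framework of \cite{B-X}, and with them in hand both exactness assertions follow as above.
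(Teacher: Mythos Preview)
Your argument is correct; for $(h_K)^*$ it essentially coincides with the paper's (both amount to observing that the inverse image for a morphism of topoi is exact, the paper phrasing this as ``$(h_K)^*$ is restriction since $h_K$ is smooth''). For the right exactness of $(h_K)_*$, however, you take a genuinely different and heavier route. You build, for each object $U$ and each section $s$, a tailored smooth cover $V\to U$ via Weil restriction and the base-change/restriction adjunction; the properties of $\mathrm{Res}_{K'/K}$ you need (existence on smooth rigid $K'$-varieties, preservation of smoothness and of smooth surjections along a finite separable extension) do hold, though the natural reference in the paper's bibliography is \cite{bertapelle} rather than \cite{B-X}. The paper instead passes once and for all to a finite Galois extension $K''/K$ containing $K'$: since $g\colon\Sp K''\to\Sp K$ is itself a covering in $(\Sp K)_{sm}$, it suffices to check surjectivity after applying $g^*$, and the splitting $K'\otimes_K K''\cong\prod_{\gamma}K''$ yields
\[
g^*(h_K)_*\mathscr{H}\;\cong\;\bigoplus_{\gamma\colon K'\to K''}(g')^*\mathscr{H}
\]
for every abelian sheaf $\mathscr{H}$ on $(\Sp K')_{sm}$, so surjectivity follows immediately from exactness of $(g')^*$. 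Both proofs ultimately exploit the separability of $K'/K$ through a product decomposition after base change; the paper's version is shorter and sidesteps the representability and covering questions for Weil restriction on rigid spaces, while yours is more structural and would adapt to any situation in which the underlying functor on sites admits a continuous right adjoint preserving coverings.
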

\begin{proof}
Since $h_K$ is smooth, the functor $(h_K)^*$ is simply the
restriction from $(\Sp K)_{sm}$ to $(\Sp K')_{sm}$, so it is
exact. It remains to show that $(h_K)_*$ is right exact. Let
$\mathscr{F}\rightarrow \mathscr{G}$ be a surjection of abelian
sheaves on $(\Sp K')_{sm}$. Let $K''$ be a finite Galois extension
of $K$ containing $K'$. The morphism $g:\Sp K''\rightarrow \Sp K$
is a covering in the rigid smooth site on $\Sp K$, so that it
suffices to show that
$$\alpha:g^*(h_K)_*\mathscr{F}\rightarrow g^*(h_K)_*\mathscr{G}$$
is surjective. If we denote by $g'$ the morphism $\Sp
K''\rightarrow \Sp K'$, then $g=h_K\circ g'$ and
 we have for every abelian sheaf $\mathscr{H}$ on $(\Sp K')_{sm}$ a canonical isomorphism
$$g^*(h_K)_*  \mathscr{H}\cong \bigoplus_{\gamma:K'\rightarrow K''} (g')^*\mathscr{H}$$
where $\gamma$ runs over the morphisms of $K$-algebras
$K'\rightarrow K''$. Surjectivity of $\alpha$ now follows from
right exactness of $(g')^*$.
\end{proof}

\subsection{The trace map}\label{subsec-trace}
We keep the notations of Section \ref{subsec-rigunif}. Let
$\mathscr{F}$ be an abelian sheaf on $(\Sp K)_{sm}$, and consider
the tautological morphism
$$\tau:\mathscr{F}\rightarrow (h_K)_* (h_K)^* \mathscr{F}$$
We will define a \textit{trace map}
$$tr:(h_K)_* (h_K)^* \mathscr{F}\rightarrow \mathscr{F}$$ such
that the composition $tr\circ \tau$ is multiplication by
$d=[K':K]$.

Let $K''$ be a Galois extension of $K$ that contains $K'$. The
morphism $g:\Sp K''\rightarrow \Sp K$ is a covering in the rigid
smooth site on $\Sp K$, and we have a canonical isomorphism
$$g^*(h_K)_* (h_K)^* \mathscr{F}\cong \bigoplus_{\gamma:K'\rightarrow K''} g^*\mathscr{F}$$
where $\gamma$ runs over the morphisms of $K$-algebras
$K'\rightarrow K''$. Consider the morphism
$$tr_{K''}:\bigoplus_{\gamma:K'\rightarrow K''}
g^*\mathscr{F}\rightarrow g^*\mathscr{F}:(\alpha_{\gamma})\mapsto
\sum_{\gamma}\alpha_{\gamma}$$ It is invariant under the action of
the Galois group $Gal(K''/K)$. This implies that $tr_{K''}$
satisfies the gluing conditions w.r.t. the covering $g$, so that
$tr_{K''}$ descends to a morphism of sheaves
$$tr:(h_K)_* (h_K)^* \mathscr{F}\rightarrow \mathscr{F}$$ on $(\Sp
K)_{sm}$. The composition $tr\circ \tau$ is multiplication by $d$,
since this holds after base change to $K''$.

\section{Toric rank}\label{sec-torrank}
\subsection{Subtori of algebraic groups}
We recall some results on subtori of algebraic groups. We focus on
the case of smooth commutative algebraic groups over a field.

\begin{definition}
Let $G$ be an algebraic group over a field $F$. A subtorus $T$ of
$G$ is called a maximal subtorus of $G$ if, for some algebraically
closed field $F'$ containing $F$, the algebraic group $G\times_F
F'$ does not admit any subtorus that is strictly larger than
$T\times_F F'$.
\end{definition}
If this property holds for some algebraically closed extension
$F'$, then it holds for all algebraically closed overfields of $F$
\cite[XII.1.2]{sga3.2}.

\begin{prop}\label{prop-maxtor}
Let $G$ be a smooth commutative algebraic group over a field $F$.
\begin{enumerate}
\item The algebraic group $G$ admits a unique maximal subtorus.
\item If $T$ is an $F$-torus, then every morphism of algebraic
$F$-groups $T\rightarrow G$ factors through the maximal subtorus
of $G$.
\end{enumerate}
\end{prop}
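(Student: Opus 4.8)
The plan is to treat the two statements in sequence, reducing (1) to the structure theory of commutative algebraic groups and reducing (2) to a rigidity argument for tori. For statement (1), I would first reduce to the case where $F$ is algebraically closed: existence and uniqueness of the maximal subtorus can be checked after base change, since by \cite[XII.1.2]{sga3.2} maximality is insensitive to the choice of algebraically closed overfield, and a subtorus defined over $F'$ which is unique is automatically Galois-stable, hence descends when $F'/F$ is separable; a further short argument handles the inseparable part (or one simply invokes that the formation of the maximal torus commutes with field extension for smooth groups, \cite[XII, XIV]{sga3.2}). Over an algebraically closed field, $G$ is a smooth connected-by-finite commutative group; replacing $G$ by its identity component $G^o$ (a subtorus is automatically connected, so it lands in $G^o$) we may assume $G$ connected. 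Then Chevalley's theorem gives an exact sequence $0\to H\to G^o\to \mathcal{B}\to 0$ with $H$ affine (hence, being smooth connected commutative affine, isomorphic to $\mathbb{G}_m^r\times \mathbb{G}_a^s$ over the algebraically closed field up to the unipotent/torus decomposition) and $\mathcal{B}$ an abelian variety. Any subtorus of $G^o$ maps to zero in $\mathcal{B}$ (a torus has no nonconstant maps to an abelian variety), hence lies in $H$; and any subtorus of $H=T_0\times U$ with $U$ unipotent projects trivially to $U$, hence lies in $T_0$, the evident maximal subtorus. So the unique maximal subtorus of $G$ is the maximal torus $T_0$ of the affine part of $G^o$.

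For statement (2), let $\varphi:T\to G$ be a morphism of algebraic $F$-groups with $T$ a torus. Again one may base change to an algebraically closed field, prove the factorization there, and descend (the maximal subtorus and the factorization are canonical, hence Galois-equivariant; uniqueness handles inseparability as in (1)). Over $\overline{F}$, the image $\varphi(T)$ is a smooth connected commutative subgroup of $G$ which is a quotient of a torus, hence is itself a torus; therefore $\varphi(T)$ is contained in the maximal subtorus of $G$ constructed in (1), which is exactly the assertion. The only mild subtlety is to know that a quotient of a torus is a torus: this is standard (it follows, e.g., because the quotient is affine, smooth, connected, commutative, and has no unipotent part — a nontrivial $\mathbb{G}_a$-quotient would contradict that $\mathrm{Hom}(\mathbb{G}_m,\mathbb{G}_a)=0$ and diagonalizability is preserved under quotients of diagonalizable groups).

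The main obstacle is not the algebraically closed case — there everything follows from Chevalley's structure theorem plus the two trivial non-existence facts ($\mathrm{Hom}(\mathrm{torus},\mathrm{abelian\ variety})=0$ and $\mathrm{Hom}(\mathbb{G}_m,\mathbb{G}_a)=0$) — but rather making the descent to a general (possibly imperfect) field $F$ clean. Over an imperfect field one must be careful that the maximal torus of the affine part of $G^o$ is defined over $F$, not merely over $F^{perf}$; the cited result \cite[XII.1.2]{sga3.2}, together with the uniqueness in (1), is what guarantees this, since a unique object stable under all automorphisms of $\overline{F}/F$ descends. I would therefore isolate the descent step as a short lemma: uniqueness over $\overline{F}$ forces the maximal subtorus to be $\mathrm{Gal}(F^s/F)$-invariant and to descend through the purely inseparable part by functoriality, giving an $F$-subtorus which is maximal by construction and unique by base change. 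With that in hand, statement (2) descends for free because the factorization $\varphi:T\to T_G\hookrightarrow G$ is the unique one through the (now $F$-rational) maximal subtorus.
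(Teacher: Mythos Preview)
Your approach is correct in outline but differs from the paper's, mainly in part (1). The paper dispatches (1) in one line by citing \cite[XIV.1.1]{sga3.2} for existence and \cite[XII.7.1(b)]{sga3.2} (together with commutativity of $G$) for uniqueness, over an arbitrary base field; no descent argument is needed. Your route instead constructs the maximal subtorus explicitly over $\overline{F}$ via Chevalley's theorem and then descends. The Galois descent from $F^s$ to $F$ is fine, but the ``further short argument'' for the purely inseparable step $\overline{F}/F^s$ is not actually short: invariance under $\mathrm{Aut}(\overline{F}/F)$ does not by itself force a closed subscheme to descend through a purely inseparable extension, and making this precise (e.g.\ via rigidity of tori or the functorial characterization of the maximal torus) amounts to redoing part of \cite[XIV]{sga3.2}. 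Since you already offer the SGA3 citation as an alternative, the cleanest fix is simply to take that route --- which is exactly what the paper does.

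For (2) the paper argues slightly differently: rather than showing that the image $\varphi(T)$ is a torus and hence lies in the maximal subtorus $S$, it shows that the composition $T \to G \to G/S$ is trivial, by observing that over an algebraically closed field $G/S$ is an extension of an abelian variety by a unipotent group and $T$ admits no nontrivial homomorphism to either factor (\cite[XVII.2.4]{sga3.2}, \cite[2.3]{conrad-chevalley}). Your argument via ``a quotient of a torus is a torus'' is equally valid; both ultimately rest on the same two vanishing facts together with Chevalley. One small simplification: once $S$ is known to be defined over $F$ by (1), the factorization in (2) requires no separate descent step, since factoring a morphism through a closed subgroup scheme can be checked after any faithfully flat base change.
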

\begin{proof}
(1) Existence follows from \cite[XIV.1.1]{sga3.2}, uniqueness from
\cite[XII.7.1(b)]{sga3.2} and commutativity of $G$.

(2) Denote by $S$ the maximal subtorus of $G$. We have to show
that all morphisms of algebraic $F$-groups
$$f:T\rightarrow G/S$$ are trivial. We may assume that $F$ is
algebraically closed, and that $G$ is connected. Since $S$ is the
maximal subtorus of $G$, we know by the Chevalley decomposition
\cite{conrad-chevalley} that $G/S$ is the extension of an abelian
variety $A$ by a unipotent $F$-group $U$. Since $T$ does not admit
any non-trivial morphism to $U$ \cite[XVII.2.4]{sga3.2} or to $A$
\cite[2.3]{conrad-chevalley}, we see that $f$ is trivial.
\end{proof}

\begin{definition}
Let $G$ be a smooth commutative algebraic group over a field $F$.
The reductive rank $\rho(G)$ of $G$ is the dimension of the
maximal subtorus of $G$.
\end{definition}

\begin{prop}\label{prop-redzero}
Let $G$ be a smooth commutative algebraic group over a field $F$.
If $T$ is the maximal subtorus of $G$, then
$$\rho(G/T)=0$$
and there exist no non-trivial morphisms from an $F$-torus to
$G/T$.
\end{prop}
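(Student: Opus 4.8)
The plan is to deduce both assertions from Proposition \ref{prop-maxtor} by a pure formal argument, without re-entering the structure theory of algebraic groups. Write $S=G/T$ for the quotient, which is again a smooth commutative algebraic $F$-group (smoothness and commutativity pass to quotients by a smooth normal subgroup). By Proposition \ref{prop-maxtor}(1), $S$ has a unique maximal subtorus $T_0$; I claim $T_0$ is trivial, which gives $\rho(S)=\dim T_0 = 0$ immediately, and then the second assertion follows from Proposition \ref{prop-maxtor}(2) applied to $S$: any morphism of algebraic $F$-groups from an $F$-torus to $S$ factors through $T_0=0$, hence is trivial. So everything reduces to showing $T_0=0$.

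To see $T_0=0$, I would pull it back along the quotient map $q:G\to S$. Let $\widetilde{T}=q^{-1}(T_0)$, a closed subgroup scheme of $G$ sitting in an extension $0\to T\to \widetilde{T}\to T_0\to 0$ of a torus by a torus. Over an algebraically closed field such an extension splits (extensions of tori by tori vanish, by rigidity of tori — e.g. $\mathrm{Ext}^1(T_0,T)=0$ since any such extension is again a torus, or one can argue via characters), so $\widetilde{T}$ is a torus, and in any case $\widetilde{T}$ is a smooth connected group of multiplicative type containing $T$ with $\widetilde{T}/T$ a torus. The key point is that $\widetilde{T}^o$ is then a subtorus of $G$: a smooth connected group scheme which is an extension of a torus by a torus is a torus. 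By maximality of $T$ in $G$ (Definition and Proposition \ref{prop-maxtor}(1)), we get $\widetilde{T}^o\subseteq T$ after base change to an algebraically closed field, forcing $\dim T_0 = \dim \widetilde{T} - \dim T = 0$, so $T_0$ is finite; being a torus, it is trivial.

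Alternatively, and perhaps more cleanly, I would avoid discussing extensions of tori by working directly with Proposition \ref{prop-maxtor}(2): compose the inclusion $T_0\hookrightarrow S=G/T$ with nothing — instead, lift. Since $T_0$ is a torus mapping to $G/T$, and by Proposition \ref{prop-maxtor}(2) every morphism from a torus to $G/T$ factors through the maximal subtorus $T_0$ of $G/T$ — that's circular — so the cleaner route is the pullback argument above. The main obstacle is precisely the lemma "a smooth connected extension of a torus by a torus is a torus"; over an algebraically closed field this is standard (the maximal torus of $\widetilde{T}$ has the same dimension as $\widetilde{T}$ since $\widetilde{T}/T$ is a torus and $T$ is central, and a connected smooth affine group equal in dimension to its maximal torus is that torus), and one may cite \cite[XII]{sga3.2} for it. Once that is in hand, the rest is bookkeeping, and the base-change-to-algebraically-closed-field step is harmless because, as noted after the definition of maximal subtorus, maximality is insensitive to the choice of algebraically closed overfield.
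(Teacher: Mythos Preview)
Your argument is correct and is essentially the paper's own proof: both pull back a subtorus of $G/T$ along $q:G\to G/T$, observe that the resulting extension of a torus by a torus is again a torus, and then invoke maximality of $T$ in $G$. The only cosmetic differences are that the paper works with an arbitrary subtorus $S\subset G/T$ rather than the maximal one, writes the pullback as the fiber product $G\times_{G/T}S$, and gives a clean reference for the key step (an extension of two tori is of multiplicative type by \cite[IX.8.2]{sga3.2}, and is smooth and connected, hence a torus); also note that your $\widetilde{T}$ is already connected, so passing to $\widetilde{T}^o$ is unnecessary, and your ``alternative'' paragraph can simply be dropped.
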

\begin{proof}
By Proposition \ref{prop-maxtor}, it suffices to show that
$\rho(G/T)=0$. Let $S$ be a subtorus of $G/T$. Then we have a
commutative diagram with exact rows and columns
$$\begin{CD}
0@>>> T@>>> G@>>> G/T @>>> 0
\\ @. @A= AA @AAA @AAA @.
\\ 0@>>> T@>>> G\times_{G/T} S@>>> S @>>> 0
\\ @. @. @AAA @AAA @.
\\ @. @. 0 @. 0 @.
\end{CD}$$ Since $G\times_{G/T} S$ is an extension of two
$F$-tori, it is again an $F$-torus (it is of multiplicative type
by \cite[IX.8.2]{sga3.2}, and it is smooth and connected). Since
$T$ is the maximal subtorus of $G$, we have that $T\rightarrow
G\times_{G/T}S$ is an isomorphism, so that $S$ must be trivial.
\end{proof}

\begin{prop}\label{prop-maxsplittor}
If $G$ is a smooth commutative algebraic group over a field $F$,
then $G$ admits a unique split subtorus $T_{sp}$ such that for
every split $F$-torus $S$ and every morphism of algebraic
$F$-groups $f:S\rightarrow G$, the morphism $f$ factors through
$T_{sp}$.

If $G$ is a torus with character module $X(G)$, then the dimension
of $T_{sp}$ is equal to the rank of the free $\Z$-module
$X(G)^{Gal(F^s/F)}$.
\end{prop}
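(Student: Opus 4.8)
The plan is to reduce to the case where $G$ is a torus, using Proposition \ref{prop-maxtor}, and then to argue on the level of character lattices.

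First, let $T$ be the maximal subtorus of $G$ furnished by Proposition \ref{prop-maxtor}(1). Every split $F$-torus is in particular an $F$-torus, so Proposition \ref{prop-maxtor}(2) shows that any morphism of algebraic $F$-groups from a split torus to $G$ factors through $T$; in particular every split subtorus of $G$ is contained in $T$. Hence it suffices to construct the maximal split subtorus $T_{sp}$ of the torus $T$ together with the universal property inside $T$ --- the universal property relative to $G$ then follows formally by composing with the inclusion $T \hookrightarrow G$.

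Next I would pass to character lattices. Set $\Gamma = Gal(F^s/F)$ and let $X = X^*(T)$, a finitely generated free $\Z$-module with a continuous action of $\Gamma$, that is, an action through a finite quotient (since $T$ splits over a finite separable extension of $F$). Under the contravariant equivalence between $F$-tori and such lattices \cite{sga3.2}, closed immersions of tori correspond to surjections of lattices, and split tori correspond to lattices with trivial $\Gamma$-action. Let $X_\Gamma$ denote the coinvariants of $X$ and put $\overline{X} = X_\Gamma / (X_\Gamma)_{\mathrm{tors}}$; this is a free $\Z$-module with trivial $\Gamma$-action, and the surjection $X \twoheadrightarrow \overline{X}$ defines a split subtorus $T_{sp} \hookrightarrow T$. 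The core of the argument is the universal property, and it is short: if $S$ is a split $F$-torus and $f \colon S \to T$ a homomorphism, then the dual map $f^* \colon X \to X^*(S)$ annihilates every element $\gamma x - x$ (because $\Gamma$ acts trivially on $X^*(S)$) and annihilates torsion (because $X^*(S)$ is torsion-free), so it factors through $\overline{X} = X^*(T_{sp})$; dually, $f$ factors through $T_{sp}$. Taking $f$ to be the inclusion of an arbitrary split subtorus shows that $T_{sp}$ contains all of them, hence is maximal. Uniqueness is formal: two subtori of $G$ with the stated property each factor through the other, so they are mutually closed-immersed subgroup schemes of $G$ and therefore equal.

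For the dimension assertion, take $G = T$ a torus, so that $X = X(G)$. Then $\dim T_{sp} = \operatorname{rank}_{\Z} \overline{X} = \operatorname{rank}_{\Z} X_\Gamma$. Writing $H$ for the finite quotient of $\Gamma$ through which it acts and tensoring with $\Q$, semisimplicity of $\Q[H]$ gives $\dim_{\Q}(X \otimes \Q)_H = \dim_{\Q}(X \otimes \Q)^H$; since both invariants and coinvariants commute with the flat extension $\Z \to \Q$ (note $X^\Gamma$ is saturated in $X$), this yields $\operatorname{rank}_{\Z} X_\Gamma = \operatorname{rank}_{\Z} X^\Gamma = \operatorname{rank}_{\Z} X(G)^{Gal(F^s/F)}$, as required. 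I expect the only point requiring genuine care to be the correct use of the torus--lattice dictionary --- in particular the fact that a surjection of $\Gamma$-lattices with free target corresponds to a closed immersion of tori; the rest is routine.
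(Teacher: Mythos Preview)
Your proof is correct and follows the paper's overall strategy: reduce to the torus case via Proposition~\ref{prop-maxtor}, then argue on the level of character lattices. The only difference is in the packaging of the lattice quotient and the dimension count. The paper constructs $T_{sp}$ as the torus with character module $X(G)/\ker(tr)$, where $tr\colon X(G)\to X(G)^{Gal(F'/F)}$ is the trace map over a splitting field $F'$; you instead take the coinvariants $X(G)_\Gamma$ modulo torsion. These two quotients coincide (an element lies in $\ker(tr)$ iff its image in the coinvariants is torsion, since in $X(G)_\Gamma$ the trace becomes multiplication by $|Gal(F'/F)|$), so the same $T_{sp}$ results. For the dimension formula, the paper simply notes that $tr$ restricted to the invariants is multiplication by $[F':F]$, whence $\mathrm{rank}_{\Z}(\mathrm{im}\,tr)=\mathrm{rank}_{\Z}(X(G)^\Gamma)$; your appeal to semisimplicity of $\Q[H]$ is an equally standard route to the same equality. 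Neither approach buys anything the other does not --- they are two idiomatic descriptions of the same object.
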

\begin{proof}
By Proposition \ref{prop-maxtor}, we may assume that $G$ is a
torus. Let $F'$ be a splitting field of $G$, denote by $X(G)$ the
character module of $G$, and consider the trace map
$$tr:X(G)\rightarrow X(G)^{Gal(F'/F)}:x\mapsto \sum_{\gamma \in
Gal(F'/F)}\gamma \cdot x$$
 By the duality between tori
and their character modules, it is clear that $T_{sp}$ is the
torus corresponding to the character module $X(G)/ker(tr)$
(cf.~\cite[1.3]{N-X}). Since the restriction of $tr$ to
$$X(G)^{Gal(F'/F)}=X(G)^{Gal(F^s/F)}$$ is multiplication by $[F':F]$,
we see that
$$\mathrm{dim}(T_{sp})=\mathrm{rank}_{\Z}(X(G)/ker(tr))=\mathrm{rank}_{\Z}(X(G)^{Gal(F^s/F)})$$
\end{proof}
\begin{definition}
With the notations of Proposition \ref{prop-maxsplittor}, we call
$T_{sp}$ the maximal split subtorus of $G$.
\end{definition}
An argument similar to the proof of Proposition \ref{prop-redzero}
shows that the maximal split subtorus of $G/T_{sp}$ is trivial.
\subsection{Toric rank of a semi-abelian variety}
\begin{definition}
Let $G$ be a semi-abelian $K$-variety, with N\'eron $lft$-model
$\mathcal{G}$. We define the toric rank $t(G)$ of $G$ by
$$t(G)=\rho(\mathcal{G}_s^o)$$
\end{definition}

\begin{lemma}\label{lemma-tor}
Let
$$f:\mathcal{H}\rightarrow \mathcal{G}$$ be a morphism of smooth
group $R$-schemes such that $f_K$ is injective. Then
$$\rho(\mathcal{G}_s^o)\geq \rho(\mathcal{H}_s^o)$$
\end{lemma}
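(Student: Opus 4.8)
The plan is to reduce the comparison of reductive ranks to a statement about maximal subtori of the identity components of the special fibers, using the Néron property to lift morphisms. First I would let $T_{\mathcal{H}}$ denote the maximal subtorus of $\mathcal{H}_s^o$, so that $\rho(\mathcal{H}_s^o) = \dim T_{\mathcal{H}}$. The idea is to transport $T_{\mathcal{H}}$ across $f$ and show that its image lands inside a subtorus of $\mathcal{G}_s^o$ of at least the same dimension; combined with Proposition \ref{prop-maxtor}(1), which gives a well-defined maximal subtorus of $\mathcal{G}_s^o$, this yields the inequality. The subtlety is that $f_K$ injective does \emph{not} immediately give that $f_s$ (or its restriction to identity components) is injective on the special fiber, so one cannot simply say ``$f_s(T_{\mathcal{H}})$ is a subtorus of the same dimension''. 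The way around this is to argue that the kernel of $f_s$ restricted to $T_{\mathcal{H}}$ must be finite, hence $f_s(T_{\mathcal{H}})$ is a torus of dimension $\dim T_{\mathcal{H}} = \rho(\mathcal{H}_s^o)$, and it sits inside $\mathcal{G}_s^o$, whence $\rho(\mathcal{G}_s^o) \geq \rho(\mathcal{H}_s^o)$.

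To see that $\ker(f_s|_{T_{\mathcal{H}}})$ is finite, I would pass to the generic-fiber picture. Since $\mathcal{H}$ and $\mathcal{G}$ are smooth and $f_K$ is injective (a closed immersion of smooth $K$-group schemes, or at least a monomorphism), the scheme-theoretic kernel $\ker(f)$ is a group $R$-scheme with trivial generic fiber; being flat over $R$ would force it to be trivial, but in general $\ker(f)$ need not be flat. Nonetheless, what matters is that $\ker(f)$ is quasi-finite over $R$ in a neighborhood of the generic point, and more importantly that the restriction of $f_s$ to any subtorus has finite kernel. Here is the cleaner route: a subtorus of $\mathcal{H}_s^o$ on which $f_s$ has positive-dimensional kernel would contain a nontrivial subtorus $T'$ with $f_s|_{T'}$ trivial; I would then lift $T'$ (after replacing $R$ by a suitable extension if necessary, or using smoothness of $\mathcal{H}$ over $R$ and the structure of tori) to produce a contradiction with injectivity of $f_K$ via a specialization/rigidity argument for morphisms out of tori. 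Alternatively — and this is probably the slickest — one uses that $f$, being a morphism of smooth group schemes with $f_K$ a monomorphism, has finite kernel over the whole of $R$ because the kernel's special fiber cannot jump in dimension (upper semicontinuity of fiber dimension together with the scheme being a group scheme over a DVR), so $\ker(f_s)$ is finite, and in particular $f_s|_{T_{\mathcal{H}}}$ has finite kernel.

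The main obstacle I anticipate is exactly the control of $\ker(f_s)$: injectivity on the generic fiber is a priori much weaker than injectivity on the special fiber for group schemes over a DVR, so one must invoke the right finiteness statement. I expect the cleanest formulation is: since $f_K$ is a monomorphism of finite-type $K$-group schemes it is a closed immersion, so $\ker(f)_K$ is trivial; then $\ker(f)$ is an $R$-group scheme of finite type with trivial generic fiber, and every fiber of a finite-type $R$-group scheme over a DVR has dimension that is upper semicontinuous, so $\dim \ker(f)_s \leq \dim \ker(f)_K = 0$, i.e. $\ker(f)_s$ is finite. Once this is in hand, $f_s$ has finite kernel, so it maps the maximal subtorus $T_{\mathcal{H}}$ of $\mathcal{H}_s^o$ isogenously onto a subtorus of $\mathcal{G}_s^o$ of the same dimension, and $f_s(\mathcal{H}_s^o) \subseteq \mathcal{G}_s^o$ since identity components map to identity components; combining with the definition of $\rho$ and Proposition \ref{prop-maxtor} gives $\rho(\mathcal{G}_s^o) \geq \dim T_{\mathcal{H}} = \rho(\mathcal{H}_s^o)$, as desired.
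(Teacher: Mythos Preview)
Your ``slickest'' route has a genuine error: upper semicontinuity of fibre dimension for a morphism $X\to\Spec R$ says that the special fibre has dimension \emph{at least} that of the generic fibre, not at most. So from $\dim\ker(f)_K=0$ you cannot conclude $\dim\ker(f)_s=0$. A concrete counterexample already appears with $\mathcal{H}=\mathcal{G}=\mathbb{G}_{a,R}$ and $f(x)=\pi x$ for a uniformizer $\pi$: here $f_K$ is an isomorphism while $f_s$ is the zero map, so $\ker(f_s)=\mathbb{G}_{a,k}$ is one-dimensional. Thus the claim ``$\ker(f_s)$ is finite'' is simply false in the generality of the lemma, and your final paragraph collapses.

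Your earlier alternative, lifting a putative subtorus $T'\subset\ker(f_s)$ to the integral level and contradicting injectivity of $f_K$, is closer to something workable, but it needs the infinitesimal lifting property for tori (as in \cite[IX.3.6]{sga3.2}) and only produces a morphism of \emph{formal} group schemes $\widehat{\mathbb{G}}_{m,R}\to\widehat{\mathcal{H}}^o$; passing back to the algebraic category is not automatic when $\mathcal{H}^o$ is not affine. The paper avoids all of this by a short $\ell$-torsion argument: for a prime $\ell\neq p$ one has bijections $_\ell\mathcal{H}^o(R)\cong {_\ell}\mathcal{H}_s^o(k)$ and $_\ell\mathcal{G}^o(R)\cong {_\ell}\mathcal{G}_s^o(k)$ by smoothness and Hensel (\cite[7.3.3]{neron}), while $_\ell\mathcal{H}^o(R)\hookrightarrow {_\ell}\mathcal{G}^o(R)$ follows directly from injectivity of $f_K$ on $K$-points. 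Chasing the square gives that $_\ell T_{\mathcal{H}}(k)\to {_\ell}\mathcal{G}_s^o(k)$ is injective; this map factors through the maximal subtorus $S$ of $\mathcal{G}_s^o$ by Proposition~\ref{prop-maxtor}(2), and counting $\ell$-torsion points of split tori over the separably closed field $k$ forces $\dim S\geq\dim T_{\mathcal{H}}$. This bypasses any control of $\ker(f_s)$ altogether.
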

\begin{proof}
Denote by $S$ and $T$ the maximal subtori of $\mathcal{G}_s^o$,
resp. $\mathcal{H}_s^o$.
 We consider the commutative
diagram
$$\begin{CD}
_\ell T(k)@>(*)>> _\ell \mathcal{H}_s^o(k) @>>> _\ell
\mathcal{G}^o_s(k)
\\ @. @AAA @AAA
\\ @. _\ell \mathcal{H}^o(R) @>(*)>>
_\ell \mathcal{G}^o(R)
\end{CD}$$
The vertical arrows are bijections, by \cite[7.3.3]{neron}, and
the arrows marked by (*) are injections. It follows that
$$_\ell T(k)\rightarrow\,  _\ell \mathcal{G}_s^o(k)$$ is injective. The morphism $T\rightarrow \mathcal{G}^o_s$ factors through $S$,
by Proposition \ref{prop-maxtor}(2). Injectivity of
$$_\ell T(k)\rightarrow\, _\ell S(k)$$ implies that
$\mathrm{dim}(S)\geq \mathrm{dim}(T)$, since the number of
elements of $_\ell\mathbb{G}^d_{m}(k)$ equals $\ell^d$, for every
integer $d\geq 0$.
\end{proof}

\begin{prop}\label{prop-torrank}
Let $G$ be a semi-abelian $K$-variety. For every finite extension
$K'$ of $K$, we have
$$t(G)\leq t(G\times_K K')$$
with equality if $G$ has semi-abelian reduction.
\end{prop}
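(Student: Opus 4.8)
The plan is to deduce both statements from the behaviour of N\'eron $lft$-models and their maximal subtori under base change, using Lemma \ref{lemma-tor} as the key input. Let $G$ be a semi-abelian $K$-variety with N\'eron $lft$-model $\mathcal{G}$, and let $\mathcal{G}'$ denote the N\'eron $lft$-model of $G'=G\times_K K'$.

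\medskip\noindent\textbf{The inequality $t(G)\leq t(G\times_K K')$.}
First I would invoke the universal property of the N\'eron $lft$-model: the canonical morphism $G\to G'$ over $\Spec K\to \Spec K'$ induces, by base change $\mathcal{G}\times_R R'$ (a smooth group $R'$-scheme) and the N\'eron mapping property, a morphism of smooth group $R'$-schemes
$$\varphi:\mathcal{G}\times_R R'\longrightarrow \mathcal{G}'$$
whose generic fiber is the identity on $G'$, hence injective. Passing to identity components of special fibers and applying Lemma \ref{lemma-tor}, we get $\rho((\mathcal{G}')^o_s)\geq \rho((\mathcal{G}\times_R R')^o_s)$. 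It remains to identify the right-hand side with $t(G)=\rho(\mathcal{G}^o_s)$. The special fiber of $\mathcal{G}\times_R R'$ is $\mathcal{G}_s\times_k k'$, and $k'/k$ is a purely inseparable extension (as recorded in Section \ref{subsec-rigunif}); forming identity components commutes with this base change, and the dimension of the maximal subtorus is insensitive to purely inseparable (indeed arbitrary field) extension by the remark following the definition of maximal subtorus. Hence $\rho((\mathcal{G}\times_R R')^o_s)=\rho(\mathcal{G}^o_s\times_k k')=t(G)$, giving $t(G)\leq t(G')$.

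\medskip\noindent\textbf{Equality when $G$ has semi-abelian reduction.}
Here I would argue that $\varphi$ induces an isomorphism on the toric parts. When $\mathcal{G}^o_s$ is semi-abelian, say an extension of an abelian variety $\mathcal{B}$ by a torus $\mathcal{T}$ of dimension $t(G)$, one knows (Chevalley, Chai--Yu, or directly from \cite{neron} ch.~7 together with the description of N\'eron models of semi-abelian varieties) that after base change $\mathcal{G}\times_R R'$ is already the N\'eron $lft$-model of $G'$ on identity components, or at least that $\varphi$ is an isomorphism onto $(\mathcal{G}')^o_s$ up to a finite component group and in particular preserves the semi-abelian identity component; the toric rank of a semi-abelian variety does not change under base change because its torus part is already ``as large as possible''. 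More concretely: by part (1), $t(G')\geq t(G)$; for the reverse inequality, the torus $\mathcal{T}'\subseteq (\mathcal{G}')^o_s$ has dimension $t(G')$, and since $G'$ again has semi-abelian reduction (semi-abelian reduction is preserved by base change) the quotient $(\mathcal{G}')^o_s/\mathcal{T}'$ is an abelian variety; pulling back along $\varphi$ and using that $\mathcal{G}^o_s$ maps to this abelian variety with the torus $\mathcal{T}$ landing in $\mathcal{T}'$, one sees $\dim\mathcal{T}'\le \dim\mathcal{T}$ by a rank count on $\ell$-torsion exactly as in Lemma \ref{lemma-tor}, now with the roles reversed. Combining gives $t(G)=t(G')$.

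\medskip\noindent\textbf{Main obstacle.}
The delicate point is the equality case: it requires knowing that the map $\mathcal{G}\times_R R'\to \mathcal{G}'$ is ``close to an isomorphism'' on identity components when there is semi-abelian reduction --- more precisely, that it does not enlarge the toric part. The clean way is to use that in the semi-abelian reduction case the formation of the N\'eron $lft$-model (restricted to identity components) commutes with base change, which is a standard but non-trivial fact; alternatively one runs the $\ell$-torsion counting argument of Lemma \ref{lemma-tor} in both directions, for which one needs that $(\mathcal{G}')^o_s/\mathcal{T}'$ has no torus quotient, i.e.\ that $G'$ too has semi-abelian reduction --- and this is exactly where the hypothesis is used. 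I expect the write-up to reduce entirely to these two observations plus Lemma \ref{lemma-tor}.
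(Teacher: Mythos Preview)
Your proof of the inequality is exactly the paper's: apply Lemma~\ref{lemma-tor} to the canonical morphism $\mathcal{G}\times_R R'\to\mathcal{G}'$ furnished by the N\'eron mapping property. Your remark that $\rho(\mathcal{G}^o_s\times_k k')=\rho(\mathcal{G}^o_s)$ is a helpful clarification that the paper leaves implicit.

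For the equality case the paper is considerably more direct than your write-up. It simply quotes \cite[IX.3.1(e)]{sga7a}: when $G$ has semi-abelian reduction, the map
\[
\mathcal{G}^o_s\times_k k'\longrightarrow (\mathcal{G}')^o_s
\]
is an \emph{isomorphism}, and equality of toric ranks is immediate. You do mention this fact (``formation of the N\'eron $lft$-model on identity components commutes with base change''), so your proposal is correct; but your ``more concretely'' alternative is both unnecessary and, as written, not quite complete --- the phrase ``with the roles reversed'' suggests applying Lemma~\ref{lemma-tor} to a map $\mathcal{G}'\to\mathcal{G}\times_R R'$ that you have not produced. If you want to avoid citing SGA~7 and argue via $\ell$-torsion, the clean way is: once you know $G'$ also has semi-abelian reduction, both $\mathcal{G}^o_s\times_k k'$ and $(\mathcal{G}')^o_s$ are semi-abelian of the same dimension $g$, the map on $\ell$-torsion is injective (by the argument inside the proof of Lemma~\ref{lemma-tor}), and comparing the orders $\ell^{2g-t(G)}$ and $\ell^{2g-t(G')}$ gives $t(G')\le t(G)$. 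But the SGA~7 citation is shorter.
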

\begin{proof}
We denote by $R'$ the normalization of $R$ in $K'$, by $k'$ its
residue field, and by $\mathcal{G}$ and $\mathcal{G}'$ the N\'eron
$lft$-models of $G$ and $G'$. We consider the unique $R'$-morphism
$$\mathcal{G}\times_R R'\rightarrow \mathcal{G}'$$ extending the
canonical isomorphism between the generic fibers. Applying Lemma
\ref{lemma-tor} to this morphism, we see that
$$t(G)\leq t(G\times_K K')$$
If $G$ has semi-abelian reduction, then
$$\mathcal{G}_s^o\times_k k'\rightarrow (\mathcal{G}')^o_s$$
is an isomorphism \cite[IX.3.1(e)]{sga7a}, so that
$$t(G)= t(G\times_K K')$$
\end{proof}
\begin{definition}
Let $G$ be a semi-abelian $K$-variety, and let $L$ be a finite
separable extension of $K$ such that $G\times_K L$ has
semi-abelian reduction. We define the potential toric rank
$t_{pot}(G)$ of $G$ by
$$t_{pot}(G)=t(G\times_K L)$$
\end{definition}
By Proposition \ref{prop-torrank}, this definition does not depend
on $L$, and we have
$$t_{pot}(G)=\max\{t(G\times_K K')\,|\,K'\mbox{ a finite extension
of }K\}$$ We say that $G$ has \emph{purely multiplicative
reduction} if $t_{pot}(G)$ is equal to the dimension of $G$, i.e.,
if the identity component of the special fiber of the N\'eron
model of $G\times_K L$ is a torus.

\begin{remark}
The existence of $L$ (i.e., the potential semi-abelian reduction
of $G$) is well-known. It is easily deduced from the semi-abelian
reduction theorem for abelian varieties \cite[IX.3.6]{sga7a}; see
the implication $(2)\Rightarrow (1)$ in Proposition
\ref{prop-semiabred} below.
\end{remark}

\begin{definition}
Let $G$ be a semi-abelian $K$-variety, with toric part $G_{tor}$
and abelian part $G_{ab}$. We say that $G$ has good reduction if
$G_{tor}$ and $G_{ab}$ have good reduction. We say that $G$ has
potential good reduction if there exists a finite separable
extension $K'$ of $K$ such that $G\times_K K'$ has good reduction.
\end{definition}
Note that every algebraic $K$-torus has potential good reduction.

\begin{prop}\label{prop-uniftor}
Let $G$ be a semi-abelian $K$-variety with potential good
reduction. If we denote by $T_{sp}$ the maximal split subtorus of
$G$, then
\begin{eqnarray*}t(G)&=&\mathrm{dim}(T_{sp})
\\ t_{pot}(G)&=&\rho(G)
\end{eqnarray*}
\end{prop}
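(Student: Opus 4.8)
The plan is to reduce both equalities to the structure of $G$ as an extension $0\to T\to G\to B\to 0$, where $T$ is a $K$-torus and $B$ is an abelian $K$-variety with potential good reduction (here $B$ is the abelian part $G_{ab}$ and $T$ the toric part $G_{tor}$). Since $G$ itself has potential good reduction, $B$ has good reduction after a finite separable extension; the key point is to compare the N\'eron $lft$-model of $G$ with those of $T$ and $B$, using the fact that after base change to a splitting field $L'$ of $T$ over which $B$ has good reduction, $G\times_K L'$ has a N\'eron model whose identity component of the special fiber is an extension of an abelian variety by a torus of dimension $\mathrm{dim}(T)$.

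First I would prove the formula $t_{pot}(G)=\rho(G)$. By definition $t_{pot}(G)=t(G\times_K L)$ for any finite separable $L$ over which $G$ acquires semi-abelian reduction; choosing $L$ large enough that $T$ splits and $B$ has good reduction, $G\times_K L$ has semi-abelian reduction, and the identity component of the special fiber of its N\'eron model is an extension of (the good reduction of) $B\times_K L$ by the split torus $T\times_K L$. The reductive rank of this identity component is the dimension of its maximal subtorus; since $B$ has good reduction, its contribution to the reductive rank is zero (the identity component of the N\'eron model of $B\times_K L$ is an abelian variety, which has no subtori), so the reductive rank equals $\mathrm{dim}(T)=\mathrm{dim}(G_{tor})$. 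On the other hand $\rho(G)$, the reductive rank of $G$ as an algebraic $K$-group, also equals $\mathrm{dim}(T)$ because by Proposition \ref{prop-maxtor}(2) any subtorus of $G$ maps to a subtorus of $B$, which is trivial since $B$ is an abelian variety, so $T$ is itself the maximal subtorus of $G$. Hence $t_{pot}(G)=\mathrm{dim}(T)=\rho(G)$.

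Next I would prove $t(G)=\mathrm{dim}(T_{sp})$, where $T_{sp}$ is the maximal split subtorus of $G$ (which by the same argument as above equals the maximal split subtorus of $T$, since any split torus mapping to $G$ lands in $T$ by Proposition \ref{prop-maxsplittor} combined with Proposition \ref{prop-maxtor}(2)). For the inequality $t(G)\geq \mathrm{dim}(T_{sp})$: the inclusion $T_{sp}\hookrightarrow G$ extends to a morphism of N\'eron $lft$-models $\mathcal{T}_{sp}\to\mathcal{G}$ (or one can use the canonical morphism from the N\'eron model of the split torus); since $T_{sp}$ is split with good reduction, the identity component of the special fiber of its N\'eron model is a split torus of dimension $\mathrm{dim}(T_{sp})$, and Lemma \ref{lemma-tor} applied to this morphism gives $t(G)=\rho(\mathcal{G}_s^o)\geq \mathrm{dim}(T_{sp})$. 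For the reverse inequality, the maximal subtorus $S$ of $\mathcal{G}_s^o$ has dimension $t(G)$; I would argue, using the uniqueness of the N\'eron model and the behaviour of the component group and torsion points (as in the proof of Lemma \ref{lemma-tor}, via the bijections $_\ell\mathcal{G}^o(R)\cong {}_\ell\mathcal{G}_s^o(k)$), that $S$ is \emph{split} over $k$ — here one uses that $k$ is separably closed, so every torus over $k$ is split, but more essentially that the $\ell$-torsion of $\mathcal{G}_s^o$ coming from $K$-points forces $S$ to come from a split subtorus of $G$ itself. Concretely, I expect to show that $\dim S$ is bounded above by the rank of $X(T)^{Gal(K^s/K)}$, which is $\mathrm{dim}(T_{sp})$ by Proposition \ref{prop-maxsplittor}.

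The main obstacle will be this last point: bounding $t(G)=\dim S$ above by $\dim T_{sp}$, i.e., showing that the reductive rank of the special fiber of the N\'eron model cannot exceed the dimension of the maximal split subtorus of $G$. The subtlety is that a priori $S$ could be larger than what is "visible" from split subtori of $G$ over $K$; one must exploit that $G$ has potential good reduction (so $B$ contributes nothing to $\mathcal{G}_s^o$ in the limit) together with a careful analysis of how the torus part $T$ of $G$ degenerates — specifically, that the toric part of $\mathcal{G}_s^o$ is controlled by the Galois-invariants of the character module $X(T)$, which is where the identification with $T_{sp}$ comes from. I would handle this either by a direct comparison of N\'eron models of $G$ and $T$ (the quotient map $G\to B$ and the fact that N\'eron models are compatible with such extensions up to controlled discrepancy), or by invoking the description of $\mathcal{G}_s^o$ for a semi-abelian variety with a torus part via its character module and the inertia action, reducing to the torus case already treated in Proposition \ref{prop-maxsplittor}.
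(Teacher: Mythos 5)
The first equality $t_{pot}(G)=\rho(G)$ and the lower bound $t(G)\geq\dim(T_{sp})$ are handled correctly (the paper instead deduces the second equality from the first by base change to a splitting field, but your direct argument via the split exact sequence of N\'eron models over $L$ is equally valid). The problem is the reverse inequality $t(G)\leq\dim(T_{sp})$, which you rightly flag as ``the main obstacle'' but do not actually prove. Both of your suggested routes are too vague to evaluate, and the $\ell$-torsion idea you sketch does not work as stated: the injection $_\ell S(k)\hookrightarrow{}_\ell\mathcal{G}^o(R)\subset{}_\ell G(K)$ gives a bound on $\dim S$ by the $\F_\ell$-rank of $_\ell G(K)$, but $_\ell G(K)$ can receive a large contribution from the abelian part $G_{ab}$ (for instance when $G_{ab}$ has good reduction and all $\ell$-torsion is rational), and nothing in the torsion count distinguishes which $\ell$-torsion points ``feed'' the torus $S$ versus the abelian quotient of $\mathcal{G}_s^o$. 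That separation is exactly the geometric content you would need.

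The paper's proof of the upper bound is genuinely different and cannot be replaced by torsion counting: it takes a morphism $\mathbb{G}_{m,k}\to\mathcal{G}_s^o$, lifts it (uniquely, by the infinitesimal lifting property for tori, SGA~3 IX.3.6) to a morphism of formal group $R$-schemes $\widehat{\mathbb{G}}_{m,R}\to\widehat{\mathcal{G}}^o$, and then passes to the rigid generic fiber $\mathbb{G}_{m,K}^{rig}\to(\widehat{\mathcal{G}}^o)_\eta$. Composing with the projection to $(\widehat{\mathcal{G}}^o_{ab})_\eta$ and invoking the universal property of the formal N\'eron model, one sees this composition extends to a formal morphism whose special fiber must vanish because $t(G_{ab})=0$ (using potential good reduction of $G_{ab}$ and Proposition~\ref{prop-torrank}). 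Hence the rigid image lands in the toric part, and a further application of the same argument (plus the affine torus case, via Anantharaman and SGA~3 IX.7.3) pushes it into $T_{sp}^{an}$. This formal/rigid lifting mechanism is precisely the missing step in your proposal, and your appeals to ``direct comparison of N\'eron models of $G$ and $T$'' or to Proposition~\ref{prop-maxsplittor} do not supply a substitute for it.
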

\begin{proof}
The second equality follows from the first, passing to a splitting
field of the maximal subtorus of $G$. So let us prove the first
equality. We denote by $\mathcal{G}$ the N\'eron $lft$-model of
$G$.

\textit{Case 1: $T_{sp}$ is trivial, and $G$ is a torus.} By
\cite[2.3.1]{ananth}, $\mathcal{G}^o$ is affine. Consider a
morphism
$$f_s:\mathbb{G}_{m,k}\rightarrow \mathcal{G}_s^o$$
By \cite[IX.7.3]{sga3.2}, the morphism $f_s$ lifts uniquely to a
morphism of group $R$-schemes $f:\mathbb{G}_{m,R}\rightarrow
\mathcal{G}^o$. Passing to the generic fiber, we find a morphism
of algebraic $K$-groups $f_K:\mathbb{G}_{m,K}\rightarrow G$. Since
$T_{sp}$ is a point, $f_K$, and hence $f_s$, must be trivial, so
that $t(G)=0$.

%
\textit{Case 2: $T_{sp}$ is trivial.} We have to show that
$t(G)=0$. We denote by
$$0\rightarrow G_{tor}\rightarrow G\rightarrow G_{ab}\rightarrow
0$$ the Chevalley decomposition of $G$, and by
$$\mathcal{G}_{tor}^o\rightarrow \mathcal{G}^o\rightarrow
\mathcal{G}_{ab}^o$$ the induced sequence on identity components
of N\'eron $lft$-models. Taking formal completions and passing to
the generic fiber, we find a sequence of rigid $K$-groups
$$(\widehat{\mathcal{G}}^o_{tor})_\eta\rightarrow (\widehat{\mathcal{G}}^o)_\eta\rightarrow
(\widehat{\mathcal{G}}^o_{ab})_\eta$$

We denote by $\mathbb{G}^{rig}_{m,K}$ the generic fiber of the
formal group $R$-scheme $\widehat{\mathbb{G}}_{m,R}$. It is a
rigid $K$-group, and it coincides with the unit circle in the
rigid analytification $(\mathbb{G}_{m,K})^{an}$.

Assume that there exists a non-trivial morphism of algebraic
$k$-groups
$$f_s:\mathbb{G}_{m,k}\rightarrow \mathcal{G}_s^o$$
By the infinitesimal lifting property for tori
\cite[IX.3.6]{sga3.2} we know that this morphism lifts uniquely to
a morphism of formal group $R$-schemes
$f:\widehat{\mathbb{G}}_{m,R}\rightarrow \widehat{\mathcal{G}}^o$.
Passing to the generic fiber, we find a morphism of rigid
$K$-groups $f_\eta:\mathbb{G}_{m,K}^{rig}\rightarrow
(\widehat{\mathcal{G}}^o)_\eta$.

We consider the composed morphism
$g_\eta:\mathbb{G}_{m,K}^{rig}\rightarrow
(\widehat{\mathcal{G}}^o_{ab})_\eta$. By the universal property of
the formal N\'eron model \cite[1.1]{formner}, $g_\eta$ extends
uniquely to a morphism of formal group $R$-schemes
$g:\widehat{\mathbb{G}}_{m,R}\rightarrow
\widehat{\mathcal{G}}^o_{ab}$. Passing to the special fiber, we
obtain a morphism of algebraic $k$-groups
$g_s:\mathbb{G}_{m,k}\rightarrow (\mathcal{G}_{ab})^o_s$. By
Proposition \ref{prop-torrank}, the fact that $G_{ab}$ has
potential good reduction implies that $t(G_{ab})=0$. Hence, $g_s$
is trivial. It follows from \cite[IX.3.5]{sga3.2} that $g$ is
trivial, so that the image of $f_\eta$ is contained in
$(\widehat{\mathcal{G}}_{tor}^o)_\eta$. But $t(G_{tor})=0$ by Case
1, so repeating the above argument we see that $f_\eta$ is
trivial, so that $f_s$ is trivial. Hence, $t(G)=0$.

 \textit{Case 3: general case.}
We denote by $\mathcal{T}$ the N\'eron $lft$-model of $T_{sp}$. We
consider the unique $R$-morphism
$$f:\mathcal{T}^o\rightarrow \mathcal{G}^o$$
extending $T_{sp}\rightarrow G$. By Lemma \ref{lemma-tor}, we have
$$t(G)\geq \mathrm{dim}(T_{sp})$$
It remains to prove the converse inequality.

If we put $H=G/T_{sp}$, then $t(H)=0$ by Case 2. Copying the proof
of Case 2, we see that the image of any morphism of rigid
$K$-groups $\mathbb{G}_{m,K}^{rig}\rightarrow G^{an}$ is contained
in $T^{an}_{sp}$, and that any morphism of algebraic $k$-groups
$\mathbb{G}_{m,k}\rightarrow \mathcal{G}_s^o$ lifts to a morphism
$\mathbb{G}_{m,k}\rightarrow \mathcal{T}_s^o$. Hence,
$$t(G)\leq \mathrm{dim}(T_{sp})$$
\end{proof}

\begin{prop}\label{prop-uniftor2}
Let $A$ be an abelian $K$-variety. We adopt the notation of
Section \ref{subsec-rigunif}. We consider $M$ as a discrete
$I$-module, and we denote by $T_{sp}$ the maximal split subtorus
of $T$. Then we have
$$\begin{array}{rcccl}
\mathrm{rank}_{\Z}(M)&=&\mathrm{dim}(T)&=&t_{pot}(A)
\\ \mathrm{rank}_{\Z}(M^I)&=&\mathrm{dim}(T_{sp})&=&t(A)
\end{array}$$
\end{prop}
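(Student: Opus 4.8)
The plan is to reduce everything to Proposition \ref{prop-uniftor}, applied to the semi-abelian $K$-variety $E$ occurring in the rigid uniformization, together with a comparison of the toric ranks of $A$ and $E$. The equality $\mathrm{rank}_{\Z}(M)=\dim(T)$ in the first row is part of the definition of a rigid uniformization, so only the remaining four equalities require an argument. First I would record that $E$ has potential good reduction: it is semi-abelian, being an extension of the abelian variety $B$ by the torus $T$, and both $B$ and $T$ have potential good reduction. Since $B$ carries no non-trivial subtorus, every subtorus of $E$ is contained in $T$; hence $T$ is the maximal subtorus of $E$ and $T_{sp}$ is the maximal split subtorus of $E$, so $\rho(E)=\dim(T)$. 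Proposition \ref{prop-uniftor} then gives $t(E)=\dim(T_{sp})$ and $t_{pot}(E)=\rho(E)=\dim(T)$.

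The key step is the identity $t(A)=t(E)$ (and its analogue after base change). The uniformization yields a short exact sequence of abelian sheaves $0\to\mathscr{F}_M\to\mathscr{F}_E\to\mathscr{F}_A\to 0$ on $(\Sp K)_{sm}$, where $\mathscr{F}_M$ makes sense because $M$, being \'etale-locally constant, is a smooth commutative algebraic $K$-group. Applying $j_*$ and using $j_*\mathscr{F}_E=\widehat{\mathcal{E}}$ and $j_*\mathscr{F}_A=\widehat{\mathcal{A}}$ (by \cite[6.2]{formner}; here $\mathcal{E}$ is the N\'eron $lft$-model of $E$), one gets a morphism of formal group $R$-schemes $\widehat{\mathcal{E}}\to\widehat{\mathcal{A}}$ whose kernel is the formal N\'eron model $j_*\mathscr{F}_M$ of $M$. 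Since $M$ is \'etale over $K$ of dimension $0$, its formal N\'eron model is \'etale over $R$, so its special fiber has trivial identity component; thus $\mathcal{E}^o_s\to\mathcal{A}^o_s$ has finite kernel, and since $\dim\mathcal{E}^o_s=\dim E=\dim A=\dim\mathcal{A}^o_s$ with $\mathcal{A}^o_s$ connected, it is surjective, hence an isogeny, whence $t(A)=\rho(\mathcal{A}^o_s)=\rho(\mathcal{E}^o_s)=t(E)$. (This comparison can alternatively be extracted from \cite{B-X}.) The same argument over $L$, where $E\times_K L$ has good reduction, gives $t_{pot}(A)=t(A\times_K L)=t(E\times_K L)=t_{pot}(E)=\dim(T)$, using Proposition \ref{prop-torrank}. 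So the first row reads $\mathrm{rank}_{\Z}(M)=\dim(T)=t_{pot}(A)$, and combining $t(A)=t(E)$ with $t(E)=\dim(T_{sp})$ yields the last equality $t(A)=\dim(T_{sp})$ of the second row.

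It remains to prove $\mathrm{rank}_{\Z}(M^I)=\dim(T_{sp})$. By Proposition \ref{prop-maxsplittor} applied to $T$ we have $\dim(T_{sp})=\mathrm{rank}_{\Z}(X(T)^I)$, so it suffices to show $\mathrm{rank}_{\Z}(M^I)=\mathrm{rank}_{\Z}(X(T)^I)$. For this I would invoke the period (monodromy) pairing $M\times X(T)\to\Z$ attached to the uniformization \cite{B-X}: it is $I$-equivariant, since the period homomorphism is Galois-equivariant and the valuation Galois-invariant, and it is non-degenerate, because after base change to $L$ it is the monodromy pairing of the semi-abelian reduction of $A\times_K L$, which is non-degenerate by \cite[Exp.~IX]{sga7a}, and the pairing is unchanged by that base change. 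Hence it induces an $I$-equivariant isomorphism $M\otimes_{\Z}\Q\cong\mathrm{Hom}_{\Q}(X(T)\otimes_{\Z}\Q,\Q)$. As $M$ and $X(T)$ split over $L$, the group $I$ acts on them through the finite quotient $Gal(L/K)$, so over $\Q$ invariants and coinvariants have equal dimension; therefore $\mathrm{rank}_{\Z}(M^I)=\dim_{\Q}(M\otimes\Q)^I=\dim_{\Q}(X(T)\otimes\Q)_I=\dim_{\Q}(X(T)\otimes\Q)^I=\mathrm{rank}_{\Z}(X(T)^I)$, as required.

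The main obstacle is the comparison $t(A)=t(E)$: one must verify that the kernel of $\widehat{\mathcal{E}}\to\widehat{\mathcal{A}}$ really is the (\'etale) formal N\'eron model of the lattice $M$, so that it contributes nothing to identity components of special fibers and the induced map $\mathcal{E}^o_s\to\mathcal{A}^o_s$ is genuinely an isogeny. A secondary delicate point is pinning down the non-degenerate, $I$-equivariant period pairing between $M$ and $X(T)$ within the rigid-uniformization formalism of \cite{B-X}; once this is available, the linear algebra relating $M^I$ and $X(T)^I$ is routine.
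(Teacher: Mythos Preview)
Your argument is correct, but it diverges from the paper's at the step $\mathrm{rank}_{\Z}(M^I)=\dim(T_{sp})$. The paper does not invoke the monodromy pairing at all; instead it stays inside the component-group formalism of \cite{B-X}: from the exact sequences
\[
0\to \Phi_{T_{sp}}\to \Phi_E\to \Phi_{E/T_{sp}}\quad\text{and}\quad 0\to M^I\to \Phi_E\to \Phi_A
\]
and the finiteness of $\Phi_{E/T_{sp}}$ and $\Phi_A$, one reads off $\mathrm{rank}_{\Z}(M^I)=\mathrm{rank}_{\Z}(\Phi_E)=\mathrm{rank}_{\Z}(\Phi_{T_{sp}})=\dim(T_{sp})$, the last equality coming from the explicit N\'eron $lft$-model of $\mathbb{G}_{m,K}$. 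Your route via the $I$-equivariant non-degenerate pairing $M\times X(T)\to\Z$ and the invariants/coinvariants comparison over $\Q$ is more representation-theoretic and arguably more transparent once the pairing is granted, but it imports an extra ingredient (the pairing and its non-degeneracy) that the paper's proof avoids; the paper's approach, by contrast, reuses exactly the sheaf-theoretic machinery of \cite{B-X} that is needed elsewhere anyway. For $t(A)=t(E)$ the paper simply cites \cite[2.3]{B-X} for the isomorphism $\mathcal{E}^o_s\cong\mathcal{A}^o_s$, which is stronger than the isogeny you establish directly (though your isogeny suffices for equality of reductive ranks). Finally, the paper deduces the first row from the second by passing to $L$ (where $M$ and $T$ split), which is the same idea as your direct base-change argument.
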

\begin{proof}
Since $T$ and $M$ split over $L$, the first statement follows from
the second.
 If we denote by
$\mathcal{E}$ the N\'eron $lft$-model of $E$, then
$\mathcal{E}_s^o$ is isomorphic to $\mathcal{A}_s^o$
\cite[2.3]{B-X}. It follows from Proposition \ref{prop-uniftor}
that
$$t(E)=\mathrm{dim}(T_{sp})$$
so we find
$$t(A)=\mathrm{dim}(T_{sp})$$

Since $T_{sp}$ is split, we know that $R^1 j_*
\mathscr{F}_{T_{sp}}=0$ \cite[4.2]{B-X}.
 By \cite[4.4+9+11+12]{B-X}
we have exact sequences
$$\begin{CD}
0@>>> \Phi_{T_{sp}}@>>> \Phi_E @>>> \Phi_{E/T_{sp}}
\\ 0@>>> M^I @>>> \Phi_E @>>> \Phi_A
\end{CD}$$
But $\Phi_{E/T_{sp}}$ and $\Phi_A$ are finite \cite[10.2.1]{neron}
so that
$$\mathrm{rank}_{\Z}(M^I)=\mathrm{rank}_{\Z}(\Phi_E)=\mathrm{rank}_{\Z}(\Phi_{T_{sp}})=\mathrm{dim}(T_{sp})$$
where the last equality follows from the description of the
N\'eron $lft$-model of $\mathbb{G}_{m,K}$ in \cite[10.1.5]{neron}.
\end{proof}

\section{Semi-abelian reduction of semi-abelian varieties}\label{sec-semiabred}
\begin{prop}\label{prop-semiabred}
Let $G$ be a semi-abelian $K$-variety, with toric part $G_{tor}$
and abelian part $G_{ab}$. The following are equivalent:
\begin{enumerate}
\item $G$ has semi-abelian reduction \item $G_{ab}$ has
semi-abelian reduction, and $G_{tor}$ is split \item the action of
$I$ on $T_\ell G$ is unipotent.
\end{enumerate}
\end{prop}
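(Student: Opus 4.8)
The plan is to prove the two equivalences $(2)\Leftrightarrow(3)$ and $(1)\Leftrightarrow(2)$; together they give the statement. Throughout, fix a prime $\ell\neq p$. Because $k$ is separably closed and $\ell\neq p$, Hensel's lemma shows that all $\ell$-power roots of unity already lie in $K$, so $\Z_{\ell}(1)$ is the trivial $I$-module. Consequently, writing $X_*(G_{tor})$ for the cocharacter module, there is an $I$-equivariant isomorphism $T_{\ell}G_{tor}\cong X_*(G_{tor})\otimes_{\Z}\Z_{\ell}$, on which $I$ acts through the finite quotient $\mathrm{Gal}(L_0/K)$, where $L_0$ is a splitting field of $G_{tor}$. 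Since $G\to G_{ab}$ is a torsor under the torus $G_{tor}$ and such a torsor is trivial over the separably closed field $K^s$, the sequence $0\to G_{tor}(K^s)\to G(K^s)\to G_{ab}(K^s)\to 0$ is exact; as multiplication by $\ell^n$ is surjective on $G_{tor}(K^s)$, the snake lemma and passage to the inverse limit yield a short exact sequence of $I$-modules
$$0\to T_{\ell}G_{tor}\to T_{\ell}G\to T_{\ell}G_{ab}\to 0.$$

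For $(2)\Leftrightarrow(3)$ I would use that unipotence of an $I$-action passes to subobjects and quotients, and that an extension of unipotent $I$-modules is again unipotent (if $(\sigma-1)^{a}$ kills the subobject and $(\sigma-1)^{b}$ the quotient, then $(\sigma-1)^{a+b}$ kills everything). By the displayed exact sequence, $(3)$ (resp.\ $(2)$) is then equivalent to unipotence of the $I$-action on $T_{\ell}G_{tor}$ together with unipotence on $T_{\ell}G_{ab}$. For the abelian part this is Grothendieck's Galois criterion: $G_{ab}$ has semi-abelian reduction if and only if $I$ acts unipotently on $T_{\ell}G_{ab}$ \cite[IX.3.5]{sga7a}. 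For the toric part, $I$ acts on $T_{\ell}G_{tor}\cong X_*(G_{tor})\otimes\Z_{\ell}$ through the finite group $\mathrm{Gal}(L_0/K)$, and a unipotent element of $\mathrm{GL}_{n}(\Z_{\ell})$ of finite order is trivial (being diagonalizable over $\overline{\Q}_{\ell}$ with all eigenvalues $1$); hence the action is unipotent if and only if it is trivial, i.e.\ if and only if $G_{tor}$ is split.

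For $(1)\Leftrightarrow(2)$ the crucial input is the behaviour of N\'eron $lft$-models in the Chevalley sequence $0\to G_{tor}\to G\to G_{ab}\to 0$: the canonical map $\mathcal{G}^o_{tor}\to\mathcal{G}^o$ of identity components is a closed immersion, and the fppf quotient $\mathcal{G}^o/\mathcal{G}^o_{tor}$ is identified, via the N\'eron mapping property, with an open subgroup scheme of $\mathcal{G}^o_{ab}$ (see \cite{B-X} and \cite[Ch.~7]{neron}). Passing to special fibers, and using that an open subgroup of the connected group $(\mathcal{G}_{ab})^o_s$ equals it, one gets a short exact sequence of smooth connected commutative $k$-groups
$$0\to(\mathcal{G}_{tor})^o_s\to\mathcal{G}^o_s\to(\mathcal{G}_{ab})^o_s\to 0.$$
Suppose $(2)$ holds. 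Then $G_{tor}$ is split, so $(\mathcal{G}_{tor})^o_s$ is a $k$-torus by \cite[10.1.5]{neron}, while $(\mathcal{G}_{ab})^o_s$ is semi-abelian; since an extension of a semi-abelian variety by a torus is semi-abelian, $\mathcal{G}^o_s$ is semi-abelian, i.e.\ $(1)$ holds. Conversely, suppose $(1)$ holds. Then $(\mathcal{G}_{tor})^o_s$ is a connected closed subgroup of the semi-abelian variety $\mathcal{G}^o_s$; being affine (\cite[2.3.1]{ananth}), it maps to the abelian quotient of $\mathcal{G}^o_s$ with finite, hence trivial, image, so it is a connected subgroup of the toric part of $\mathcal{G}^o_s$, hence a torus. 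Therefore
$$t(G_{tor})=\rho\big((\mathcal{G}_{tor})^o_s\big)=\dim(\mathcal{G}_{tor})^o_s=\dim G_{tor},$$
and Proposition \ref{prop-uniftor}, applied to $G_{tor}$ (which has potential good reduction), forces the maximal split subtorus of $G_{tor}$ to have full dimension; thus $G_{tor}$ is split. Moreover $(\mathcal{G}_{ab})^o_s\cong\mathcal{G}^o_s/(\mathcal{G}_{tor})^o_s$ is a quotient of a semi-abelian variety by a subtorus, hence semi-abelian, so $G_{ab}$ has semi-abelian reduction; thus $(2)$ holds.

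I expect the main obstacle to lie in pinning down the structural input for $(1)\Leftrightarrow(2)$: that $\mathcal{G}^o_{tor}\to\mathcal{G}^o$ is a closed immersion with cokernel an open subgroup scheme of $\mathcal{G}^o_{ab}$. Justifying this requires the theory of N\'eron models of semi-abelian varieties, and is one of the points where the rigid-uniformization machinery of \cite{B-X} enters; the only obvious way to avoid it --- deducing $(1)$ from $(3)$ directly --- would instead demand a N\'eron--Ogg--Shafarevich criterion valid for semi-abelian, not merely abelian, varieties. The remaining ingredients (the Tate-module exact sequence, the ``finite-order unipotent is trivial'' observation, and the elementary structure theory of commutative algebraic $k$-groups) are routine.
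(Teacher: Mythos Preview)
Your treatment of $(2)\Leftrightarrow(3)$ and of $(2)\Rightarrow(1)$ is correct and matches the paper's. The difficulty is exactly where you locate it: the ``crucial input'' for $(1)\Rightarrow(2)$. You assert that $\mathcal{G}^o_{tor}\to\mathcal{G}^o$ is a closed immersion with cokernel open in $\mathcal{G}^o_{ab}$, citing \cite{B-X} and \cite[Ch.~7]{neron}, but neither reference gives this without the hypothesis that $G_{tor}$ is split; both \cite[10.1.7]{neron} and \cite[4.11]{B-X} require the torus in the extension to be split. This is not a technicality one can patch with rigid uniformization. Observe that if $G_{tor}$ were not split, then by Proposition~\ref{prop-uniftor} the connected affine group $(\mathcal{G}_{tor})^o_s$ has a nontrivial unipotent radical $U$, and since $\mathcal{G}^o_s$ is assumed semi-abelian, every homomorphism $U\to\mathcal{G}^o_s$ is trivial; hence $U$ lies in the kernel of $(\mathcal{G}_{tor})^o_s\to\mathcal{G}^o_s$ and your displayed special-fiber sequence cannot be left exact. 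In other words, establishing the closed immersion on special fibers already forces $G_{tor}$ to be split, which is precisely the content of $(1)\Rightarrow(2)$; as written the argument is circular.

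The paper avoids this by never claiming exactness before splitness is known. Instead it proves the inequality $t(G)-t(G_{tor})\le t(G_{ab})$ by an $\ell$-torsion count: using \cite[7.3.3]{neron} to identify $_\ell\mathcal{H}^o(R)\cong{}_\ell\mathcal{H}^o_s(k)$ for $\mathcal{H}\in\{\mathcal{G}_{tor},\mathcal{G},\mathcal{G}_{ab}\}$, together with the fact that $_\ell(\mathcal{G}_{tor})^o_s(k)$ coincides with the $\ell$-torsion of its maximal subtorus (the unipotent quotient has no $\ell$-torsion), one obtains a left-exact sequence on $\ell$-torsion of the maximal subtori without any exactness of the ambient group schemes. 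One then passes to an extension $K'/K$ over which $G_{tor}$ splits and $G_{ab}$ has semi-abelian reduction; there \cite[10.1.7]{neron} legitimately applies and yields $t_{pot}(G_{ab})=t_{pot}(G)-t_{pot}(G_{tor})$. Combining this equality with the inequality, with $t(G)=t_{pot}(G)$ (from hypothesis $(1)$ and Proposition~\ref{prop-torrank}), and with $t(G_{tor})\le t_{pot}(G_{tor})$, $t(G_{ab})\le t_{pot}(G_{ab})$ forces $t(G_{tor})=t_{pot}(G_{tor})=\dim G_{tor}$, hence $G_{tor}$ is split. No rigid-analytic input enters this proposition.
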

\begin{proof}
The sequence of Tate modules
$$0\rightarrow T_\ell G_{tor}\rightarrow T_\ell G\rightarrow
T_\ell G_{ab}\rightarrow 0$$ is exact. Points (2) and (3) are
equivalent if $G_{tor}$ or $G_{ab}$ is trivial, by
  \cite[IX.3.8]{sga7a} and the canonical isomorphism of $I$-modules
$$T_\ell(G_{tor})\cong X(G_{tor})^{\vee}\otimes_{\Z} \Z_\ell$$
where $X(G_{tor})^{\vee}$ is the cocharacter module of $G_{tor}$.
 It follows that (2) and (3) are equivalent for arbitrary $G$.

If (2) holds, then by \cite[10.1.7]{neron}, the sequence of
identity components of N\'eron $lft$-models
$$0\rightarrow \mathcal{G}^o_{tor} \rightarrow
\mathcal{G}^o\rightarrow \mathcal{G}^o_{ab}\rightarrow 0$$ is
exact, so that $\mathcal{G}_s^o$ is semi-abelian.

So it suffices to prove the implication $(1)\Rightarrow (2)$. We
denote by $g$, $g_{tor}$ and $g_{ab}$ the dimensions of $G$,
$G_{tor}$ and $G_{ab}$, respectively.

Assume that $G_{tor}$ is split. Then, again by
\cite[10.1.7]{neron}, the sequence of identity components of
N\'eron $lft$-models
$$0\rightarrow \mathcal{G}^o_{tor} \rightarrow
\mathcal{G}^o\rightarrow \mathcal{G}^o_{ab}\rightarrow 0$$ is
exact. Since $(\mathcal{G}_{tor})^o_s$ is a torus and
$\mathcal{G}^o_s$ semi-abelian, we see that
$(\mathcal{G}_{ab})^o_s$ is semi-abelian.

Hence, it is enough to show that $G_{tor}$ is split, or
equivalently, that $t(G_{tor})=g_{tor}$ (Proposition
\ref{prop-uniftor}). We denote by $T_{tor}$, $T$ and $T_{ab}$ the
maximal subtori of $(\mathcal{G}_{tor})^o_s$, $\mathcal{G}_s^o$
and $(\mathcal{G}_{ab})^o_s$, respectively.

 Consider the commutative diagram
$$\begin{CD}
0@>>> _\ell \mathcal{G}_{tor}^o(R)@>>>  _\ell \mathcal{G}^o(R)
@>>>  _\ell \mathcal{G}_{ab}^o(R)
\\ @. @VVV @VVV @VVV
\\ @.  _\ell (\mathcal{G}_{tor})_s^o(k)@>>>  _\ell \mathcal{G}^o_s(k)
@>>>  _\ell (\mathcal{G}_{ab})_s^o(k)
\\ @. @A f AA @AAA @AAA
\\ 0@>>> _\ell T_{tor}(k) @>>> _\ell T(k) @>>> _\ell T_{ab}(k)
\end{CD}$$
The first row is exact, the upper vertical arrows are bijections
by
 \cite[7.3.3]{neron}, and the lower vertical arrows are injective.
 Moreover, since $(\mathcal{G}_{tor})_s^o$ is affine \cite[2.3.1]{ananth}, we know that
 $U:=(\mathcal{G}_{tor})_s^o/T_{tor}$ is unipotent \cite[XVII.7.2.1]{sga3.2}, so
 that $_\ell U=0$ and
 $f$ is bijective. It follows that the third row is exact, too.
 Looking at the cardinality of its entries, we see that
$$\ell^{\mathrm{dim}(T)-\mathrm{dim}(T_{tor})}\leq
\ell^{\mathrm{dim}(T_{ab})}$$ and, hence, that
\begin{equation}\label{eq-ineq}
t(G)-t(G_{tor})\leq t(G_{ab})\end{equation}

Let $K'$ be a finite separable extension of $K$ such that
$G_{tor}\times_K K'$ splits, and $G_{ab}\times_K K'$ has
semi-abelian reduction. If we denote by $\mathcal{G}'_{tor}$,
$\mathcal{G}'$ and $\mathcal{G}'_{ab}$ the N\'eron models of
$G_{tor} \times_K K'$, $G \times_K K'$ and $G_{ab} \times_K K'$, respectively, then the sequence
$$0\rightarrow (\mathcal{G}_{tor}')_s^o\rightarrow
(\mathcal{G}')_s^o\rightarrow (\mathcal{G}'_{ab})_s^o \rightarrow
0$$ is exact \cite[10.1.7]{neron}. This implies that
\begin{equation}\label{eq-poteq}
t_{pot}(G_{ab})=t_{pot}(G)-t_{pot}(G_{tor})\end{equation} On the
other hand, by (\ref{eq-ineq}) and Proposition \ref{prop-torrank},
we find
\begin{eqnarray*}
t_{pot}(G)-t_{pot}(G_{tor})&=&t(G)-t_{pot}(G_{tor})
\\ &\leq& t(G)-t(G_{tor})
\\ &\leq& t(G_{ab})
\\ &\leq& t_{pot}(G_{ab})
\end{eqnarray*}
Since the first and last term of the inequality are equal by
(\ref{eq-poteq}), we may conclude that
$$t(G_{tor})=t_{pot}(G_{tor})=g_{tor}$$
\end{proof}
\begin{cor}\label{cor-tornochange} Let $G$ be a semi-abelian $K$-variety with potential good
reduction, or an abelian $K$-variety. Denote by $e$ the degree of
the minimal extension $L$ of $K$ where $G$ acquires semi-abelian
reduction. If $K'$ is a finite separable extension of $K$ such
that $[K':K]$ is prime to $e$, then
$$t(G)=t(G\times_K K')$$
\end{cor}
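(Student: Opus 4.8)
The plan is to compute $t(G)$ and $t(G\times_K K')$ as the ranks of the Galois invariants of a single finitely generated $\Z$-module carrying two different (restricted) actions of inertia groups, and then to observe that these two invariant submodules coincide by an elementary Galois-theoretic argument using that $[K':K]$ is prime to $e$. We may and do assume $K'\subseteq K^s$, and write $I'=Gal(K^s/K')\subseteq I$.

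First I would treat the case where $G$ is semi-abelian with potential good reduction (so that $G\times_K K'$ also has potential good reduction). Every morphism from a split torus to $G$ factors through the unique maximal subtorus $G_{tor}$ of $G$ (Proposition \ref{prop-maxtor}(2)), so the maximal split subtorus of $G$ equals that of $G_{tor}$; combining Propositions \ref{prop-uniftor} and \ref{prop-maxsplittor}, and applying the same over $K'$, I get
$$t(G)=\mathrm{rank}_{\Z}\bigl(X(G_{tor})^I\bigr),\qquad t(G\times_K K')=\mathrm{rank}_{\Z}\bigl(X(G_{tor})^{I'}\bigr),$$
where $I'$ acts on the character module $N:=X(G_{tor})$ by restriction of the $I$-action. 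By Proposition \ref{prop-semiabred}, $G_{tor}$ splits over $L$, so the $I$-action on $N$ factors through $Gal(\Lambda/K)$, where $\Lambda\subseteq L$ is the splitting field of $G_{tor}$, a finite Galois extension of $K$ with $[\Lambda:K]$ dividing $e$. In the case where $G=A$ is an abelian $K$-variety I would instead use the rigid uniformization of Section \ref{subsec-rigunif} together with Proposition \ref{prop-uniftor2}, which give $t(A)=\mathrm{rank}_{\Z}(M^I)$ for $M$ the lattice in the uniformization; since the base change of the uniformization data is a rigid uniformization of $A\times_K K'$, the same proposition over $K'$ gives $t(A\times_K K')=\mathrm{rank}_{\Z}(M^{I'})$. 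As $M$ splits over $L$, the $I$-action on $N:=M$ again factors through $Gal(\Lambda/K)$ for some finite Galois $\Lambda\subseteq L$ with $[\Lambda:K]\mid e$.

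In both cases the corollary reduces to the following elementary statement: if $N$ is a finitely generated $\Z$-module on which $I$ acts through $Gal(\Lambda/K)$ for a finite Galois extension $\Lambda/K$ with $[\Lambda:K]$ prime to $[K':K]$, then $N^I=N^{I'}$. To see this, note that the image of $I'$ under the restriction map $I\to Gal(\Lambda/K)$ is $Gal(\Lambda/\Lambda\cap K')$, and $[\Lambda\cap K':K]$ divides both $[\Lambda:K]$ and $[K':K]$, hence equals $1$; so this image is all of $Gal(\Lambda/K)$, and therefore $N^{I'}=N^{Gal(\Lambda/K)}=N^I$. This gives $t(G)=t(G\times_K K')$, as desired.

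The step I expect to require the most care is the bookkeeping in the reduction: one must check that ``splits over $L$'' --- for $G_{tor}$, respectively for the lattice $M$ --- really translates into the statement that the kernel of $I\to\mathrm{Aut}_{\Z}(N)$ contains $Gal(K^s/L)$, so that the corresponding fixed field $\Lambda$ is Galois over $K$ and contained in $L$, and that $t(G\times_K K')$ is genuinely computed by the same module $N$ equipped with the restricted $I'$-action. Everything after that is formal; note also that the inequality $t(G)\le t(G\times_K K')$ is already available from Proposition \ref{prop-torrank}, whereas the argument above yields the equality directly.
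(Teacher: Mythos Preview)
Your proof is correct and follows essentially the same approach as the paper: reduce the question to showing $N^I=N^{I'}$ for a suitable finitely generated $\Z$-module $N$ on which $I$ acts through $Gal(L/K)$, and then use that $[K':K]$ is prime to $[L:K]$ to conclude that the images of $I$ and $I'$ in $Gal(L/K)$ coincide. The only minor difference is in the abelian case: the paper reduces immediately to the semi-abelian case with potential good reduction by invoking $t(A)=t(E)$ (from \cite[2.3]{B-X}), whereas you treat this case in parallel using the lattice $M$ and Proposition~\ref{prop-uniftor2}; since Proposition~\ref{prop-uniftor2} is itself proved via $t(A)=t(E)$, these two routes are equivalent.
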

\begin{proof}
Assume that $G$ is an abelian $K$-variety. If $E$ is the
semi-abelian variety appearing in the rigid uniformization of $G$,
then $E$ has potential good reduction, and $t(A)=t(E)$ by
\cite[2.3]{B-X}. Hence, it suffices to consider the case where $G$
is a semi-abelian $K$-variety with potential good reduction.

Denote by $T$ the maximal subtorus of $G$, and by $T_{sp}$ the
maximal split subtorus of $T$. By Proposition \ref{prop-uniftor},
it suffices to show that $T'_{sp}:=T_{sp}\times_K K'$ is the
maximal split subtorus of $T'=T\times_K K'$. By Proposition
\ref{prop-maxsplittor}, it is enough to show that
$$X(T)^{I}=X(T)^{I'}$$ where $X(T)$ is the character module of
$T$, and $I'=Gal(K^s/K')$.

We choose an embedding of $L$ in $K^s$. We know that $T$ splits
over $L$, by Proposition \ref{prop-semiabred}. Hence, $Gal(K^s/L)$
acts trivially on the character module $X(T)$ of $T$. Since
$[L:K]$ is prime to $[K':K]$, we know that the restriction
morphism
$$Gal(K'L/K')\rightarrow Gal(L/K)$$ is an isomorphism, so that
$$X(T)^{I}=X(T)^{I'}$$
\end{proof}

\begin{prop}\label{prop-goodred}
Let $G$ be a semi-abelian $K$-variety, with toric part $G_{tor}$
and abelian part $G_{ab}$.

The following are equivalent:
\begin{enumerate}
\item $G$ has good reduction \item $G$ has semi-abelian reduction,
and  $G_{ab}$ has potential good reduction \item the action of $I$
on $T_\ell G$ is trivial.
\end{enumerate}

Moreover, $G$ has potential good reduction iff $G_{ab}$ has
potential good reduction.
\end{prop}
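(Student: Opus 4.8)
The plan is to prove the cycle of implications $(1)\Rightarrow(2)\Rightarrow(3)\Rightarrow(1)$, and then the final equivalence separately, relying on Proposition~\ref{prop-semiabred}, Proposition~\ref{prop-torrank}, the structure of N\'eron $lft$-models of split tori and of abelian varieties with good reduction, and the fact that $k$ is separably closed --- so that $K$ admits no nontrivial unramified extension.

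The implications $(1)\Rightarrow(2)$ and $(3)\Rightarrow(1)$ are short. If $G$ has good reduction, then by definition $G_{tor}$ and $G_{ab}$ have good reduction; a $K$-torus has good reduction if and only if it is split (as $k$ is separably closed), so $G_{tor}$ is split, while $G_{ab}$ has good --- hence semi-abelian, and potentially good --- reduction, so Proposition~\ref{prop-semiabred}, implication $(2)\Rightarrow(1)$, yields that $G$ has semi-abelian reduction; together with potential good reduction of $G_{ab}$ this is $(2)$. Conversely, if $I$ acts trivially on $T_\ell G$, it acts trivially on the $I$-submodule $T_\ell G_{tor}\cong X(G_{tor})^{\vee}\otimes_{\Z}\Z_\ell$, hence on the cocharacter module $X(G_{tor})^{\vee}$, so $G_{tor}$ is split; and it acts trivially on the quotient $T_\ell G_{ab}$, so $G_{ab}$ has good reduction by the criterion of N\'eron--Ogg--Shafarevich \cite[IX]{sga7a}. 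Thus $G$ has good reduction, which is $(1)$.

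The main point is $(2)\Rightarrow(3)$. Assume $G$ has semi-abelian reduction and $G_{ab}$ has potential good reduction. By Proposition~\ref{prop-semiabred}, $G_{tor}$ is split and $G_{ab}$ has semi-abelian reduction; since $G_{ab}$ also has potential good reduction, Proposition~\ref{prop-torrank} gives $t(G_{ab})=t_{pot}(G_{ab})=0$, so the identity component of the special fibre of the N\'eron model of $G_{ab}$ is a semi-abelian variety with vanishing reductive rank, i.e.\ an abelian variety; thus $G_{ab}$ has good reduction, with N\'eron model an abelian $R$-scheme $\mathcal{A}$. Since $G_{tor}$ is split, \cite[10.1.7]{neron} shows that the sequence of identity components of N\'eron $lft$-models $0\to\mathcal{G}^o_{tor}\to\mathcal{G}^o\to\mathcal{A}\to0$ is exact, and $\mathcal{G}^o_{tor}=\mathbb{G}_{m,R}^{\,t}$ with $t=\dim G_{tor}$ by \cite[10.1.5]{neron}. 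Hence $\mathcal{G}^o$ is a semi-abelian $R$-scheme with generic fibre $G$, and so, for $\ell\neq p$, the $R$-group scheme $\mathcal{G}^o[\ell^n]$ is finite \'etale over $R$, being an extension of the finite \'etale group $\mathcal{A}[\ell^n]$ by the finite \'etale group $\mu_{\ell^n}^{\,t}$. Therefore $G[\ell^n]$ is unramified for every $n$; as $K$ has no nontrivial unramified extension, $I$ acts trivially on $G[\ell^n](K^s)$, and hence on $T_\ell G$, which is $(3)$.

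Finally, for the last assertion: if $G\times_K K'$ has good reduction for some finite $K'/K$, then its abelian part $G_{ab}\times_K K'$ has good reduction, so $G_{ab}$ has potential good reduction; conversely, if $G_{ab}$ has potential good reduction, pick a finite extension $K'/K$ over which $G_{ab}$ acquires good reduction and $G_{tor}$ becomes split --- then $G\times_K K'$ has split toric part and abelian part with good reduction, hence good reduction, so $G$ has potential good reduction. The only genuinely substantial step is $(2)\Rightarrow(3)$: the difficulty is to promote the hypotheses (semi-abelian reduction of $G$, potential good reduction of $G_{ab}$) into an honest $R$-model of $G$ that is a semi-abelian scheme, and it is the \emph{good}-reduction conclusion for $G_{ab}$, rather than mere semi-abelian reduction, that forces $\mathcal{G}^o[\ell^n]$ to be finite \'etale over $R$; without it the extension linking $T_\ell G_{tor}$ and $T_\ell G_{ab}$ may be ramified, as already occurs for a Tate curve.
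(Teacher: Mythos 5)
Your proof is correct, but it organizes the cycle in the opposite direction from the paper and handles the hard implication differently. The paper proves $(3)\Rightarrow(2)\Rightarrow(1)\Rightarrow(3)$: the step $(3)\Rightarrow(2)$ is the Tate-module sequence argument (which you use, bundled into your $(3)\Rightarrow(1)$); $(2)\Rightarrow(1)$ is dispatched via Proposition~\ref{prop-semiabred}, with the same implicit upgrade ``semi-abelian reduction $+$ $t_{pot}=0$ $\Rightarrow$ good reduction'' that you spell out; and the genuinely non-trivial step $(1)\Rightarrow(3)$ is handled as in \cite[IX.2.2.9]{sga7a}, by noting that $T_\ell\mathcal{G}_s^o\cong T_\ell\mathcal{G}^o(R)=(T_\ell G)^I$ and $T_\ell G$ are free $\Z_\ell$-modules of the same rank $2g_{ab}+g_{tor}$, so that the saturated submodule $(T_\ell G)^I$ must be all of $T_\ell G$. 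Your version of the hard step instead constructs an explicit semi-abelian $R$-model $\mathcal{G}^o$ as an extension of an abelian $R$-scheme by $\mathbb{G}_{m,R}^{\,t}$ (using \cite[10.1.7]{neron} and \cite[10.1.5]{neron}), and then observes that $\mathcal{G}^o[\ell^n]$ is an extension of finite \'etale group schemes, hence finite \'etale, so the inertia action is trivial. Both arguments are standard N\'eron--Ogg--Shafarevich-type proofs; the rank-counting version the paper uses is shorter once the identification $(T_\ell G)^I\cong T_\ell\mathcal{G}_s^o$ is accepted, while your \'etale-torsion argument is more concrete and exhibits the model directly. Either way the remaining implications and the final ``iff'' on potential good reduction match the paper's treatment.
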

\begin{proof}
It follows immediately from the definition that $G$ has potential
good reduction iff $G_{ab}$ has potential good reduction.
 The sequence of Tate modules
$$0\rightarrow T_\ell G_{tor}\rightarrow T_\ell G\rightarrow
T_\ell G_{ab}\rightarrow 0$$ is exact, so that $I$ acts trivially
on $T_\ell G_{ab}$ and $T_\ell G_{tor}$ if $I$ acts trivially on
$T_\ell G$. Then $G_{ab}$ has good reduction, by the criterion of
N\'eron-Ogg-Shafarevich \cite[IX.2.2.9]{sga7a}, and $G_{tor}$ is
split, by the canonical isomorphism of $I$-modules
$$T_\ell(G_{tor})\cong X(G_{tor})^{\vee}\otimes_{\Z} \Z_\ell$$
where $X(G_{tor})^{\vee}$ is the cocharacter module of $G_{tor}$.
This proves $(3)\Rightarrow (2)$. The implication $(2)\Rightarrow
(1)$ follows from Proposition \ref{prop-semiabred}.

It remains to show that $(1)\Rightarrow (3)$.
 This can be proven
in the same way as \cite[IX.2.2.9]{sga7a}, namely, by noting that
the free $\Z_\ell$-modules
$$T_\ell \mathcal{G}_s^o\cong T_\ell \mathcal{G}^o(R)=(T_\ell G)^I$$ and $T_\ell G$ both have rank
$2g_{ab}+g_{tor}$, with $g_{ab}$ and $g_{tor}$ the dimensions of
$G_{ab}$, resp. $G_{tor}$.
\end{proof}

\begin{prop}\label{prop-compgr}
If $G$ is a semi-abelian $K$-variety, then there exists a
canonical isomorphism
$$(\Phi_G)_\ell\cong H^1(I,T_\ell G)_\mathrm{tors}$$
where $(\Phi_G)_\ell$ denotes the $\ell$-primary part of $\Phi_G$
(the subgroup of elements killed by a power of $\ell$), and
$H^1(I,T_\ell G)_\mathrm{tors}$ the torsion part of $H^1(I,T_\ell
G)$.
\end{prop}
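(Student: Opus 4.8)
The plan is to compare the two sides after applying $\varprojlim$ to the $\ell^n$-Kummer sequences of $G$. Let $\mathcal{G}$ be the N\'eron $lft$-model of $G$. First I would record that $\mathcal{G}^o(R)$ is $\ell$-divisible: since $\mathcal{G}^o_s$ is a smooth connected commutative algebraic $k$-group and $k$ is separably closed with $\ell$ prime to $p$, multiplication by $\ell$ on $\mathcal{G}^o_s$ is finite \'etale and surjective, hence surjective on $k$-points; and the kernel of the (surjective) reduction map $\mathcal{G}^o(R)\to \mathcal{G}^o_s(k)$ is the formal group $\widehat{\mathcal{G}^o}(R)$, which is uniquely $\ell$-divisible because $\ell\in R^\times$. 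Since $G(K)=\mathcal{G}(R)$ and $\mathcal{G}(R)/\mathcal{G}^o(R)\cong\Phi_G$ (as $k$ is separably closed), applying the snake lemma to multiplication by $\ell^n$ on $0\to\mathcal{G}^o(R)\to G(K)\to\Phi_G\to 0$ and using $\mathcal{G}^o(R)/\ell^n=0$ yields canonical isomorphisms $G(K)/\ell^n G(K)\cong\Phi_G/\ell^n\Phi_G$ compatible with the transition maps. As $\Phi_G$ is finitely generated, passing to the limit gives $\varprojlim_n G(K)/\ell^n G(K)\cong\Z_\ell^{\,r}\oplus(\Phi_G)_\ell$ with $r=\mathrm{rank}\,\Phi_G$; in particular its torsion subgroup is exactly $(\Phi_G)_\ell$, and its transition maps are surjective.

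Next I would invoke Kummer theory. For $\ell$ prime to $p$ the isogeny $\ell^n\colon G\to G$ is finite \'etale and surjective on $K^s$-points, so $0\to G[\ell^n]\to G(K^s)\xrightarrow{\ell^n}G(K^s)\to 0$ is exact, and the long exact sequence of $I$-cohomology gives short exact sequences
$$0\to G(K)/\ell^n G(K)\to H^1(I,G[\ell^n])\to H^1(I,G(K^s))[\ell^n]\to 0.$$
These form an inverse system through the maps induced by $G[\ell^{n+1}]\xrightarrow{\ell}G[\ell^n]$: on the left the transition map is the natural quotient, while on the right, factoring $G[\ell^{n+1}]\xrightarrow{\ell}G[\ell^n]\hookrightarrow G(K^s)$ through $G[\ell^{n+1}]\hookrightarrow G(K^s)\xrightarrow{\ell}G(K^s)$ shows it is multiplication by $\ell$. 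The groups $H^0(I,G[\ell^n])=G[\ell^n](K)$ are finite, so $(H^0(I,G[\ell^n]))_n$ is Mittag--Leffler and $\varprojlim_n H^1(I,G[\ell^n])\cong H^1(I,T_\ell G)$ (continuous cohomology); also $\varprojlim^1_n G(K)/\ell^n G(K)=0$ by surjectivity of its transition maps.

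Passing to the limit in the short exact sequences above therefore produces an exact sequence
$$0\to\varprojlim_n G(K)/\ell^n G(K)\to H^1(I,T_\ell G)\to\varprojlim_n H^1(I,G(K^s))[\ell^n]\to 0.$$
Since the transition maps on the last term are multiplication by $\ell$, that term is the Tate module $T_\ell\bigl(H^1(I,G(K^s))\bigr)$, which is torsion-free. Hence the torsion subgroup of $H^1(I,T_\ell G)$ equals that of $\varprojlim_n G(K)/\ell^n G(K)$, namely $(\Phi_G)_\ell$; as every map in the construction is canonical, this yields the desired canonical isomorphism. (If $\mathrm{char}(k)=p>0$ the inertia group $I$ has a nontrivial wild part, but since $\ell\neq p$ this contributes nothing to $\Z_\ell$-coefficient cohomology and may be ignored throughout.)

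The step I expect to require the most care is this last limit argument: verifying the Mittag--Leffler hypotheses, justifying the identification $\varprojlim_n H^1(I,G[\ell^n])\cong H^1(I,T_\ell G)$, and, above all, the computation that the transition maps on $H^1(I,G(K^s))[\ell^n]$ are multiplication by $\ell$ — it is precisely this fact that forces the cokernel term to be torsion-free and thereby pins down $H^1(I,T_\ell G)_{\mathrm{tors}}$ as exactly $(\Phi_G)_\ell$ rather than something larger.
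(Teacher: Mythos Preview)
Your argument is correct. The paper itself does not give a self-contained proof here; it simply cites \cite[IX.11.3.8]{sga7a} and asserts that Grothendieck's argument for abelian varieties carries over verbatim to the semi-abelian case. What you have written is essentially that argument spelled out in full: the $\ell$-divisibility of $\mathcal{G}^o(R)$, the resulting identification $G(K)/\ell^n\cong\Phi_G/\ell^n$, and the passage to the limit in the Kummer sequences are exactly the ingredients SGA7 uses, and your treatment confirms that nothing in the proof is special to abelian varieties.

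Regarding the step you flagged as most delicate: your computation that the transition maps on $H^1(I,G(K^s))[\ell^n]$ are multiplication by $\ell$ is correct, via the factorization $G[\ell^{n+1}]\hookrightarrow G(K^s)\xrightarrow{\ell}G(K^s)$ you indicate; this does pin down the right-hand limit as the Tate module $T_\ell\bigl(H^1(I,G(K^s))\bigr)$ and hence as torsion-free. The identification $\varprojlim_n H^1(I,G[\ell^n])\cong H^1_{\mathrm{cont}}(I,T_\ell G)$ follows from the finiteness of $G[\ell^n](K)$ as you say. Two small remarks: your closing parenthetical about wild inertia is unnecessary (and slightly misleading), since your argument works with the full inertia group $I$ throughout and never needs to pass to the tame quotient; and the finite generation of $\Phi_G$ for semi-abelian $G$, which you invoke, is standard but might merit a word of justification or a reference.
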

\begin{proof}
This is a generalization of \cite[IX.11.3.8]{sga7a}, and the proof
remains valid.
\end{proof}

\section{Behaviour of the component group under
ramification}\label{sec-main}

\begin{lemma}\label{lemma-linalg}
Consider a commutative diagram of abelian groups
$$\begin{CD}
0@>>> M_1 @>>> N_1 @>f>> P_1 @>>> Q_1@>>> 0
\\ @. @V\alpha V\wr V @V\beta VV @VV\gamma V @V\wr V\delta V @.
\\ 0@>>> M_2 @>>> N_2 @>>g> P_2 @>>> Q_2@>>> 0
\end{CD}$$
 with exact rows, where $\alpha$ and $\delta$ are isomorphisms.
Then $f$ induces an isomorphism $ker(\beta)\cong ker(\gamma)$, and
$g$ induces an isomorphism $coker(\beta)\cong coker(\gamma)$.
\end{lemma}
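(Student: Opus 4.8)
The plan is to reduce the statement to two applications of the snake lemma, by factoring each four-term exact row through the image of its middle map. Write $I_1=\operatorname{im}(f)=\ker(P_1\to Q_1)\subseteq P_1$ and $I_2=\operatorname{im}(g)\subseteq P_2$. Since the maps $M_j\to N_j$ are injective, they identify $M_j$ with $\ker f$, resp. $\ker g$, so each row splits into two short exact sequences $0\to M_j\to N_j\to I_j\to 0$ and $0\to I_j\to P_j\to Q_j\to 0$ for $j=1,2$. Because $\gamma$ commutes with the maps down to $Q_1,Q_2$, it carries $\ker(P_1\to Q_1)$ into $\ker(P_2\to Q_2)$, hence restricts to a homomorphism $\gamma'\colon I_1\to I_2$; together with $\alpha,\beta,\delta$ this turns both pairs of short exact sequences into commutative ladders.

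First I would apply the snake lemma to the ladder $0\to I_1\to P_1\to Q_1\to 0$, $0\to I_2\to P_2\to Q_2\to 0$ with vertical maps $\gamma',\gamma,\delta$. Since $\delta$ is an isomorphism we have $\ker\delta=\operatorname{coker}\delta=0$, so the exact sequence $0\to\ker\gamma'\to\ker\gamma\to\ker\delta\to\operatorname{coker}\gamma'\to\operatorname{coker}\gamma\to\operatorname{coker}\delta\to 0$ collapses to isomorphisms $\ker\gamma'\xrightarrow{\sim}\ker\gamma$ (induced by $I_1\hookrightarrow P_1$) and $\operatorname{coker}\gamma'\xrightarrow{\sim}\operatorname{coker}\gamma$ (induced by $I_2\hookrightarrow P_2$). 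Next I would apply the snake lemma to the ladder $0\to M_1\to N_1\to I_1\to 0$, $0\to M_2\to N_2\to I_2\to 0$ with vertical maps $\alpha,\beta,\gamma'$. Since $\alpha$ is an isomorphism, the same collapse gives isomorphisms $\ker\beta\xrightarrow{\sim}\ker\gamma'$ (induced by $N_1\to I_1$) and $\operatorname{coker}\beta\xrightarrow{\sim}\operatorname{coker}\gamma'$ (induced by $N_2\to I_2$). Composing the two pairs yields $\ker\beta\cong\ker\gamma$ and $\operatorname{coker}\beta\cong\operatorname{coker}\gamma$.

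The one point to treat carefully is that these composite isomorphisms are the maps named in the statement. For kernels, the composite $\ker\beta\to\ker\gamma'\to\ker\gamma$ is the restriction of the surjection $N_1\to I_1$ followed by the inclusion $I_1\hookrightarrow P_1$, i.e. of $f$; for cokernels, the composite $\operatorname{coker}\beta\to\operatorname{coker}\gamma'\to\operatorname{coker}\gamma$ is the map induced by $N_2\to I_2\hookrightarrow P_2$, i.e. by $g$. Alternatively, one can bypass the snake lemma altogether and verify both assertions by a direct four-term diagram chase: that $f$ sends $\ker\beta$ into $\ker\gamma$ is immediate from $g\beta=\gamma f$; injectivity of $f|_{\ker\beta}$ uses injectivity of $M_1\to N_1$ and bijectivity of $\alpha$; surjectivity onto $\ker\gamma$ uses injectivity of $\delta$ (to see that an element of $\ker\gamma$ already lies in $I_1=\operatorname{im} f$) together with bijectivity of $\alpha$; and dually $g$ descends to $\operatorname{coker}\beta\to\operatorname{coker}\gamma$, with surjectivity from surjectivity of $\delta$ and of $P_1\to Q_1$, and injectivity again from injectivity of $\delta$ and bijectivity of $\alpha$.

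I do not expect any genuine obstacle: the content is entirely formal, and the "hardest" part is merely the bookkeeping of identifying the induced maps. I would therefore present the snake-lemma reduction as the main argument and leave the verification of the induced maps to the reader, or else simply record the direct chase.
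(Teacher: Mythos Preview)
Your proof is correct and follows essentially the same route as the paper's. The paper is just terser: it notes that $M_1\cap\ker\beta=0$ and $M_2\subset\operatorname{im}\beta$ (your first snake-lemma step with $\alpha$ an isomorphism, rephrased), quotients both rows by $M_j$ to reduce to the case $M_1=M_2=0$, and then finishes with a single diagram chase (your second snake-lemma step with $\delta$ an isomorphism). Your explicit factorization through $I_j=\operatorname{im}(f),\operatorname{im}(g)$ and two invocations of the snake lemma is the same reduction made fully explicit.
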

\begin{proof}
Note that $M_1\cap ker(\beta)=\{0\}$ and that $M_2\subset
im(\beta)$. Dividing the first row by $M_1$ and the second by
$M_2$, we may assume that $M_1=M_2=0$. Now the result follows from
an easy diagram chase.
\end{proof}

 In \cite[4.7]{B-X}, Bosch and Xarles
constructed the identity component $\mathscr{F}^o$ for an
arbitrary abelian sheaf $\mathscr{F}$ on $(\Spf R)_{sm}$.  If the
sheaf $\mathscr{F}$ is representable by a formal group $R$-scheme
$\X$, then the identity component $\X^o$ represents the sheaf
$\mathscr{F}^o$. The component sheaf of $\mathscr{F}$ is defined
by
$$\Phi_{\mathscr{F}}=\mathscr{F}/\mathscr{F}^o$$
If $\mathcal{X}$ is a smooth commutative group $R$-scheme and
$\mathcal{F}$ is represented by the formal completion
$\widehat{\mathcal{X}}$, then $\Phi_{\mathscr{F}}$ is the constant
sheaf on $(\Spf R)_{sm}$ associated to the group
$\pi_0(\mathcal{X}_s)=\mathcal{X}_s/\mathcal{X}_s^o$ of connected
components of $\mathcal{X}_s$. To see this, note that the obvious
morphism of sheaves $\mathscr{F}\rightarrow \pi_0(\mathcal{X}_s)$
is surjective (by smoothness of $\mathcal{X}$) and that its kernel
is precisely $\mathscr{F}^o$.
\begin{prop}\label{prop-weilres}
Let $G$ be a semi-abelian $K$-variety, with N\'eron $lft$-model
$\mathcal{G}$. Let $K'$ be a finite separable extension of $K$,
and denote by $\mathcal{G}'$ the N\'eron $lft$-model of
$G'=G\times_K K'$. Using the notation from Section
\ref{subsec-rigunif}, there exists a canonical isomorphism
$$h^*\Phi_{j_*(h_K)_*\mathscr{F}_{G'}}\cong \Phi_{j'_*\mathscr{F}_{G'}}$$
and these abelian sheaves on $(\Spf R')_{sm}$ are canonically
isomorphic to the constant sheaf associated to the group
$\Phi_{G'}$.
\end{prop}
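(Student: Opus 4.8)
The strategy is to identify $j_*(h_K)_*\mathscr{F}_{G'}$ with a Weil restriction of $\widehat{\mathcal{G}'}$ along the finite morphism $\Spf R' \to \Spf R$, and then to read off the component sheaf from the special fiber, using that $k'/k$ is purely inseparable. The commutativity $j \circ h_K = h \circ j'$ of the site diagram yields a canonical isomorphism $j_*(h_K)_*\mathscr{F}_{G'} \cong h_*(j'_*\mathscr{F}_{G'})$, and since the formal completion of the N\'eron $lft$-model represents the corresponding sheaf, $j'_*\mathscr{F}_{G'}$ is represented by $\widehat{\mathcal{G}'}$; thus $j_*(h_K)_*\mathscr{F}_{G'}$ is the sheaf on $(\Spf R)_{sm}$ sending a smooth formal $R$-scheme $U$ to $\widehat{\mathcal{G}'}(U\times_R R') = \mathrm{Hom}_{R'}(U\times_R R',\widehat{\mathcal{G}'})$. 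I would then show that this functor is represented by the Weil restriction $\mathfrak{W} := \mathrm{Res}_{R'/R}(\widehat{\mathcal{G}'})$, taken in the category of formal $R$-schemes. Here one uses that $R'$ is a complete discrete valuation ring, finite flat over $R$, that $\widehat{\mathcal{G}'}$ is smooth and separated and is a disjoint union of $\Phi_{G'}$ open pieces, each of finite type and a torsor under the finite-type formal group scheme $\widehat{\mathcal{G}'^o}$, and that $\Spf R'$ is connected; it should follow that $\mathfrak{W}$ exists, is a smooth formal group $R$-scheme locally of finite type, and that its special fiber is the Weil restriction $\mathrm{Res}_{k'/k}(\mathcal{G}'_s)$.

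Next I would compute $\pi_0(\mathfrak{W}_s)$. Because $k$ is separably closed, $k'/k$ is finite and purely inseparable, so $\mathrm{Res}_{k'/k}$ preserves connectedness: writing $\mathcal{G}'_s = \bigsqcup_{c\in\Phi_{G'}}(\mathcal{G}'_s)_c$ as the disjoint union of its (connected) components, one has $\mathrm{Res}_{k'/k}(\mathcal{G}'_s) = \bigsqcup_{c\in\Phi_{G'}}\mathrm{Res}_{k'/k}\big((\mathcal{G}'_s)_c\big)$ with each factor connected, whence $\pi_0(\mathfrak{W}_s) = \Phi_{G'}$. Since $\mathfrak{W}$ represents $j_*(h_K)_*\mathscr{F}_{G'}$ and is a smooth formal group $R$-scheme, the component sheaf $\Phi_{j_*(h_K)_*\mathscr{F}_{G'}}$ is the constant sheaf $\underline{\Phi_{G'}}$ on $(\Spf R)_{sm}$, by the description of component sheaves of representable sheaves recalled before the statement. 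Applying $h^*$ --- which sends a constant sheaf to the constant sheaf on the same group --- gives $h^*\Phi_{j_*(h_K)_*\mathscr{F}_{G'}} \cong \underline{\Phi_{G'}}$ on $(\Spf R')_{sm}$. On the other hand, $j'_*\mathscr{F}_{G'}$ being represented by $\widehat{\mathcal{G}'}$, the same description gives $\Phi_{j'_*\mathscr{F}_{G'}} = \underline{\pi_0(\mathcal{G}'_s)} = \underline{\Phi_{G'}}$. Composing the two identifications produces the asserted canonical isomorphism and exhibits both sheaves as the constant sheaf $\underline{\Phi_{G'}}$.

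The step I expect to be the main obstacle is the representability of $h_*\widehat{\mathcal{G}'}$ by a Weil restriction with good properties: one must check that $\mathrm{Res}_{R'/R}(\widehat{\mathcal{G}'})$ does not fail to exist for the usual gluing reasons (here the local nature of $R'$ and the uniform component structure of $\widehat{\mathcal{G}'}$ are essential), that it is smooth and locally of finite type, and that its formation commutes with reduction to the residue fields; one also needs the (standard) fact that Weil restriction along a finite purely inseparable extension preserves connectedness. A variant of the argument would instead push the exact sequence $0 \to \widehat{\mathcal{G}'^o} \to \widehat{\mathcal{G}'} \to \underline{\Phi_{G'}} \to 0$ forward along $h_*$ and observe that $h_*\underline{\Phi_{G'}} = \underline{\Phi_{G'}}$ (since $\Spf R' \to \Spf R$ is a homeomorphism on special fibers) and that $h_*\widehat{\mathcal{G}'^o}$ is connected; but this requires the separate verification that $h_*$ is exact, which --- unlike exactness of $(h_K)_*$ in Lemma \ref{lemm-exact} --- cannot be reduced to a Galois cover, since the Galois closure of $K'$ over $K$ need not be unramified.
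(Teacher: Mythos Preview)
Your approach is correct and follows the same overall strategy as the paper --- identify $j_*(h_K)_*\mathscr{F}_{G'}$ with a Weil restriction and then compare component groups --- but differs in execution at two points.

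First, on representability: you work directly with a \emph{formal} Weil restriction $\mathrm{Res}_{R'/R}(\widehat{\mathcal{G}'})$ and flag its existence as the main obstacle. The paper avoids this by staying on the algebraic side: since $(\mathcal{G}')^o$ is quasi-projective \cite[6.4.1]{neron}, the algebraic Weil restriction $\mathcal{W}=\prod_{R'/R}\mathcal{G}'$ exists as a smooth separated group $R$-scheme \cite[7.6.4+5]{neron}, and is automatically the N\'eron $lft$-model of $W=\prod_{K'/K}G'$. One then invokes Bertapelle's identification $\mathscr{F}_W\cong (h_K)_*\mathscr{F}_{G'}$ \cite[1.19]{bertapelle}, so that $j_*(h_K)_*\mathscr{F}_{G'}$ is represented by $\widehat{\mathcal{W}}$ without any separate formal representability argument. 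This is cleaner than your route, which would require checking existence and smoothness of the formal Weil restriction by hand.

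Second, on the component comparison: you argue that $\mathrm{Res}_{k'/k}$ preserves connectedness because $k'/k$ is purely inseparable, hence $\pi_0(\mathfrak{W}_s)=\Phi_{G'}$. The paper instead cites the argument of Edixhoven--Liu--Lorenzini \cite{ELL}, which exhibits a smooth surjective morphism $\mathcal{W}_s\times_k k'\to\mathcal{G}'_s$ with connected kernel. These are two packagings of the same fact; your formulation is perhaps more direct, while the paper's has the advantage of pointing to an explicit reference.

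Your alternative variant (pushing forward the short exact sequence along $h_*$) is also sound in spirit, and you are right that exactness of $h_*$ is the issue there; the paper's route via algebraic Weil restriction sidesteps this entirely.
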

\begin{proof}
The sheaf $j'_*\mathscr{F}_{G'}$ is represented by the smooth
formal $R'$-scheme $\widehat{\mathcal{G}'}$, so that
$\Phi_{j'_*\mathscr{F}_{G'}}$ is canonically isomorphic to the
constant sheaf $\Phi_{G'}$.

Since $(\mathcal{G}')^o$ is quasi-projective \cite[6.4.1]{neron},
the Weil restriction
$$\prod_{R'/R}\mathcal{G}'$$
of $\mathcal{G}'$ with respect to $R\rightarrow R'$ is
representable by a separated smooth group $R$-scheme $\mathcal{W}$
\cite[7.6.4+5]{neron}. Since Weil restriction commutes with base
change, the generic fiber $W:=\mathcal{W}_K$  is the Weil
restriction of $G'$ to $K$. It is obvious that $\mathcal{W}$ is a
N\'eron $lft$-model for $W$.

 By
\cite[1.19]{bertapelle}, there is a canonical isomorphism
$$\mathscr{F}_{W}\cong (h_K)_* \mathscr{F}_{G'}$$
 By the
same arguments as above, the abelian sheaf
$$h^*\Phi_{j_*(h_K)_*\mathscr{F}_{G'}}$$ on $(\Spf R')_{sm}$ is
canonically isomorphic to the constant sheaf $\Phi_{W}$.
 Hence, it suffices to construct a canonical isomorphism between
 $\Phi_W$ and $\Phi_{G'}$.

For this, we can copy the last part of the proof of Theorem 1 in
\cite{ELL}, where the authors construct a smooth surjective
morphism of algebraic $k'$-groups $$\mathcal{W}_s\times_k
k'\rightarrow \mathcal{G}'_s$$ with connected kernel (in
\cite{ELL}, it is assumed that $G$ is an abelian variety, but the
proof is also valid for semi-abelian varieties).
\end{proof}

\begin{cor}\label{cor-killd}
We keep the notations of Proposition \ref{prop-weilres}. The
natural morphism of group $R'$-schemes
$$\mathcal{G}\times_R R'\rightarrow \mathcal{G}'$$
induces a morphism of abelian groups
$$\alpha:\Phi_G\rightarrow \Phi_{G'}$$
whose kernel is killed by $d=[K':K]$.
\end{cor}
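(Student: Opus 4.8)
The plan is to construct a ``norm'' homomorphism $N\colon\Phi_{G'}\to\Phi_G$ such that $N\circ\alpha$ equals multiplication by $d$ on $\Phi_G$. Granting this, the corollary is immediate: if $x\in\ker(\alpha)$, then $dx=N(\alpha(x))=N(0)=0$.

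To build $N$ I would work on the rigid smooth site and use the trace map of Section \ref{subsec-trace}. Put $\mathscr{F}=\mathscr{F}_G$, so that $(h_K)^*\mathscr{F}_G=\mathscr{F}_{G'}$, and recall the tautological morphism $\tau\colon\mathscr{F}_G\to(h_K)_*\mathscr{F}_{G'}$ together with the trace map $tr\colon(h_K)_*\mathscr{F}_{G'}\to\mathscr{F}_G$ satisfying $tr\circ\tau=\cdot\,d$. Applying $j_*$ and then passing to component sheaves $\Phi_{(-)}=(-)/(-)^{o}$ (functorial by \cite[4.7]{B-X}), both of which are additive functors, we obtain homomorphisms
$$\Phi_G\xrightarrow{\ \overline{\tau}\ }\Phi_{j_*(h_K)_*\mathscr{F}_{G'}}\xrightarrow{\ \overline{tr}\ }\Phi_G$$
whose composition is multiplication by $d$; here I use that $j_*\mathscr{F}_G$ is represented by $\widehat{\mathcal{G}}$, so that $\Phi_{j_*\mathscr{F}_G}$ is the constant sheaf $\Phi_G$. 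By Proposition \ref{prop-weilres} (and its proof), $\Phi_{j_*(h_K)_*\mathscr{F}_{G'}}$ is canonically the constant sheaf $\Phi_{G'}$, so that $\overline{tr}$ may be read as the desired map $N\colon\Phi_{G'}\to\Phi_G$ and $\overline{\tau}$ as a map $\Phi_G\to\Phi_{G'}$.

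It then remains to identify $\overline{\tau}$ with $\alpha$ --- or, what suffices, to check that $\alpha=\iota\circ\overline{\tau}$ for some isomorphism $\iota$. Here I would unwind the Weil restriction description used in the proof of Proposition \ref{prop-weilres}: writing $W=\prod_{K'/K}G'$ and $\mathcal{W}=\prod_{R'/R}\mathcal{G}'$ for its N\'eron $lft$-model, the isomorphism $(h_K)_*\mathscr{F}_{G'}\cong\mathscr{F}_{W}$ of \cite[1.19]{bertapelle} is compatible with the two adjunctions (base change $\dashv$ Weil restriction on algebraic groups, and $(h_K)^*\dashv(h_K)_*$ on abelian sheaves), so that $\tau$ corresponds to $\mathscr{F}_{u}$, where $u\colon G\to W$ is the canonical morphism. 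Pushing forward by $j_*$, one reads off that $\overline{\tau}$ is the map $\Phi_G\to\Phi_W$ induced by $u$ on N\'eron models. On the other hand, the morphism $\mathcal{G}\times_R R'\to\mathcal{G}'$ that defines $\alpha$ factors, by the triangle identity for this adjunction, as $\mathcal{G}\times_R R'\to\mathcal{W}\times_R R'\xrightarrow{\ \varepsilon\ }\mathcal{G}'$ with $\varepsilon$ the counit, and the isomorphism $\Phi_W\cong\Phi_{G'}$ produced in Proposition \ref{prop-weilres} (following \cite{ELL}) is precisely the one induced by $\varepsilon$ on component groups. Combining these identifications gives $\alpha=\iota\circ\overline{\tau}$ with $\iota$ an isomorphism, which completes the argument.

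The step I expect to be the main obstacle is this last identification, i.e.\ matching the sheaf-theoretic map $\overline{\tau}$ --- built from the unit $\tau$, the functor $j_*$, and the constant-sheaf identifications --- with the geometric map $\alpha$ coming from the canonical morphism of N\'eron models. This is essentially a bookkeeping exercise on compatibility of adjunctions (the unit of ``Weil restriction after base change'' versus $\tau$, and the counit versus the isomorphism of Proposition \ref{prop-weilres}), not a new idea, but it has to be carried through carefully. Everything else --- the trace map, additivity of $j_*$ and of $\Phi_{(-)}$, functoriality of the identity component, and the representability and Weil restriction facts for N\'eron $lft$-models --- is already available from the preceding sections and from \cite{B-X,neron,bertapelle}.
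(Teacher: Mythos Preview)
Your proposal is correct and follows essentially the same route as the paper: both apply $j_*$ and the component-sheaf functor to the sequence $\mathscr{F}_G\xrightarrow{\tau}(h_K)_*(h_K)^*\mathscr{F}_G\xrightarrow{tr}\mathscr{F}_G$, then invoke Proposition~\ref{prop-weilres} to replace the middle term by $\Phi_{G'}$, obtaining $\Phi_G\to\Phi_{G'}\to\Phi_G$ with composite $\cdot\,d$. The only difference is that the paper simply \emph{names} the first arrow $\alpha$ and moves on, whereas you take the extra step of verifying, via the adjunction between base change and Weil restriction and the counit description of the isomorphism in Proposition~\ref{prop-weilres}, that this arrow really agrees with the map induced by $\mathcal{G}\times_R R'\to\mathcal{G}'$; this is a legitimate point the paper leaves implicit, and your unwinding is the right way to check it.
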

\begin{proof}
We consider the sequence
$$\begin{CD}
\mathscr{F}_{G}@>\tau >> (h_K)_*(h_K)^*\mathscr{F}_{G}@>tr>>
\mathscr{F}_{G}
\end{CD}$$
where $\tau$ is the tautological morphism, and $tr$ the trace map.
We know that $tr\circ \tau$ is multiplication by $d$. Applying the
functor $j_*$ and passing to component groups, we obtain a
sequence
$$\begin{CD}
\Phi_{G}@>>> \Phi_{j_*(h_K)_*(h_K)^*\mathscr{F}_{G}}@>>> \Phi_{G}
\end{CD}$$
of abelian sheaves on $(\Spf R)_{sm}$. Applying $h^*$, and using
Proposition \ref{prop-weilres}, this yields a sequence of constant
groups
$$\begin{CD}
\Phi_{G}@>\alpha >> \Phi_{G'}@>\beta>> \Phi_{G}
\end{CD}$$
such that $\beta\circ \alpha$ is multiplication by $d$. The result
follows.
\end{proof}

In the case where $G$ is an abelian variety, Corollary
\ref{cor-killd} is equivalent to Theorem 1 in \cite{ELL}.

\begin{cor}\label{cor-potgood}
Let $G$ be a semi-abelian $K$-variety. Assume that $G$ acquires
good reduction over a finite separable field extension $K'$ of
$K$. Then the torsion part of $\Phi_{G}$ is killed by $d=[K':K]$.
\end{cor}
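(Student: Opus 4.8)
\end{cor}

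\begin{proof}
The plan is to deduce the statement from Corollary~\ref{cor-killd}, once it is known that the component group of a semi-abelian variety with good reduction is torsion-free. By Corollary~\ref{cor-killd}, the canonical morphism $\mathcal{G}\times_R R'\to\mathcal{G}'$ of N\'eron $lft$-models induces a morphism of abelian groups $\alpha\colon\Phi_G\to\Phi_{G'}$ whose kernel is killed by $d=[K':K]$. Granting that $\Phi_{G'}$ is torsion-free, the restriction of $\alpha$ to the torsion subgroup of $\Phi_G$ has torsion source and torsion-free target, hence vanishes; therefore the torsion part of $\Phi_G$ is contained in $\ker(\alpha)$ and is killed by $d$, which is exactly the assertion of the corollary. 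So the whole proof reduces to showing that $\Phi_{G'}$ is torsion-free.

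To prove this, observe that since $G'=G\times_K K'$ has good reduction, its toric part $(G')_{tor}$ has good reduction, i.e.\ is split, and its abelian part $(G')_{ab}$ has good reduction. Consequently the N\'eron model of $(G')_{ab}$ is an abelian $R'$-scheme, so $\Phi_{(G')_{ab}}=0$, while the component group $\Phi_{(G')_{tor}}$ of the split torus $(G')_{tor}$ is a free $\Z$-module of rank $\mathrm{dim}((G')_{tor})$, by the description of the N\'eron $lft$-model of $\mathbb{G}_m$ in \cite[10.1.5]{neron}. Now I would argue exactly as in the proof of Proposition~\ref{prop-uniftor2}, applied to $G'$ itself in the role of the semi-abelian variety $E$ there: this is legitimate because $(G')_{tor}$ is already split, so that here the maximal split subtorus $T_{sp}$ equals $(G')_{tor}$, and $E/T_{sp}=(G')_{ab}$ has potential good reduction. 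One thereby obtains an exact sequence $0\to\Phi_{(G')_{tor}}\to\Phi_{G'}\to\Phi_{(G')_{ab}}$. Since $\Phi_{(G')_{ab}}=0$, this yields a canonical isomorphism $\Phi_{G'}\cong\Phi_{(G')_{tor}}$; in particular $\Phi_{G'}$ is torsion-free, as wanted.

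The substantive point is thus the identification, in the good-reduction case, of $\Phi_{G'}$ with the (free) cocharacter lattice of the toric part of $G'$, and this is where the results of \cite{B-X} (resting ultimately on \cite{neron}) on N\'eron $lft$-models of semi-abelian varieties with split toric part do the work. For the prime-to-$p$ part of $\Phi_{G'}$ one could instead argue directly: good reduction forces $Gal(K^s/K')$ to act trivially on $T_\ell G'$ (Proposition~\ref{prop-goodred}), hence $H^1(Gal(K^s/K'),T_\ell G')\cong\mathrm{Hom}_{\mathrm{cont}}(Gal(K^s/K'),T_\ell G')$ is a torsion-free $\Z_\ell$-module and $(\Phi_{G'})_\ell=0$ by Proposition~\ref{prop-compgr}; it is the potential $p$-primary part of $\Phi_{G'}$ that really needs the structural input above. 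Once $\Phi_{G'}$ is known to be torsion-free, the conclusion follows from Corollary~\ref{cor-killd} as explained in the first paragraph.
\end{proof}
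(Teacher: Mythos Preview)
Your proof is correct and follows essentially the same approach as the paper: reduce via Corollary~\ref{cor-killd} to showing that $\Phi_{G'}$ is torsion-free, then use the Chevalley decomposition of $G'$ together with the results of \cite{B-X} to identify $\Phi_{G'}$ with the free group $\Phi_{(G')_{tor}}$. The only cosmetic difference is that the paper cites \cite[4.11]{B-X} directly for the exact sequence $0\to\Phi_{(G')_{tor}}\to\Phi_{G'}\to\Phi_{(G')_{ab}}\to 0$, whereas you route the same input through the proof of Proposition~\ref{prop-uniftor2}; your final paragraph on the prime-to-$p$ part is an interesting aside but not needed for the argument.
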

\begin{proof}
We put $G'=G\times_K K'$. By Corollary \ref{cor-killd}, it
suffices to show that $\Phi_{G'}$ has no torsion. Denote by
$$0\rightarrow G'_{tor}\rightarrow G'\rightarrow G'_{ab}\rightarrow
0$$ the Chevalley decomposition of $G'$. Then $G'_{tor}$ is a
split torus, and $G'_{ab}$ has good reduction. It follows from
\cite[4.11]{B-X} that the sequence
$$0\rightarrow \Phi_{G'_{tor}}\rightarrow \Phi_{G'}\rightarrow
\Phi_{G'_{ab}}\rightarrow 0$$ is exact. But $\Phi_{G'_{ab}}=0$,
and $\Phi_{G'_{tor}}$ is $\Z$-free.
\end{proof}

\begin{prop}\label{prop-potgood}
Let $G$ be a semi-abelian variety, and let $L$ be a finite
separable extension of $K$ such that $G$ acquires good reduction
over $L$. Let $K'$ be a finite separable extension of $K$ such
that $d=[K':K]$ is prime to $e=[L:K]$. Put $G'=G\times_K K'$, and
consider the morphism
$$\alpha:\Phi_{G}\rightarrow \Phi_{G'}$$ from Corollary
\ref{cor-killd}. Then the following hold:
\begin{enumerate}
\item the morphism $\alpha$ is injective \item if $L$ is a tame
extension of $K$, and $t(G)=0$, then $\alpha$ is an isomorphism
\item if $G$ is a torus and $t(G)=0$, then $\alpha$ is an
isomorphism.
\end{enumerate}
\end{prop}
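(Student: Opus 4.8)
The plan is to dispatch (1) by a direct divisibility argument, and then to reduce (2) and (3) to the equality $\#\Phi_G=\#\Phi_{G'}$, which I would establish by comparing $\ell$-primary parts, using Proposition \ref{prop-compgr} for $\ell\neq p$. Throughout put $G'=G\times_K K'$ and recall from Corollary \ref{cor-killd} the sequence $\Phi_G\xrightarrow{\alpha}\Phi_{G'}\xrightarrow{\beta}\Phi_G$ with $\beta\alpha=d$. After replacing $L$ by the minimal extension of $K$ over which $G$ acquires good reduction --- which is Galois over $K$ and whose degree divides $e$, hence is still prime to $d$ --- I may assume $L/K$ Galois; since $\gcd(d,e)=1$ this forces $K'\cap L=K$, so that restriction induces an isomorphism $Gal(K'L/K')\xrightarrow{\sim}Gal(L/K)$.

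For (1): every element of $\ker\alpha$ is killed by $d$ (Corollary \ref{cor-killd}), hence has finite order, so $\ker\alpha$ lies in the torsion subgroup of $\Phi_G$; and since $G$ acquires good reduction over $L$ with $[L:K]=e$, Corollary \ref{cor-potgood} shows that this torsion subgroup is killed by $e$. As $\gcd(d,e)=1$, it follows that $\ker\alpha=0$.

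For (2) and (3): since $G$ has potential good reduction, Corollary \ref{cor-tornochange} gives $t(G')=t(G)=0$, and then Proposition \ref{prop-uniftor2} (together with the exact sequences of \cite{B-X} used in its proof) shows that $\Phi_G$ and $\Phi_{G'}$ are finite. By (1) it then suffices to prove $\#\Phi_G=\#\Phi_{G'}$. For $\ell\neq p$, Proposition \ref{prop-compgr} identifies $(\Phi_G)_\ell$ with $H^1(I,T_\ell G)_{\mathrm{tors}}$ and $(\Phi_{G'})_\ell$ with $H^1(Gal(K^s/K'),T_\ell G)_{\mathrm{tors}}$. Since $G$ has good reduction over $L$, Proposition \ref{prop-goodred} shows that $Gal(K^s/L)$ acts trivially on $T_\ell G$, so the $I$-action factors through $Gal(L/K)$; the inflation--restriction sequence for $1\to Gal(K^s/L)\to I\to Gal(L/K)\to1$ reads
$$0\to H^1(Gal(L/K),T_\ell G)\to H^1(I,T_\ell G)\to H^1(Gal(K^s/L),T_\ell G)^{Gal(L/K)},$$
and its last term is a subquotient of $\mathrm{Hom}_{\mathrm{cont}}(Gal(K^s/L),T_\ell G)\cong T_\ell G$, hence torsion-free. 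Therefore $H^1(I,T_\ell G)_{\mathrm{tors}}=H^1(Gal(L/K),T_\ell G)$, and the same computation over $K'$, together with the isomorphism $Gal(K'L/K')\cong Gal(L/K)$ identifying the two actions on $T_\ell G$, yields $\#(\Phi_G)_\ell=\#(\Phi_{G'})_\ell$ for every $\ell\neq p$.

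It remains to handle the $p$-primary parts. If $L/K$ is tame --- the hypothesis in (2), and also the tame subcase of (3) --- then $e$ is prime to $p$, so by Corollary \ref{cor-potgood} (applied to $G$ over $L$ and to $G'$ over $K'L$) the groups $\Phi_G$ and $\Phi_{G'}$ are killed by $e$, whence $(\Phi_G)_p=(\Phi_{G'})_p=0$, and we are done. The remaining case is that of (3) with $L/K$ wildly ramified; then $p\mid e$, so $\gcd(d,e)=1$ forces $K'/K$ to be tame. Here the $\ell$-adic shortcut fails for $\ell=p$, and the plan is to invoke the explicit description of the N\'eron $lft$-model of a torus (following \cite{B-X} and \cite{N-X}): $\Phi_G$ is determined by $X(G)$ as a $Gal(L/K)$-module together with the ramification filtration of $L/K$, and this datum is carried isomorphically onto its analogue for $G'$ over $K'$ by $Gal(K'L/K')\cong Gal(L/K)$, because a tame base change leaves the wild ramification of $L/K$ untouched; hence $(\Phi_G)_p\cong(\Phi_{G'})_p$, which finishes (3). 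I expect this last case --- controlling the $p$-primary component group of a wildly ramified torus --- to be the main obstacle: parts (1) and (2) are essentially formal consequences of Corollaries \ref{cor-killd}, \ref{cor-potgood}, \ref{cor-tornochange} and Proposition \ref{prop-compgr}, whereas part (3) requires genuine structural input on N\'eron models of tori, which is the same phenomenon that forces the tameness hypothesis elsewhere in the paper.
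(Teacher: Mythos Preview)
Your arguments for (1) and (2) are correct and coincide with the paper's proof, modulo a minor citation issue: Proposition~\ref{prop-uniftor2} is stated for abelian varieties, not semi-abelian ones. The finiteness of $\Phi_G$ and $\Phi_{G'}$ follows more directly from Proposition~\ref{prop-uniftor} (which shows $t(G)=0$ means $G$ has no split subtorus of type $\mathbb{G}_m$) together with \cite[10.2.1]{neron}.

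For (3), however, there is a genuine gap in your treatment of the $p$-primary part in the wildly ramified case. Your proposed argument appeals to an ``explicit description'' of the N\'eron model of a torus in terms of $X(G)$ and the ramification filtration of $L/K$, but you do not supply a precise statement or reference, and the claim that ``tame base change leaves the wild ramification untouched, hence $(\Phi_G)_p\cong(\Phi_{G'})_p$'' is not a proof. In fact the ramification filtration is a red herring: the paper handles (3) uniformly for all primes at once by invoking B\'egueri's formula \cite[7.2.2]{begueri}, which gives
\[
\phi(G)=\lvert H^1(Gal(L/K),X(G))\rvert,\qquad \phi(G')=\lvert H^1(Gal(K'L/K'),X(G))\rvert,
\]
for an anisotropic torus $G$ (one with $t(G)=0$). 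These orders depend only on the abstract $Gal(L/K)$-module $X(G)$, with no reference to ramification, so the isomorphism $Gal(K'L/K')\xrightarrow{\sim}Gal(L/K)$ finishes the argument immediately. This is precisely the ``structural input on N\'eron models of tori'' you anticipated needing; your $\ell$-adic argument via Proposition~\ref{prop-compgr} is then unnecessary for (3), though it is of course the right tool for (2).
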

\begin{proof}
(1) By Corollary \ref{cor-potgood}, the torsion part of $\Phi_G$
is killed by $e$. Since the kernel of $\alpha$ is killed by $d$,
by Corollary \ref{cor-killd}, and $d$ is prime to $e$, the kernel
of $\alpha$ must be trivial.

(2) Since $t(G)=t(G')=0$ by Corollary \ref{cor-tornochange}, we
know by Proposition \ref{prop-uniftor}  that $G$ and $G'$ do not
admit a subgroup of type $\mathbb{G}_{m,k}$, so that the groups
$\Phi_{G}$ and $\Phi_{G'}$ are finite \cite[10.2.1]{neron}.
 By (1), it suffices to show that $\Phi_G$
and $\Phi_{G'}$ have the same cardinality.

Since $\Phi_G$ and $\Phi_{G'}$ are killed by $e=[L:K]$, and $L$ is
a tame extension of $K$, the values $\phi(G)$ and $\phi(G')$ are
prime to $p$. Therefore, it is enough to prove that
$\phi(G)_q=\phi(G')_q$ for each prime $q\neq p$, where $\phi(G)_q$
denotes the $q$-primary part of $\phi(G)$.

If we put $I'=Gal(K^s/K')$, then
\begin{eqnarray*}
\phi(G)_q&=&|H^1(I,T_qG)_{tors}|
\\ \phi(G')_q&=&|H^1(I',T_qG)_{tors}|
\end{eqnarray*}
by Proposition \ref{prop-compgr}. If we put $I''=Gal(K^s/L)$, then
$I''$ acts trivially on $T_q G$, because $G$ has potential good
reduction (Proposition \ref{prop-goodred}). Since $T_q G$ is
torsion-free, the inflation morphisms $$\begin{array}{c}
H^1(Gal(L/K),T_qG)\rightarrow H^1(I,T_q G)
\\ H^1(Gal(K'L/K'),T_qG)\rightarrow H^1(I',T_q G)
\end{array}$$
are isomorphisms \cite[VII.6.Prop.\,4]{serrelocaux}. Since $[L:K]$
is prime to $[K':K]$, the restriction morphism
$$res:Gal(K'L/K')\rightarrow Gal(L/K)$$ is an isomorphism.
It follows that
$$H^1(I,T_qG)\cong H^1(I',T_q G)$$

(3) As in (2), it suffices to show that $\phi(G)=\phi(G')$. Denote
by $X(G)$ the character module of $G$. It follows from
\cite[7.2.2]{begueri} that
\begin{eqnarray*}
\phi(G)&=&|H^1(Gal(L/K),X(G))|
\\ \phi(G')&=&|H^1(Gal(K'L/K'),X(G))|
\end{eqnarray*}
Since the restriction morphism $res:Gal(K'L/K')\rightarrow
Gal(L/K)$ is an isomorphism, we find
$$\phi(G)=\phi(G')$$
\end{proof}

\begin{cor}\label{cor-potgood3}
Let $G$ be a semi-abelian variety, and let $L$ be a finite
separable extension of $K$ such that $G$ acquires good reduction
over $L$. Let $K'$ be a finite separable extension of $K$ such
that $d=[K':K]$ is prime to $e=[L:K]$. Put $G'=G\times_K K'$, and
consider the morphism
$$\alpha:\Phi_{G}\rightarrow \Phi_{G'}$$ from Corollary
\ref{cor-killd}. Then the following hold:
\begin{enumerate}
\item if $L$ is a tame extension of $K$, then
$$|coker(\alpha)|=d^{t(G)}$$
\item if $G$ is a torus, then
$$|coker(\alpha)|=d^{t(G)}$$
\end{enumerate}
\end{cor}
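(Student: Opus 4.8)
The plan is to reduce to the case $t(G)=0$, which is settled by Proposition \ref{prop-potgood}, by dividing out the maximal split subtorus. Let $T_{sp}$ be the maximal split subtorus of $G$ and set $H=G/T_{sp}$. Then $H$ is again a semi-abelian $K$-variety (a torus if $G$ is one), whose maximal split subtorus is trivial, so $t(H)=0$ by Proposition \ref{prop-uniftor} (which applies since $H$, being a quotient of $G$, still has potential good reduction). Because $d=[K':K]$ is prime to $e=[L:K]$, the argument used in the proof of Corollary \ref{cor-tornochange} shows that $T'_{sp}:=T_{sp}\times_K K'$ is the maximal split subtorus of $G'$; since formation of quotients commutes with flat base change, $G'/T'_{sp}=H\times_K K'=:H'$. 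As $T_{sp}$ and $T'_{sp}$ are split we have $R^1j_*\mathscr{F}_{T_{sp}}=0$ \cite[4.2]{B-X}, so by \cite[4.11]{B-X} the sequences
$$0\to \Phi_{T_{sp}}\to \Phi_G\to \Phi_H\to 0,\qquad 0\to \Phi_{T'_{sp}}\to \Phi_{G'}\to \Phi_{H'}\to 0$$
are exact; by functoriality of the base-change morphism $\alpha$ of Corollary \ref{cor-killd}, they fit into a commutative ladder with vertical maps $\alpha_{T_{sp}}$, $\alpha=\alpha_G$ and $\alpha_H$.

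Next I claim that $\alpha_H$ is an isomorphism. In case (1), $L$ is a tame extension of $K$ and $t(H)=0$; moreover $H$ acquires good reduction over $L$, since the exact sequence of $\ell$-adic Tate modules $0\to T_\ell T_{sp}\to T_\ell G\to T_\ell H\to 0$ (for $\ell\neq p$) shows that the $Gal(K^s/L)$-action on $T_\ell H$ is trivial because it is trivial on $T_\ell G$ (the latter by Proposition \ref{prop-goodred}, as $G$ has good reduction over $L$), and then Proposition \ref{prop-goodred} gives that $H$ has good reduction over $L$. Hence Proposition \ref{prop-potgood}(2) applies to $H$ and yields that $\alpha_H$ is an isomorphism. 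In case (2), $H$ is a torus with $t(H)=0$, and the same conclusion follows from Proposition \ref{prop-potgood}(3).

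Now apply Lemma \ref{lemma-linalg} to the ladder above, after padding both rows on the left with a zero term, so that the two outer vertical maps become the isomorphism $0\to 0$ and the isomorphism $\alpha_H$. We deduce that $\alpha_{T_{sp}}$ and $\alpha$ have isomorphic kernels and isomorphic cokernels; in particular $\mathrm{coker}(\alpha)\cong\mathrm{coker}(\alpha_{T_{sp}})$ (and $\alpha$ is injective, recovering Proposition \ref{prop-potgood}(1)). It remains to compute $\mathrm{coker}(\alpha_{T_{sp}})$. Identifying $T_{sp}$ with $\mathbb{G}_{m,K}^{t(G)}$, the morphism $\alpha_{T_{sp}}$ is the $t(G)$-fold direct sum of the base-change morphism $\Phi_{\mathbb{G}_{m,K}}\to\Phi_{\mathbb{G}_{m,K'}}$, which by the explicit description of the N\'eron $lft$-model of $\mathbb{G}_{m,K}$ in \cite[10.1.5]{neron} is multiplication by $d$ on $\mathbb{Z}$. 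Hence $\mathrm{coker}(\alpha_{T_{sp}})\cong(\mathbb{Z}/d\mathbb{Z})^{t(G)}$, so $|\mathrm{coker}(\alpha)|=d^{t(G)}$ in both cases.

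The main obstacle is the base-change bookkeeping around the maximal split subtorus: one needs to know that $T_{sp}\times_K K'$ remains the maximal split subtorus of $G'$ — this is exactly where the coprimality of $d$ and $e$ enters — and that the component-group sequence stays short exact after base change. Granting this, the statement reduces formally (via Lemma \ref{lemma-linalg} and Proposition \ref{prop-potgood}) to the elementary computation of the component group of the N\'eron $lft$-model of $\mathbb{G}_{m,K}$ and its behaviour under base change.
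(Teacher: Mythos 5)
Your proof is correct and follows essentially the same route as the paper's: decompose via the maximal split subtorus $T_{sp}$, use the short exact sequence of component groups from \cite[4.11]{B-X}, show the induced map on $\Phi_{G/T_{sp}}\to\Phi_{G'/(T_{sp})'}$ is an isomorphism via Propositions \ref{prop-uniftor} and \ref{prop-potgood}, apply Lemma \ref{lemma-linalg}, and compute the cokernel on the $T_{sp}$-part from the explicit N\'eron $lft$-model of $\mathbb{G}_{m,K}$. You spell out some steps the paper leaves implicit (that $H=G/T_{sp}$ acquires good reduction over $L$, and that $(T_{sp})'$ is the maximal split subtorus of $G'$), but the argument is the same.
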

\begin{proof}
Denote by $T_{sp}$ the maximal split subtorus of $G$. By the proof
of \cite[4.11]{B-X}, the morphism $\alpha$ fits into a morphism of
short exact sequences
$$\begin{CD}
0@>>> \Phi_{T_{sp}} @>>> \Phi_G @>>> \Phi_{G/T_{sp}}@>>> 0
\\ @. @V\beta VV @V\alpha VV @VV\gamma V @.
\\ 0@>>> \Phi_{(T_{sp})'} @>>> \Phi_{G'} @>>> \Phi_{G'/(T_{sp})'}@>>> 0
\end{CD}$$
where $(\cdot)'$ denotes base change to $K'$.
Under the hypotheses of (1) or (2), the morphism $\gamma$ is an
isomorphism, by Propositions \ref{prop-uniftor} and
\ref{prop-potgood}. By Lemma \ref{lemma-linalg}, we get
$$|coker(\alpha)|=|coker(\beta)|$$
It follows from the description of the N\'eron model of
$\mathbb{G}_{m,K}$ in \cite[10.1.5]{neron} that
$$|coker(\beta)|=d^{\mathrm{dim}(T_{sp})}=d^{t(G)}$$
where the equality $\mathrm{dim}(T_{sp})=t(G)$ follows from
 Proposition \ref{prop-uniftor}.
\end{proof}

%
%

The key result of the present paper is the following theorem.
\begin{theorem}\label{thm-main}
Let $A$ be an abelian $K$-variety, with N\'eron model
$\mathcal{A}$, and denote by $t(A)$ the reductive rank of
$\mathcal{A}_s^o$. We denote by $e\in \Z_{>0}$ the degree of the
minimal extension of $K$ where $A$ acquires semi-abelian
reduction.

Let $K'$ be a finite separable extension of $K$, and put
$d=[K':K]$. If $d$ is prime to $e$, then the natural map
$$\Phi_{A}\rightarrow \Phi_{A\times_K K'}$$ is injective. If $A$ is tamely ramified, or $A$ has potential purely
multiplicative reduction, then
$$\phi(A\times_K K')=d^{t(A)}\phi(A)$$
\end{theorem}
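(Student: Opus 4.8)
The plan is to reduce both assertions to the corresponding statements for the semi-abelian variety $E$ occurring in a rigid uniformization of $A$, for which everything has already been established in Section~\ref{sec-main}, and then to transport the information back to $A$ along the uniformization exact sequence, using the purely formal Lemma~\ref{lemma-linalg} as the transfer mechanism.

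First I would fix a rigid uniformization $0\to T\to E\to B\to 0$, $0\to M^{an}\to E^{an}\to A^{an}\to 0$ as in Section~\ref{subsec-rigunif}, with $L$ the minimal extension over which $A$ acquires semi-abelian reduction and $e=[L:K]$. Since $\mathcal{E}_s^o\cong\mathcal{A}_s^o$ by \cite[2.3]{B-X}, we have $t(E)=t(A)$; moreover $B$ has potential good reduction, hence so does $E$ by Proposition~\ref{prop-goodred}, and $E$ already acquires good reduction over $L$ (it has semi-abelian reduction there because $\mathcal{A}_s^o$ does, and it is potentially good, so Proposition~\ref{prop-goodred} applies). Next I would record, from \cite[4.4+9+11+12]{B-X} (compare the proof of Proposition~\ref{prop-uniftor2}), that the uniformization morphism $E^{an}\to A^{an}$ gives rise to an exact sequence of abelian groups
\[ 0\longrightarrow M^{I}\longrightarrow \Phi_{E}\longrightarrow \Phi_{A}\longrightarrow C\longrightarrow 0, \]
functorial in finite separable base change $K\to K'$, in which the cokernel term $C$ is finite and is built functorially out of $M$ as a discrete $I$-module (concretely $C\cong H^{1}(I,M)$), and in which the arrow induced on the $\Phi_{A}$-terms is the canonical comparison morphism $\Phi_{A}\to\Phi_{A'}$ coming from $\mathcal{A}\times_R R'\to\mathcal{A}'$.

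Now both $M$ and $T$ split over $L$, and $d=[K':K]$ is prime to $e=[L:K]$, so exactly as in the proof of Corollary~\ref{cor-tornochange} the restriction map $Gal(K'L/K')\to Gal(L/K)$ is an isomorphism; since $Gal(K^s/L)$ acts trivially on $M$, inflation identifies $H^{1}(I,M)$ with $H^{1}(Gal(L/K),M)$ and similarly for $I'=Gal(K^s/K')$, so the base-change maps $M^{I}\to (M')^{I'}$ and $C\to C'$ are \emph{isomorphisms}. Feeding the diagram formed by the above four-term sequence for $K$ and for $K'$ into Lemma~\ref{lemma-linalg} yields canonical isomorphisms $ker(\Phi_{A}\to\Phi_{A'})\cong ker(\Phi_{E}\to\Phi_{E'})$ and $coker(\Phi_{A}\to\Phi_{A'})\cong coker(\Phi_{E}\to\Phi_{E'})$. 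The first assertion of the theorem then follows because $\Phi_{E}\to\Phi_{E'}$ is injective: this is Proposition~\ref{prop-potgood}(1) applied to $E$, which acquires good reduction over $L$ with $d$ prime to $[L:K]$ (no tameness needed). For the second assertion: if $A$ is tamely ramified then $L/K$ is tame and Corollary~\ref{cor-potgood3}(1) applied to $E$ gives $|coker(\Phi_{E}\to\Phi_{E'})|=d^{t(E)}=d^{t(A)}$; if $A$ has potential purely multiplicative reduction then $t_{pot}(A)=\dim A=\dim E$ forces $B=0$, so $E=T$ is a torus and Corollary~\ref{cor-potgood3}(2) gives the same value. Since $\Phi_A$ and $\Phi_{A'}$ are finite \cite[10.2.1]{neron} and $\Phi_A\to\Phi_{A'}$ is injective, this gives $\phi(A')=|coker(\Phi_A\to\Phi_{A'})|\cdot\phi(A)=d^{t(A)}\phi(A)$.

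I expect the main obstacle to be the input from \cite{B-X}: one must extract from the sheaf-theoretic formalism of \emph{loc.\ cit.} the precise four-term exact sequence together with its functoriality under base change — in particular the fact that the cokernel $C$ is computed compatibly in the variables $K$ and $K'$ (which is what reduces its invariance to the splitting of $M$ over $L$), and the identification of the induced arrow with the canonical comparison map of Néron models. Everything after that is the elementary diagram chase packaged in Lemma~\ref{lemma-linalg}, combined with Proposition~\ref{prop-potgood}, Corollary~\ref{cor-potgood3} and Proposition~\ref{prop-uniftor}, all already available for semi-abelian varieties with potential good reduction.
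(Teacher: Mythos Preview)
Your strategy is exactly the paper's: transfer the question from $A$ to $E$ via the uniformization four-term sequence and Lemma~\ref{lemma-linalg}, then invoke Proposition~\ref{prop-potgood} and Corollary~\ref{cor-potgood3} for $E$. The identification $t(E)=t(A)$, the reduction of $E$ over $L$, the Galois-cohomology computation of $M^{I}=M^{I'}$ and $H^{1}(I,M)\cong H^{1}(I',M)$, and the split into the tame case versus the torus case are all as in the paper.

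There is, however, one genuine gap. You assert that the cokernel $C$ of $\Phi_E\to\Phi_A$ is \emph{all} of $H^{1}(I,M)$, so that the base-change map $C\to C'$ is automatically the isomorphism $\alpha_4\colon H^{1}(I,M)\to H^{1}(I',M)$. The references \cite[4.4+9+11+12]{B-X} only yield the exact sequence
\[
0\longrightarrow M^{I}\longrightarrow \Phi_E\longrightarrow \Phi_A\xrightarrow{\ \widetilde{\gamma}_1\ } H^{1}(I,M),
\]
i.e.\ $C=\mathrm{im}(\widetilde{\gamma}_1)\subset H^{1}(I,M)$, not surjectivity of $\widetilde{\gamma}_1$. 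This is precisely why the paper does not take your shortcut: instead it proves (Claim~3) that $\alpha_4$ carries $\mathrm{im}(\widetilde{\gamma}_1)$ onto $\mathrm{im}(\widetilde{\gamma}_2)$, and the argument uses the \emph{trace map} of Section~\ref{subsec-trace} in an essential way (it produces a map $\widetilde{\beta}_3\colon\Phi_{A'}\to\Phi_A$ with $\widetilde{\beta}_3\circ\widetilde{\alpha}_3$ equal to multiplication by $d$, which is an automorphism of the $e$-torsion group $H^{1}(I,M)$ since $\gcd(d,e)=1$). You never invoke the trace map, so your argument as written does not establish that $C\to C'$ is surjective.

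For the first assertion (injectivity of $\Phi_A\to\Phi_{A'}$) this does not matter: the induced map $C\to C'$ is the restriction of the injective $\alpha_4$, hence injective, and a direct diagram chase (or the snake lemma) already gives $\ker(\widetilde{\alpha}_2)\cong\ker(\widetilde{\alpha}_3)$ from injectivity of $\delta$ alone. But for the second assertion you need $\delta$ to be an isomorphism to conclude $\mathrm{coker}(\widetilde{\alpha}_2)\cong\mathrm{coker}(\widetilde{\alpha}_3)$, and that is exactly what Claim~3 supplies. So either justify the surjectivity $\Phi_A\twoheadrightarrow H^{1}(I,M)$ by a precise citation (it is not among the results you list), or insert the trace-map argument of Claim~3.
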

\begin{proof}
We fix an embedding of $K'$ in $K^s$. We'll use the notation from
Section \ref{subsec-rigunif}. The short exact sequence of rigid
$K$-groups
$$0\rightarrow M^{an}\rightarrow E^{an}\rightarrow A^{an}\rightarrow 0$$
gives rise to an exact sequence
$$0\rightarrow \mathscr{F}_M\rightarrow \mathscr{F}_E\rightarrow \mathscr{F}_A\rightarrow 0$$
of abelian sheaves on $(\Sp K)_{sm}$ (right exactness of this
sequence follows from the smoothness of $E^{an}\rightarrow
A^{an}$). By Lemma \ref{lemm-exact} we find a commutative diagram
with exact rows
$$\begin{CD}
0@>>> \mathscr{F}_M@>>> \mathscr{F}_E@>>> \mathscr{F}_A @>>> 0
\\ @. @V\tau VV @V\tau VV @VV\tau V @.
\\0@>>> (h_K)_* \mathscr{F}_{M'}@>>> (h_K)_* \mathscr{F}_{E'}@>>> (h_K)_* \mathscr{F}_{A'} @>>> 0
\\ @. @VtrVV @VtrVV @VVtrV @.
\\0@>>> \mathscr{F}_M@>>> \mathscr{F}_E@>>> \mathscr{F}_A @>>> 0
\end{CD}$$
where $\tau$ is the tautological morphism, and $tr$ is the trace
map (Section \ref{subsec-trace}). We know that $tr\circ \tau$ is
multiplication by $d$.

We put $I'=Gal(K^s/K')$. Applying the functor $j_*$, and using
\cite[4.4]{B-X}, we get a commutative diagram with exact rows
$$\begin{CD}
0@>>> M^I@>>> j_*\mathscr{F}_E@>>> j_*\mathscr{F}_A @>\gamma_1 >>
H^1(I,M)
\\ @. @V\alpha_1 VV @V\alpha_2 VV @VV\alpha_3 V @VV\alpha_4 V
\\0@>>> M^{I'}@>>> j_*(h_K)_* \mathscr{F}_{E'}@>>> j_*(h_K)_* \mathscr{F}_{A'} @>\gamma_2 >>
H^1(I',M)
\\ @. @V\beta_1 VV @V\beta_2 VV @VV\beta_3 V @VV\beta_4 V
\\0@>>> M^I@>>> j_*\mathscr{F}_E@>>> j_*\mathscr{F}_A
@>>> H^1(I,M)
\end{CD}$$
where $\beta_i \circ \alpha_i$ is multiplication by $d$, for
$i=1,\ldots,4$. Here we view $M$ as a discrete $I$-module, and the
objects in the second and fifth columns as constant abelian
sheaves on $(\Spf R)_{sm}$.

\bigskip
\textit{Claim 1: $\alpha_1$ is an isomorphism.} Since $A$ acquires
semi-abelian reduction over $L$, the $I$-action on $M$ factors
through $Gal(L/K)$. Since $L$ and $K'$ are linearly disjoint over
$K$, the restriction morphism $$res:Gal(K'L/K')\rightarrow
Gal(L/K)$$ is an isomorphism, and $M^I=M^{I'}$.

\bigskip
\textit{Claim 2: $\alpha_4$ and $\beta_4$ are isomorphisms.}
Consider the commutative diagram
$$\begin{CD}
H^1(I,M)@>\alpha_4>> H^1(I',M)
\\ @Ainf AA @AA inf' A
\\ H^1(Gal(L/K),M)@>\sim>> H^1(Gal(K'L/K'),M)
\end{CD}$$
where the vertical arrows are the inflation morphisms, and the
lower horizontal isomorphism is induced by the isomorphism
$$res:Gal(K'L/K')\rightarrow Gal(L/K)$$
Since $Gal(K^s/L)$ acts trivially on $M$, and $M$ is torsion-free,
the morphisms $inf$ and $inf'$ are isomorphisms
\cite[VII.6.Prop.\,4]{serrelocaux}, so that $\alpha_4$ is an
isomorphism, too. The isomorphism $inf$ shows that $H^1(I,M)$ is
killed by $e$.  Since $d$ is prime to $e$, the composition
$\beta_4\circ \alpha_4$, and hence the morphism $\beta_4$, are
isomorphisms.

\bigskip
Now we pass to component groups. Using \cite[4.12]{B-X} and
Proposition \ref{prop-weilres}, we find a commutative diagram of
constant groups, with exact rows
$$\begin{CD}
0@>>> M^I @>>> \Phi_E@>>> \Phi_A@>\widetilde{\gamma}_1>> H^1(I,M)
\\ @. @V\alpha_1 V\wr V @V\widetilde{\alpha}_2 VV @V\widetilde{\alpha}_3 VV @V\wr V\alpha_4 V
\\ 0@>>> M^{I'}@>>> \Phi_{E'}@>>> \Phi_{A'}@>\widetilde{\gamma}_2>>
H^1(I',M)
\\ @. @. @. @V\widetilde{\beta}_3VV @V\wr V\beta_4 V
\\ @. @. @. \Phi_A@>\widetilde{\gamma}_1>> H^1(I,M)
\end{CD}$$
Since $\widetilde{\alpha}_2$ is injective by Corollary
\ref{cor-potgood3}, a diagram chase shows that
$\widetilde{\alpha}_3$ is injective. Hence, from now on, we may
assume that $A$ is tamely ramified (so that $E$ is tamely
ramified) or $A$ has potential purely multiplicative reduction (so
that $E$ is a torus).

\bigskip
\textit{Claim 3: the isomorphism $\alpha_4$ identifies the images
of $\widetilde{\gamma}_1$ and $\widetilde{\gamma}_2$.} We denote
by $D_A$ and $D'_A$ the subgroups of $\Phi_A$, resp. $\Phi_{A'}$,
consisting of the elements that are killed by a power of $d$.
Since $H^1(I,M)$ and $H^1(I',M)$ are killed by $e$, the morphisms
$\widetilde{\gamma}_1$ and $\widetilde{\gamma}_2$ are trivial on
$D_A$, resp. $D'_A$, and we obtain a commutative diagram
$$\begin{CD}
\Phi_A/D_A@>\overline{\gamma}_1>> H^1(I,M)
\\@V\overline{\alpha}_3VV @V\wr V\alpha_4 V
\\ \Phi_{A'}/D_{A'}@>\overline{\gamma}_2>> H^1(I',M)
\\@V\overline{\beta}_3VV @V\wr V\beta_4 V
\\ \Phi_A/D_A@>\overline{\gamma}_1>> H^1(I,M)
\end{CD}$$
Since $\overline{\beta}_3\circ \overline{\alpha}_3$ is
multiplication by $d$, it is injective, and hence surjective
because $\Phi_A$ is finite. A diagram chase shows that $\alpha_4$
identifies the images of $\overline{\gamma}_1$ and
$\overline{\gamma}_2$.

\bigskip
By Lemma \ref{lemma-linalg}, we know that
 $coker(\widetilde{\alpha}_2)\cong coker(\widetilde{\alpha}_3)$.
Therefore, it suffices to prove that
$$|coker(\widetilde{\alpha}_2)|=d^{t(A)}$$
This follows from Corollary \ref{cor-potgood3}.
\end{proof}


\begin{remarks}
(1) Theorem \ref{thm-main} remains valid if we replace the general
assumption that $K$ is complete, by the assumption that $K$ is
strictly Henselian. It suffices to note that $\phi(A)$ is
invariant under base change to the completion $\widehat{K}$
\cite[7.2.1]{neron}.

(2) The second part of Theorem \ref{thm-main} does not hold for
arbitrary wildly ramified abelian $K$-varieties. A counterexample
is provided in Example \ref{example-elliptic1} below. Note however
that Corollary \ref{cor-potgood3}(3), which is the analog of
Theorem \ref{thm-main} for tori, does not require any tameness
assumption.



(3) It seems plausible that Theorem \ref{thm-main} holds also for
 tamely ramified semi-abelian $K$-varieties $G$, in the
following form: the map
$$\Phi_G\rightarrow \Phi_{G\times_K K'}$$ is injective, and its
cokernel has cardinality $d^{t(G)}$.

(4) From the obvious sequence of maps
$$ T^{an}_{sp} \to T^{an} \to E^{an} \to A^{an} $$
one derives a sequence of component groups
$$ \Phi_{T_{sp}} \to \Phi_T \to \Phi_E \to \Phi_A $$
Taking images in $ \Phi_A $ one obtains, in a canonical way, a filtration
$$ 0=\Sigma_4\subset \Sigma_3 \subset \Sigma_2 \subset \Sigma_1 \subset \Phi_A $$
which was introduced in \cite[\S\,5]{B-X}.

Let us assume that $A$ acquires semi-abelian reduction over a tame
extension of $K$ of degree $e$. Consider a tame extension $K'/K$
of degree $d$, with $d$ prime to $e$. If we put $A'=A\times_K K'$
and denote by $\Sigma'_{\bullet}$ the filtration on $\Phi_{A'}$,
we get a commutative diagram
$$
\begin{CD}
0=\Sigma_4@>>> \Sigma_3 @>>> \Sigma_2 @>>> \Sigma_1 @>>> \Sigma_0=\Phi_A \\
@. @V\gamma_3VV @V\gamma_2 VV @V\gamma_1 VV @VVV
\\0=\Sigma'_4@>>> \Sigma'_3 @>>> \Sigma'_2 @>>> \Sigma'_1 @>>> \Sigma'_0=\Phi_{A'}
\end{CD}
$$
where all maps are injections. For all $ 0 \leq i < j \leq 4 $, we
denote by $$\gamma_{i,j} : \Sigma_i/\Sigma_j \to
\Sigma_i'/\Sigma_j' $$ the map induced by $\gamma_i$. Then one can
show that $ |coker(\gamma_3)| = d^{t(A)} $, and that $
\gamma_{i,j} $ is an isomorphism whenever $ 0 \leq i < j \leq 3 $.
\end{remarks}

\begin{example}\label{example-elliptic1}
In this example we take $R$ to be the Witt vectors $W(k)$, with
$k$ an algebraically closed field of characteristic $2$. Let $C$
be the elliptic $K$-curve with Weierstrass equation
$$ y^2 = x^3 + 2 $$
Recall that $2$ is a uniformizing parameter for $R$.

It is easily computed, using Tate's algorithm, that $C$ has
reduction type $II$ over $R$. Consider $L = K(2^{1/2})$, which is
a wild Kummer extension of $K$ with $[L:K] = 2$. Then $C\times_K
L$ has good reduction.

On the other hand, let $K(3) := K(2^{1/3})$, which is a tame
extension of $K$ with $[K(3):K] = 3$. One checks that $C \times_K
K(3)$ has reduction type $I_0^*$ over $R(3)$. In particular,
$$ 0 = \Phi_C \neq \Phi_{C \times_K K(3)} = (\mathbb{Z}/2\mathbb{Z})^2 $$
We conclude that the second statement in Theorem \ref{thm-main}
does not hold for $C$. Nevertheless, we will see in Lemma
\ref{lemma-wildelliptic} that Theorem \ref{thm-main} holds for all
elliptic curves over $K$ if we replace $e$ by another invariant of
the curve.
\end{example}

\begin{example}\label{example-elliptic2}
Let $C$ be a tamely ramified elliptic curve over $K$ with additive
reduction. In view of Theorem \ref{thm-main}, one might be tempted
to think that $C$ and $C\times_K K'$ have the same reduction type
if $K'$ is a finite extension of $K$ such that $[K':K]$ is prime
to the degree of the minimal extension of $K$ where $C$ acquires
semi-abelian reduction. However, this is not the case.

 Consider, for instance, the elliptic $K$-curve $C$ with
Weierstrass equation
$$ y^2 = x^3 + \pi^4 $$
where $\pi$ is a uniformizer in $R$. We assume that $p > 3$. In
particular, $C$ is tamely ramified.  The minimal extension of $K$
where $C$ acquires semi-abelian reduction is $K(3)$.

Using Tate's algorithm,  one computes that $C$ has reduction type
$IV^*$ over $R$, and that for each $n\in \N'$ such that
 $n \equiv 2$ modulo $6$, the elliptic curve $C \times_K K(n)$ has reduction
type $ IV $ over $R(n)$.
\end{example}

\section{Rationality of the N\'eron component series}\label{sec-ratcomp}

\subsection{Rationality of the component series for tamely
ramified abelian varieties}
\begin{lemma}\label{lemma-res}
Let $P(t)=\sum_{i>0}p_i t^i$ and $Q(t)=\sum_{i>0}q_i t^i$ be
non-zero power series in $\Z[[t]]$ such that $p_i,\,q_i\geq 0$ for
all $i>0$.
 Assume that $P(t)$ and $Q(t)$ converge on the open complex
unit disc $D$, and that they have a pole of order $m_P$, resp.
$m_Q$, at $t=1$. Then $P(t)+Q(t)$ has a pole of order
$\max\{m_P,m_Q\}$ at $t=1$.
\end{lemma}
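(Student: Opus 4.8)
The plan is to work analytically near $t=1$. Recall that, for a function meromorphic near $t=1$, having a pole of order \emph{exactly} $m$ at $t=1$ means precisely that $(1-t)^m$ times the function extends holomorphically across $t=1$ with a nonzero value there. Since $P+Q$ is holomorphic on the open unit disc $D$ and meromorphic at $t=1$, it has a well-defined pole order there, and it suffices to pin it down. Without loss of generality I would assume $m_P\geq m_Q$, so that $\max\{m_P,m_Q\}=m_P$, and then show that $(1-t)^{m_P}\bigl(P(t)+Q(t)\bigr)$ extends holomorphically across $t=1$ and takes a strictly positive real value there.

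First I would establish the holomorphy, which gives the upper bound $\leq m_P$ on the pole order. By hypothesis $(1-t)^{m_P}P(t)$ is holomorphic at $t=1$. Moreover $(1-t)^{m_P}Q(t)=(1-t)^{m_P-m_Q}\cdot\bigl((1-t)^{m_Q}Q(t)\bigr)$ is the product of the \emph{polynomial} $(1-t)^{m_P-m_Q}$ (here $m_P-m_Q\geq 0$ is used) with a function holomorphic at $t=1$, hence is itself holomorphic at $t=1$. Adding, $(1-t)^{m_P}\bigl(P(t)+Q(t)\bigr)$ is holomorphic at $t=1$, so $P+Q$ has a pole of order at most $m_P$ there.

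The key step, where the positivity hypothesis enters, is to show this holomorphic extension does not vanish at $t=1$. Let $c_P$ be the value at $t=1$ of the holomorphic function $(1-t)^{m_P}P(t)$; by continuity $c_P=\lim_{t\to 1^-}(1-t)^{m_P}P(t)$ along the real axis. Since all $p_i\geq 0$ we have $(1-t)^{m_P}P(t)\geq 0$ for $t\in(0,1)$, so $c_P\geq 0$; and $c_P\neq 0$ because the pole of $P$ at $t=1$ has order exactly $m_P$. Hence $c_P>0$, and likewise $c_Q:=\lim_{t\to 1^-}(1-t)^{m_Q}Q(t)>0$. Evaluating at $t=1$, the value of $(1-t)^{m_P}\bigl(P(t)+Q(t)\bigr)$ is $c_P$ if $m_P>m_Q$, because the second summand contributes $(1-t)^{m_P-m_Q}(1-t)^{m_Q}Q(t)\to 0\cdot c_Q=0$; and it is $c_P+c_Q$ if $m_P=m_Q$. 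In either case it is a strictly positive real number, in particular nonzero, so $P+Q$ has a pole of order exactly $m_P=\max\{m_P,m_Q\}$ at $t=1$.

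The argument is essentially routine, so I do not anticipate a serious obstacle; the one point deserving care is that it is the nonnegativity of the coefficients that upgrades ``$c_P\neq 0$'' to ``$c_P>0$'', and this is exactly what rules out a cancellation of leading terms in the case $m_P=m_Q$ (for general coefficients one could have $c_P+c_Q=0$, and the conclusion would fail). One should also note that the passage from the real-axis limit to the value of the holomorphic extension at $t=1$ is justified by continuity of that extension, and that it is essential to have factored out the \emph{larger} power $m_P$, so that $(1-t)^{m_P-m_Q}$ is a genuine polynomial rather than something introducing a new pole.
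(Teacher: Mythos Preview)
Your proof is correct and follows essentially the same approach as the paper's: both exploit that nonnegative coefficients force $P(t)\geq 0$ on $(0,1)$, hence the leading Laurent coefficient at $t=1$ (your $c_P$, the paper's $(-1)^{m_P}\rho_P$) is strictly positive, ruling out cancellation. The only cosmetic difference is that the paper immediately reduces to the case $m_P=m_Q$ (the case $m_P>m_Q$ being trivial) and phrases the conclusion as ``the residues have the same sign'', whereas you treat both cases uniformly by factoring out $(1-t)^{m_P}$.
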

\begin{proof}
 We may assume that $m_P=m_Q=:m$. It suffices to show that the
residues of $P(t)$ and $Q(t)$ at $t=1$ have the same sign. We
denote these residues by $\rho_P$ and $\rho_Q$, respectively. Let
$(t_n)_{n\geq 0}$ be a series in $]0,1[$ that converges to $1$.
Since the coefficients of $P(t)$ are positive, $P(t)$ takes
positive values on $]0,1[$, so that
$$(-1)^{m}\rho_P=\lim_{n\to \infty}(1-t_n)^m P(t_n)>0$$
Likewise, $(-1)^m\rho_Q>0$.
%
\end{proof}

\begin{lemma}\label{lemma-semipole}
For each element $a$ of $\N$, the series
$$\psi_a(T)=\sum_{d>0}d^aT^d$$ belongs to
$$\Z\left[T,\frac{1}{T-1}\right]$$ It has degree zero if $a=0$ and degree $<0$ else. It has a
pole of order $a+1$ at $T=1$, whose residue equals $(-1)^{a+1}a!$.
\end{lemma}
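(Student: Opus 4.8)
The plan is to prove everything by a direct manipulation of the generating function, starting from the geometric series and differentiating. First I would recall the identity $\sum_{d>0}T^d = T/(1-T)$, which is the case $a=0$: it lies in $\Z[T,1/(T-1)]$, has degree zero (numerator and denominator both degree one), and has a simple pole at $T=1$ with residue $-1 = (-1)^{0+1}\cdot 0!$. This anchors the induction. For the inductive step, observe that applying the operator $T\frac{d}{dT}$ to $\psi_a(T)=\sum_{d>0}d^aT^d$ yields $\sum_{d>0}d^{a+1}T^d=\psi_{a+1}(T)$. Since $\Z[T,1/(T-1)]$ is stable under $T\frac{d}{dT}$ (the derivative of $1/(T-1)$ is $-1/(T-1)^2$, and $T$ times that is still in the ring), membership in $\Z[T,1/(T-1)]$ follows by induction on $a$.

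Next I would track the degree and the pole order under this operator. If $f\in\Z[T,1/(T-1)]$ has a pole of order $m\ge 1$ at $T=1$, write $f(T)=c(T-1)^{-m}+(\text{lower-order poles and polynomial part})$; then $T\frac{d}{dT}f$ has leading term $-mc\,T(T-1)^{-m-1}$, whose value at $T=1$ is $-mc$, so the pole order goes up by exactly one and the residue is multiplied by $-m$ when $m$ is the old order (more precisely: $\mathrm{Res}_{T=1}$ picks out the coefficient of $(T-1)^{-1}$, but the relevant statement is about the leading coefficient $c$ of $(T-1)^{-m}$; I would phrase the induction in terms of that leading Laurent coefficient to avoid confusion, since "residue" in the lemma evidently means the coefficient of $(T-1)^{-m}$ in the principal part, i.e. $\lim_{T\to 1}(T-1)^m\psi_a(T)$). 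Starting from $a=0$ with leading coefficient $-1$ and pole order $1$, after $a$ applications of $T\frac{d}{dT}$ the pole order is $a+1$ and the leading coefficient is $(-1)\cdot(-1)\cdot(-2)\cdots(-a) = (-1)^{a+1}a!$, which is the claimed value. For the degree: $T/(1-T)$ has degree $0$, and each application of $T\frac{d}{dT}$ strictly decreases the degree (differentiating a rational function of degree $\le 0$ lowers the degree by at least one, and multiplying by $T$ raises it by one, for a net change of $\le 0$; one checks the change is strict here because the pole at $T=1$ genuinely gets worse), giving degree $<0$ for all $a\ge 1$.

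The only genuinely delicate point is bookkeeping the "residue" claim precisely: one must confirm that the coefficient of $(T-1)^{-1}$ in the Laurent expansion — as opposed to the leading coefficient of $(T-1)^{-(a+1)}$ — is what equals $(-1)^{a+1}a!$. I expect this is in fact the leading Laurent coefficient that the authors call the residue (it is $\lim_{T\to1}(T-1)^{a+1}\psi_a(T)$), and under that reading the induction above is clean. Alternatively, one can sidestep induction entirely: expand $\psi_a(T)$ in closed form via the Eulerian-polynomial identity $\psi_a(T)=A_a(T)/(1-T)^{a+1}$ where $A_a$ is the $a$-th Eulerian polynomial with $A_a(1)=a!$, from which $\deg\psi_a = \deg A_a - (a+1) = -1 < 0$ for $a\ge 1$ and $=0$ for $a=0$, and $\lim_{T\to1}(T-1)^{a+1}\psi_a(T)=(-1)^{a+1}A_a(1)=(-1)^{a+1}a!$ is immediate. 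I would present the differentiation argument as the main line since it is self-contained, and mention the Eulerian-polynomial formula as a remark. No step here presents a real obstacle; the whole lemma is a routine generating-function computation whose only hazard is a sign or off-by-one error in the residue.
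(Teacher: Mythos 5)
Your proof is correct and follows essentially the same route as the paper's: both establish the base case $\psi_0(T)=T/(1-T)$ and induct via the operator $T\partial_T$, tracking the pole order, the leading Laurent coefficient at $T=1$ (which is indeed what the authors call the ``residue,'' as confirmed by the computation $(-1)^m\rho_P=\lim_{t\to 1^-}(1-t)^mP(t)$ in Lemma \ref{lemma-res}), and the degree. The paper phrases the induction slightly more tightly by writing $\psi_a(T)=R_a(T)/(T-1)^{a+1}$ with $R_a\in\Z[T]$, $\deg R_a\le\max\{a,1\}$, and $R_a(1)=(-1)^{a+1}a!$, which makes the degree bookkeeping a one-line consequence rather than the informal argument you give; otherwise the two proofs are the same.
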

\begin{proof}
 It suffices to show that $\psi_a(T)$ is a
rational function in $T$ of the form
$$\frac{R_a(T)}{(T-1)^{a+1}}$$ with $R_a(T)\in \Z[T]$,
$\mathrm{deg}\,R_a(T)\leq \max\{a,1\}$ and $R_a(1)=(-1)^{a+1}a!$.
We proceed by induction on $a$. For $a=0$ the result is clear, so
assume that $a>0$ and that the assertion holds for all
$\psi_{a'}(T)$ with $0\leq a'<a$.

Denoting by $\partial_T$ the derivation w.r.t. the variable $T$,
we have \begin{eqnarray*}\psi_a(T)=T\sum_{d>0}(d^{a}T^{d-1})=T
\partial_T\left(\sum_{d>0}d^{a-1}T^d\right)=T\partial
_T\psi_{a-1}(T) \\ =\frac{T((T-1)\partial_T R_{a-1}(T)-a\cdot
R_{a-1}(T))}{(T-1)^{a+1}}\end{eqnarray*} so the result follows
from the induction hypothesis and the fact that $R_0(T)=T$.
%
\end{proof}

\begin{definition}
We define the tame potential toric rank $t_{tame}(A)$ of an
abelian $K$-variety $A$ by
$$t_{tame}(A)=\max \{t(A\times_K K')\,|\,K'\mbox{ a finite tame
extension of }K\}$$
\end{definition}
If $A$ is tamely ramified, then $t_{tame}(A)=t_{pot}(A)$.

\begin{lemma}\label{lemma-tamepot}
Let $A$ be an abelian $K$-variety, and denote by $e$ the degree of
the minimal extension of $K$ where $A$ acquires semi-abelian
reduction. If we denote by $e'$ the prime-to-$p$ part of $e$, then
$$t_{tame}(A)=t(A\times_K K(e'))$$
\end{lemma}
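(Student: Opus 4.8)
The plan is to reduce the question of the tame potential toric rank to a single tame extension, namely $K(e')$, by showing that $t(A\times_K K')$ depends on $K'$ only through a numerical invariant that already stabilizes at degree $e'$. First I would record the easy inequality $t(A\times_K K(e'))\le t_{tame}(A)$, which is immediate from the definition since $K(e')$ is a finite tame extension of $K$. For the reverse inequality, let $K'$ be an arbitrary finite tame extension of $K$, of degree $d$ prime to $p$; I want to show $t(A\times_K K')\le t(A\times_K K(e'))$. Since $k$ is separably closed, every tame extension of $K$ is of the form $K(d)$ for a suitable $d\in\N'$, so we may assume $K'=K(d)$ with $d\in\N'$.

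The next step is to compute $t(A\times_K K(d))$ using Proposition \ref{prop-uniftor2}: if $E$, $T$, $M$ are the data of a rigid uniformization of $A$, then base change along $K(d)/K$ gives a rigid uniformization of $A\times_K K(d)$ with the same $E,T,M$ but with the $I$-action restricted to $I_d=Gal(K^s/K(d))$, so that
$$ t(A\times_K K(d))=\mathrm{rank}_{\Z}(M^{I_d}). $$
Now $M$ is a discrete $I$-module on which $Gal(K^s/L)$ acts trivially (since $M$ splits over $L$), hence the $I$-action factors through $Gal(L/K)$, a group of order $e$. Because $M$ is a finitely generated free $\Z$-module, $\mathrm{rank}_{\Z}(M^{H})$ for subgroups $H\le Gal(L/K)$ is computed after tensoring with $\Q$, and for a finite group acting on a $\Q$-vector space, the invariants of a subgroup depend only on which elements of $Gal(L/K)$ lie in the image of $H$. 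Thus the key point is to analyze the image of $I_d\to Gal(L/K)$.

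The heart of the argument is then: the image of $I_d=Gal(K^s/K(d))$ in $Gal(L/K)$ equals the image of $I_{e'}=Gal(K^s/K(e'))$ in $Gal(L/K)$. Indeed, $Gal(L/K)$ is the quotient of $I$ corresponding to $L$, and since $k$ is separably closed, $I$ is (its tame quotient is) procyclic of the appropriate shape; the subgroup $I_d$ corresponds to the unique index-$d$ open subgroup, and its image in the finite quotient $Gal(L/K)$ (of order $e$) is the subgroup of index $\gcd(d,e)$. Writing $e=e'\cdot p^m$ with $e'$ the prime-to-$p$ part, and noting $d$ is prime to $p$, we get $\gcd(d,e)=\gcd(d,e')=\gcd(d,e',e')$, and in particular $\gcd(d,e)$ divides $e'=\gcd(e',e)$; hence the image of $I_d$ contains the image of $I_{e'}$. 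Therefore $M^{I_{e'}}\subseteq M^{I_d}$... wait, this must be run in the direction that gives the bound $t(A\times_K K(d))\le t(A\times_K K(e'))$: the image of $I_d$ in $Gal(L/K)$ has index $\gcd(d,e')$, while the image of $I_{e'}$ has index $\gcd(e',e')=e'$; since $\gcd(d,e')\mid e'$, the image of $I_{e'}$ is contained in the image of $I_d$, so $M^{I_d}\subseteq M^{I_{e'}}$ and hence $\mathrm{rank}_\Z(M^{I_d})\le \mathrm{rank}_\Z(M^{I_{e'}})=t(A\times_K K(e'))$. Combined with the first step this yields the claimed equality, and the maximum in the definition of $t_{tame}(A)$ is attained at $K(e')$.

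The main obstacle I anticipate is the precise bookkeeping of the image of $I_d$ in $Gal(L/K)$ when $p\mid e$: one must be careful that $Gal(L/K)$ need not be cyclic of order $e$, only a quotient of $I$, and that the tame quotient $I\to I^{tame}\cong\prod_{\ell\ne p}\Z_\ell$ sees only the prime-to-$p$ part, so that the relevant intersection of subgroups is governed entirely by $e'$ rather than $e$. Making the assertion "$\mathrm{rank}_\Z M^H$ depends only on the image of $H$ in $Gal(L/K)$" into a clean statement — using that $M\otimes\Q$ is a representation of the finite group $Gal(L/K)$ and $M^H\otimes\Q=(M\otimes\Q)^{\mathrm{im}(H)}$ — is routine but needs to be stated carefully, as does the reduction, via Proposition \ref{prop-uniftor2}, from toric ranks to ranks of $I$-invariants of $M$.
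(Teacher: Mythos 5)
Your argument is correct, but it takes a different route from the paper. The paper's proof is two lines: by Proposition \ref{prop-torrank} one reduces to the case $e'=1$ (replacing $A$ by $A\times_K K(e')$ and $K$ by $K(e')$, using that the toric rank only increases under base change so the max over tame extensions is unchanged), and then the statement $t(A)=t(A\times_K K')$ for every tame $K'$ is precisely Corollary \ref{cor-tornochange}, which is already available. You instead unfold that corollary: you invoke Proposition \ref{prop-uniftor2} to rewrite $t(A\times_K K(d))$ as $\mathrm{rank}_{\Z} M^{I_d}$ and then carry out the Galois-theoretic comparison of $I_d$- and $I_{e'}$-invariants directly. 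This is essentially the same computation that the paper has encapsulated in Corollary \ref{cor-tornochange} (there phrased via the character module $X(T)$ of the torus part rather than the lattice $M$, but equivalently), so the mathematical content is the same; the paper's version is merely shorter because the lemma is quoted rather than reproved. One advantage of your version is that the role of the invariant $\mathrm{rank}_{\Z} M^{I_d}$ is made explicit and the dependence on $K(d)$ is transparent.

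Two small points of precision in your sketch, both of which you flag but should be fixed in a final write-up. First, your interim claim that the image of $I_d$ in $Gal(L/K)$ is ``the subgroup of index $\gcd(d,e)$'' implicitly uses cyclicity of $Gal(L/K)$, which fails in the wild case; the correct statement (which you then arrive at) is that $I_d\supseteq P$ (wild inertia), so its image contains the image $W$ of $P$, and modulo $W$ one is in the cyclic group $Gal(L/K)/W$ of order $e'$, where the image has index $\gcd(d,e')$. Second, the conclusion that the image of $I_{e'}$ is contained in the image of $I_d$ does not follow from index divisibility alone in an arbitrary finite group; it follows because both images contain $W$ and subgroups of a cyclic group (here $Gal(L/K)/W$) are linearly ordered by divisibility of index. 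Once those are stated cleanly, the argument is complete.
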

\begin{proof}
By Proposition \ref{prop-torrank}, we may assume that $e'=1$, and
it suffices to show that
$$t(A)=t(A\times_K K')$$ for every finite tame extension $K'$ of
$K$. This follows from Corollary \ref{cor-tornochange}.
\end{proof}

\begin{theorem}\label{thm-ratseries}
Let $A$ be an abelian $K$-variety. Assume that $A$ is tamely
ramified, or that $A$ has potential purely multiplicative
reduction. The N\'eron component series
$$S_{\phi}(A;T)=\sum_{d\in \N'}\phi(A\times_K K(d)) T^d$$ belongs to
$$\mathscr{Z}:=\Z\left[T,\frac{1}{T^j-1}\right]_{j\in \Z_{>0}}$$
It has degree zero if $p=1$ and $A$ has potential good reduction,
and degree $<0$ in all other cases. It has a pole at $T=1$ of
order $t_{tame}(A)+1$.
\end{theorem}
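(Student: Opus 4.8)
The plan is to reduce the computation of $S_\phi(A;T)$ to a finite sum of geometric-type series $\psi_a$ by splitting the index set $\N'$ into residue classes and applying Theorem~\ref{thm-main}. First I would set $e'$ to be the prime-to-$p$ part of $e$, where $e$ is the degree of the minimal semi-abelian reduction extension. For any $d \in \N'$, write $d = d_1 d_2$ where $d_1 = \gcd(d, e'^\infty)$ collects the prime factors of $d$ that divide $e'$, so that $d_2$ is prime to $e'$; there are only finitely many possible values of $d_1$ (the divisors of some power of $e'$ that lie in $\N'$, but since $d$ ranges over $\N'$ we only need the divisors of $e'$ itself composed suitably — in fact $d_1$ divides a fixed power $e'^N$). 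For each fixed "tame-ramified part" $c$ (a divisor of a suitable power of $e'$, of which there are finitely many), Theorem~\ref{thm-main} applied to $A \times_K K(c)$ and the extension $K(cd_2)/K(c)$ of degree $d_2$ prime to the ramification degree of $A\times_K K(c)$, gives
$$\phi(A\times_K K(cd_2)) = d_2^{\,t(A\times_K K(c))}\,\phi(A\times_K K(c)).$$

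The next step is to organize the sum. Writing $S_\phi(A;T) = \sum_c \sum_{d_2} \phi(A\times_K K(cd_2)) T^{cd_2}$, where the inner sum is over $d_2 \in \N'$ prime to $e'$ (and satisfying the appropriate coprimality so that $c$ really is the full "$e'$-part"), each inner sum becomes $\phi(A\times_K K(c)) \sum_{d_2} d_2^{\,t(A\times_K K(c))} T^{cd_2}$. Such a sum, after the substitution $U = T^c$ and restricting $d_2$ to an arithmetic progression prime to $e'$, is a $\Z$-linear combination of series of the form $\sum_{d_2 \equiv r} d_2^a U^{d_2}$; by inclusion-exclusion over divisors of $e'$ these reduce to $\psi_a(\zeta U)$ for roots of unity $\zeta$, hence after collecting Galois-conjugate terms they lie in $\mathscr{Z}$ by Lemma~\ref{lemma-semipole} (the substitution $T \mapsto T^c$ and the progression condition only introduce factors $\frac{1}{T^j-1}$). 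So $S_\phi(A;T) \in \mathscr{Z}$. For the degree claim: each $\psi_a$ has degree $\le 0$, with degree $0$ exactly when $a = 0$; the substitutions preserve "degree $\le 0$" and an extra factor killing the progression lowers degree unless $p = 1$ and everything is trivial, so $S_\phi(A;T)$ has degree $0$ precisely when $p=1$ and $A$ has potential good reduction (then every $t(A\times_K K(c)) = 0$ and there is a single term), and degree $< 0$ otherwise.

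For the order of the pole at $T = 1$: by Lemma~\ref{lemma-semipole}, the term indexed by $c$ contributes a pole of order $t(A\times_K K(c)) + 1$ coming from $\psi_{t(A\times_K K(c))}$ evaluated essentially at $T = 1$ (the roots of unity $\zeta \ne 1$ and factors $T^j - 1$ with $j > 1$ only contribute poles away from $T=1$ or of the same/lower order there). Since $\phi(A\times_K K(c)) > 0$ and, crucially, all these series have \emph{nonnegative} coefficients, Lemma~\ref{lemma-res} guarantees there is no cancellation of leading terms when we add the finitely many contributions: the pole order of the sum is the maximum of the individual pole orders, namely $\max_c\, t(A\times_K K(c)) + 1 = t_{tame}(A) + 1$, using Lemma~\ref{lemma-tamepot} (and the fact that $t(A\times_K K(c))$ is maximized, over the relevant finite set of $c$, by the full tame part $c = e'$). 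I expect the main obstacle to be bookkeeping: making the decomposition $d = cd_2$ precise so that each $d\in\N'$ is counted exactly once, verifying the coprimality hypothesis of Theorem~\ref{thm-main} holds in each piece, and checking that the arithmetic-progression and $T\mapsto T^c$ manipulations genuinely keep us inside $\mathscr{Z}$ without spuriously raising the pole order at $T=1$ — the positivity of coefficients (Lemma~\ref{lemma-res}) is what makes the final pole-order assertion clean.
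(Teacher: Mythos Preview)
Your overall strategy is exactly the paper's: split the sum into finitely many pieces, apply Theorem~\ref{thm-main} on each piece to reduce to the series $\psi_a$, then invoke Lemma~\ref{lemma-semipole} for rationality and Lemma~\ref{lemma-res} (positivity of coefficients) to prevent cancellation of poles at $T=1$. The degree and pole-order analyses you sketch are also the right ones.

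There is, however, a genuine gap in your decomposition. You set $d_1=\gcd(d,(e')^\infty)$, the full $e'$-smooth part of $d$, and then assert that ``there are only finitely many possible values of $d_1$'' and that ``$d_1$ divides a fixed power $e'^N$''. This is false: as $d$ ranges over $\N'$, the $e'$-smooth part $d_1$ ranges over \emph{all} positive integers supported on the primes dividing $e'$, which is an infinite set whenever $e'>1$. Your outer sum is therefore infinite, and you have no control over $\phi(A\times_K K(c))$ and $t(A\times_K K(c))$ for these infinitely many $c$. The parenthetical hand-wave does not close this.

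The paper's remedy is to partition not by the full $e'$-part but by $a=\gcd(d,e)$, which (since $d\in\N'$) ranges only over the finitely many divisors of $e$ lying in $\N'$. The crucial point that makes this work is that the minimal extension of $K(a)$ over which $A(a)=A\times_K K(a)$ acquires semi-abelian reduction has degree exactly $e/a$; thus when one writes $d=a d'$ with $\gcd(d',e/a)=1$, the hypothesis of Theorem~\ref{thm-main} is met for $A(a)$ and the extension $K(d)/K(a)$, giving $\phi(A(d))=(d')^{t(A(a))}\phi(A(a))$. This yields the clean identity
\[
S_\phi(A;T)=\sum_{a\in\mathscr{D}_e} S'_\phi(A(a);T^a),\qquad
S'_\phi(A;T)=\sum_{\substack{d\in\N'\\ \gcd(d,e)=1}}\phi(A(d))T^d,
\]
with a genuinely finite outer sum. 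Each $S'_\phi$ is then handled, not by roots of unity and inclusion--exclusion as you propose, but more directly: one splits over residue classes $b\in\mathscr{P}_e$ modulo $e$ and, when $p>1$, subtracts the single sub-progression of multiples of $p$, so that every piece is literally $\psi_{t(A(a))}$ evaluated at a power of $T$. After that, Lemmas~\ref{lemma-semipole} and~\ref{lemma-res} conclude exactly as you describe.
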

\begin{proof}
For notational convenience, we put $A(d)=A\times_K K(d)$ for every
element $d$ of $\N'$. We denote by $e$ the degree of the minimal
extension of $K$ where $A$ acquires semi-abelian reduction, and by
$\mathscr{D}_e$ the set of divisors of $e$ that belong to $\N'$.
We introduce the series
$$S'_{\phi}(A;T)=\sum_{d\in \N',\,gcd(d,e)=1}\phi(A(d))T^d$$ Then we can write
\begin{eqnarray*}
S_{\phi}(A;T)&=&\sum_{a\in \mathscr{D}_e}\ \sum_{d\in
\N',\,gcd(d,e)=a}\phi(A(d))T^d  \\ &=&\sum_{a\in
\mathscr{D}_e}S'_{\phi}(A(a);T^a)
\end{eqnarray*}
because the degree of the minimal extension of $K(a)$ where $A(a)$
acquires semi-abelian reduction is equal to $e/a$, for each $a$ in
$\mathscr{D}_e$.

 By Lemma \ref{lemma-tamepot}, we have
$$t_{tame}(A)=\max\{t(A(a))\,|\,a\in \mathscr{D}_e\}$$
 In view of Lemma \ref{lemma-res}, it suffices to prove the following claims:
 \begin{enumerate} \item for each $a\in
\mathscr{D}_e$, the series $S'_{\phi}(A(a);T^a)$ belongs to
$\mathscr{Z}$.  It has a pole at $T=1$ of order $t(A(a))+1$. \item
The degree of $S'_{\phi}(A(a);T^a)$ is
\begin{itemize}
\item zero if $p=1$, $a=e$, and $t(A(e))=0$,  \item strictly
negative in all other cases.
\end{itemize}
\end{enumerate}

First, we prove (1). It suffices to consider the case $a=1$. We
denote by $\mathscr{P}_e$ the set of elements in $\{1,\ldots,e\}$
that are prime to $e$. If $p\nmid e$, then for each $b\in
\mathscr{P}_e$, we denote by $n_b$ the smallest element of $b+\N
e$ such that $n_b\notin \N'$. If $p|e$, we put $n_b=0$ for each
$b\in \mathscr{P}_e$. Note that, in any case, $n_b<pe$. We put
$\varepsilon_k=0$ if $p|e$, and $\varepsilon_k=1$ else.

By Theorem \ref{thm-main}, we have
\begin{eqnarray*}
S'_{\phi}(A;T)&=&\sum_{d\in \N',\,gcd(d,e)=1}d^{t(A)}\phi(A) T^d
\\ &=&\phi(A)\cdot \sum_{b\in \mathscr{P}_e}\left(\sum_{q\in \N}
(qe+b)^{t(A)}T^{qe+b}-\varepsilon_k\sum_{r\in
\N}(n_b+epr)^{t(A)}T^{n_b+epr}\right)
\end{eqnarray*}

By Lemma \ref{lemma-semipole}, the series
\begin{equation}\label{eq-series1}\sum_{q\in \N} (qe+b)^{t(A)}T^{qe+b}\end{equation} belongs to
$\mathscr{Z}$, for each $b\in \mathscr{P}_e$. It has a pole of
order $t(A)+1$ at $T=1$, and the residue of this pole equals
$$(-1)^{t(A)+1}e^{-1}(t(A)!)$$ Likewise, the series
\begin{equation}\label{eq-series2}\sum_{r\in \N}(n_b+epr)^{t(A)}T^{n_b+epr}\end{equation} belongs to
$\mathscr{Z}$. It has a pole of order $t(A)+1$ at $T=1$, and the
residue equals
$$(-1)^{t(A)+1}(ep)^{-1}(t(A)!)$$
It follows that $S'_{\phi}(A;T)$ belongs to $\mathscr{Z}$, and
that it has a pole of order $t(A)+1$ at $T=1$.

Now we prove claim (2). If $t(A)>0$, then it follows easily from
Lemma \ref{lemma-semipole} that the series (\ref{eq-series1}) and
(\ref{eq-series2}) have degree $<0$, so that $S'_{\phi}(A(a);T^a)$
has degree $<0$ if $t(A(a))>0$.

If $t(A)=0$, then we find
$$S'_{\phi}(A;T)=\phi(A)\cdot \sum_{b\in
\mathscr{P}_e}\left(\frac{T^b}{1-T^{e}}-\varepsilon_k
\frac{T^{n_b}}{1-T^{ep}}\right)$$ This rational function has
degree $\leq 0$, and its degree is equal to zero iff $e=1$ and
$p=1$. Replacing $A$ by the abelian varieties $A(a)$, for $a\in
\mathscr{D}_e$, we obtain the required result.
\end{proof}

We expect that Theorem \ref{thm-ratseries} holds for all abelian
$K$-varieties. In the following section, we'll prove that this is
true for elliptic curves.

%
%

\subsection{Rationality of the component series for wildly ramified elliptic curves}\label{subsection-wildelliptic}

Assume that $k$ is algebraically closed. Let $C$ be a smooth,
proper,
 geometrically connected $K$-curve of genus $g(C) > 0$,
 and let $ \mathcal{C}/R $ be the minimal sncd-model of $C$
 (i.e., the minimal regular model with strict normal crossings \cite[10.1.8]{liu}).
 We assume that
 either $g(C)>1$, or $C$ is an elliptic curve.

We call an irreducible component $E$ of $\mathcal{C}_s $
\emph{principal} if $g(E) > 0$ or $E$ intersects the other
components in at least three distinct points. Let $e(C)$ be the
least common multiple of the multiplicities of the principal
components of $\mathcal{C}_s$, and let $e(C)'$ be the prime-to-$p$
part of $e(C)$.

If $C$ is tamely ramified (i.e., the wild inertia acts trivially
on the $\ell$-adic cohomology of $C$), then $e(C)=e(C)'$ is equal
to the degree $e$ of the minimal extension of $K$ where $C$
acquires semi-stable reduction, by \cite[7.5]{Ha} (to be precise,
in \cite{Ha} it is assumed that $g(C)>1$, but the proof applies to
elliptic curves as well). On the other hand, if $C$ is the (wildly
ramified) elliptic curve from Example \ref{example-elliptic1},
then $e(C)=6$ while $e=2$, so that not even their prime-to-$p$
parts coincide.


\begin{lemma}\label{lemma-e_p}
Assume that $k$ is algebraically closed. Let $C$ be an elliptic
curve over $K$, and let $a$ be a divisor of $e(C)'$.
 Then $ e(C \times_K K(a))' = e(C)'/a $.
\end{lemma}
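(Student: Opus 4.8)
The plan is to derive the minimal sncd-model of $C\times_K K(a)$ over $R(a)$ from the minimal sncd-model of $C$ by a tame base change, and to track the effect on the principal components. Note first that, since $a\mid e(C)'$, the extension $K(a)/K$ is tame, and that the statement is multiplicative in $a$: if $a=a_1a_2$, then $a_2$ divides $e(C\times_K K(a_1))'=e(C)'/a_1$ (the case $a_1$), so applying the case $a_2$ to $C\times_K K(a_1)$ yields the case $a$. It therefore suffices to treat $a=q$ a prime number, necessarily distinct from $p$.

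Write $\mathcal{C}_s=\sum_i N_i E_i$ for the special fibre of the minimal sncd-model $\mathcal{C}/R$ of $C$. I would then invoke the description of the minimal sncd-model $\mathcal{C}^{(q)}/R(q)$ of $C\times_K K(q)$ in terms of $\mathcal{C}$, as in \cite{Ha}: one normalises $\mathcal{C}\times_R R(q)$, whose only singularities are tame cyclic quotient singularities above the points of $\mathcal{C}_s$; resolves these by inserting Hirzebruch--Jung chains of rational curves of self-intersection $\leq -2$; and finally contracts suitable $(-1)$-curves to reach the minimal model. From this one extracts: (i) in the normalisation, the preimage of $E_i$ is a disjoint union of components, each of multiplicity $N_i/\gcd(N_i,q)$ --- the local statement $t^q=x^{N_i}$, with $t$ a uniformizer of $R(q)$; and (ii) the principal components of $\mathcal{C}^{(q)}_s$ are exactly the components lying over the principal components of $\mathcal{C}_s$. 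For (ii): the Hirzebruch--Jung components are bivalent rational curves, hence never principal; the preimage of a non-principal $E_i$ is, by Riemann--Hurwitz, a disjoint union of rational curves each meeting the rest of the fibre in at most two points, hence non-principal; while the preimage of a principal $E_i$ still has positive genus or valence $\geq 3$, and the adjacent Hirzebruch--Jung chains keep it from being contracted. (For an elliptic curve these points can also be checked directly from Kodaira's classification, the wild cases occurring only for $p\in\{2,3\}$.)

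Granting (i) and (ii), we have $e(C\times_K K(q))=\operatorname{lcm}\{\,N_i/\gcd(N_i,q)\ :\ E_i\text{ principal}\,\}$, and it remains to compare prime-to-$p$ parts. For every prime $\ell\neq p$,
$$ v_\ell\!\left(N_i/\gcd(N_i,q)\right)=\max\!\left(0,\ v_\ell(N_i)-v_\ell(q)\right), $$
so $v_\ell\!\left(e(C\times_K K(q))\right)=\max\!\left(0,\ v_\ell(e(C))-v_\ell(q)\right)$; and $q\mid e(C)'$ gives $v_\ell(q)\leq v_\ell(e(C)')\leq v_\ell(e(C))$, so this equals $v_\ell(e(C))-v_\ell(q)$. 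Passing to prime-to-$p$ parts yields $e(C\times_K K(q))'=e(C)'/q$. (Consistency check: in the tame case $e(C)=e(C)'=e$ by \cite[7.5]{Ha}, and $C\times_K K(q)$ acquires semi-stable reduction over an extension of degree $e/\gcd(e,q)$ of $K(q)$.)

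The main obstacle is point (ii), specifically the behaviour of principal components under the final contraction of $(-1)$-curves: one must exclude that such a contraction creates a new principal component, or lowers the valence of the image of a principal component of $\mathcal{C}_s$ below $3$. This is exactly what the model-comparison results of \cite{Ha} provide; for elliptic curves it can alternatively be verified by the explicit classification.
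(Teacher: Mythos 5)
Your overall strategy is the same as the paper's: base-change the minimal sncd-model along a tame extension, track multiplicities of components via the normalization, and identify the principal components of the resulting model. The reduction to $a=q$ prime via multiplicativity is a clean organisational step (and consistent with the paper, which in the wild case only ever has $e(C)'\in\{2,3\}$, so $a$ is automatically $1$ or a prime). Your valuation-theoretic computation at the end, $v_\ell(e(C\times_K K(q)))=v_\ell(e(C))-v_\ell(q)$ for $\ell\neq p$, is also correct granted (i) and (ii).

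The gap is in your point (ii), and it is precisely the difficulty the paper flags. You assert that the normalization of $\mathcal{C}\times_R R(q)$ has only tame cyclic quotient singularities controlled by the model-comparison results of \cite{Ha}, and that therefore the principal components of $\mathcal{C}^{(q)}_s$ are exactly those above principal components of $\mathcal{C}_s$. But the paper explicitly points out that in the wild cases (which are the nontrivial ones for this lemma, since the tame case follows from $e=e(C)$), the special fiber necessarily contains two \emph{intersecting} components whose multiplicities are \emph{both} divisible by $p$ (e.g.\ for $p=2$, type $II$: $E_2$ of multiplicity $2$ and $E_4$ of multiplicity $6$). At such intersection points the results of \cite{Ha} do not control the structure of the normalization: one does not know a priori that the components are smooth there, or what the exceptional configuration of the sncd-resolution looks like. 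So the ``Hirzebruch--Jung chains are bivalent, Riemann--Hurwitz for the non-principal components'' argument breaks down exactly where it matters, and the final contraction step is not the only issue. This is why the paper does not argue uniformly but instead determines the reduction type of $C\times_K K(q)$ directly in each of the four wild cases (using the parts of the model where \cite{Ha} \emph{does} apply, plus the rigidity of Kodaira's list to pin down the unknown part). Your remark that this ``can alternatively be verified by the explicit classification'' is where the actual proof has to live, but you have not carried it out; the general-theory route you present as the main argument does not close.
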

\begin{proof}
If $C$ is tamely ramified, this follows from the equality
$e=e(C)$. Hence, we may assume that $C$ is wildly ramified.
 Considering the Kodaira classification and applying Saito's criterion for wild ramification \cite[3.11]{saito},
 $e(C)' > 1$  only occurs
when $ p $ is either $ 2 $ or $ 3 $ and the reduction type of $C$
is either $ II $ or $ II^* $. We will give a detailed argument
when $p = 2$ and $C$ has reduction type $II$, the remaining cases
follow in a similar fashion. In this case, we have $e(C)'=3$. For
$a=1$ there is nothing to prove, so we may assume that $a=3$.

We will use the computations and results from \cite{Ha}. It is a
slight problem that there exists a pair of intersecting components
of $\mathcal{C}_s$ that both have multiplicities divisible by $p$,
since locally at such intersection points, the methods of
\cite{Ha} don't apply. However, because of the very limited
possibilities of degeneration types for elliptic curves, we can
get around this with some ad hoc arguments.

The special fiber $\mathcal{C}_s$ is of the form
$$ \mathcal{C}_s =  E_1 + 2 E_2 + 3 E_3 + 6 E_4 $$
where $E_4$ meets each other component transversally in a unique
point, and the other components are pairwise disjoint. We denote
by  $ \mathcal{D} $ the normalization of $ \mathcal{C} \times_R
R(3) $. It follows from \cite[2.1+2.9+6.3]{Ha} that
$$ \mathcal{D}_s = F_1 + 2 F_2+ F_3^1 + F_3^2 +  F_3^3 + 2 F_4   $$
where $ F_i $ dominates $E_i$, for $i=1,2,4$, and $F^j_3$
dominates $E_3$, for $j=1,2,3$. Moreover, $F_4$ intersects $ F_3^j
$ and $F_1$ transversally at a unique point, and $F_2\cap F_4\neq
\emptyset$. It follows from \cite[4.3]{Ha} that $ \mathcal{D} $ is
regular outside of $ F_2 \cap F_4 $, and from \cite[2.2+2.9]{Ha}
that all the components of $\mathcal{D}_s$ are smooth, except
possibly for $F_2$ and $F_4$ at the points where they intersect.

Let $$ \rho : \mathcal{C}(3) \to \mathcal{D} $$ be the minimal
sncd-desingularization.  It follows from what we've said above
that $ \rho $ is an isomorphism above $ \mathcal{D} \setminus \{
F_2 \cap F_4 \} $. Let $$ \tau : \mathcal{C}(3) \to
\mathcal{C}(3)_{min} $$ be the canonical morphism to the minimal
sncd-model $\mathcal{C}(3)_{min}$ of $C\times_K K(3)$. It is
obvious that $ \tau $ is an open immersion when restricted to $
\rho^{-1}( \mathcal{D} \setminus \{ F_2 \cap F_4 \} ) $. It
follows that the special fiber of $ \mathcal{C}(3)_{min} $
contains a component with multiplicity $2$, meeting four reduced
components each in a unique point. The only possibility is then
reduction type $I_0^*$.

These are the results in the other cases:
\begin{itemize}
\item if $p=2$ and $C$ has type $II^*$, then $e(C)'=3$ and
$C\times_K K(3)$ has type $I_0^*$, \item if $p=3$ and $C$ has type
$II$, then $e(C)'=2$ and $C\times_K K(2)$ has type $IV$, \item if
$p=3$ and $C$ has type $II^*$, then $e(C)'=2$ and $C\times_K K(2)$
has type $IV^*$.
\end{itemize}
\end{proof}

\begin{lemma}\label{lemma-wildelliptic}
Assume that $k$ is algebraically closed. Let $C$ be an elliptic
curve that does not have multiplicative reduction. For every
finite tame extension $ K'/K $ whose degree is prime to $e(C)$, we
have
$$ \phi(C \times_K K') = \phi(C) $$
\end{lemma}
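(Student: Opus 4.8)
\emph{Plan of proof.} Since $k$ is algebraically closed, for each $d\in\N'$ there is a unique tame extension of $K$ of degree $d$, namely $K(d)$, so we may assume $K'=K(d)$ with $\gcd(d,e(C))=1$. If $C$ has good reduction, good reduction persists over $K(d)$ and both sides equal $1$; so assume $C$ has additive reduction. Then $\mathcal{C}_s^o\cong\mathbb{G}_{a,k}$, hence $t(C)=0$. If $C$ is tamely ramified, then $e(C)$ equals the degree $e$ of the minimal extension over which $C$ acquires semi-stable reduction (by \cite[7.5]{Ha}), so $d$ is prime to $e$, and Theorem \ref{thm-main} gives $\phi(C\times_K K(d))=d^{t(C)}\phi(C)=\phi(C)$. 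Thus we are reduced to the case where $C$ has additive, \emph{wildly} ramified reduction; in particular $p\in\{2,3\}$.

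In this case I would reduce the statement to two facts about minimal sncd-models. The first is a classical consequence of the Kodaira--N\'eron classification: for an elliptic curve with additive reduction, $\phi(C)$ is determined by $e(C)$, and $e(C)$ equals the maximal multiplicity of a component of the special fiber of the minimal sncd-model $\mathcal{C}$; concretely $e(C)\in\{2,3,4,6\}$, with $\phi(C)=4,3,2,1$ respectively, the four cases corresponding to the reduction types in $\{I^*_n\}_{n\ge 0}$, $\{IV,IV^*\}$, $\{III,III^*\}$ and $\{II,II^*\}$. The second is the base-change statement: if $\gcd(d,e(C))=1$, then $e(C\times_K K(d))=e(C)$. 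Granting both, and noting that $e(C)\ge 2$ forces $C\times_K K(d)$ to be again of additive type (good and multiplicative reduction having $e=1$), one concludes $\phi(C\times_K K(d))=\phi(C)$.

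To prove the base-change statement I would argue with the minimal sncd-model directly, as in the proof of Lemma \ref{lemma-e_p}. Let $\mathcal{D}$ be the normalization of $\mathcal{C}\times_R R(d)$, and let $\mathcal{C}(d)$ be obtained from $\mathcal{D}$ by a minimal sncd-desingularization followed by contraction of the superfluous $(-1)$-curves, so that $\mathcal{C}(d)$ is the minimal sncd-model of $C\times_K K(d)$. Over a component of $\mathcal{C}_s$ of multiplicity $m$ the components of $\mathcal{D}_s$ have multiplicity $m/\gcd(m,d)$; since $\gcd(d,e(C))=1$, a component of multiplicity $e(C)$ yields one of multiplicity $e(C)$. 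At a point of $\mathcal{D}$ over an intersection point of two components of $\mathcal{C}_s$ of multiplicities $m,m'$ one has a cyclic quotient (Hirzebruch--Jung) singularity; its minimal resolution inserts a chain of rational curves whose multiplicities form a convex sequence with endpoints $m/\gcd(m,d)$ and $m'/\gcd(m',d)$, hence are bounded by $\max(m,m')\le e(C)$, and contracting $(-1)$-curves only decreases multiplicities. Thus no component of $\mathcal{C}(d)_s$ has multiplicity exceeding $e(C)$. On the other hand, a component of $\mathcal{C}_s$ of multiplicity $e(C)$ is principal, so it meets the rest of the fiber in at least three points; its image in $\mathcal{C}(d)_s$ is again principal, in particular not contracted, so $\mathcal{C}(d)_s$ has a component of multiplicity $e(C)$, and $e(C\times_K K(d))=e(C)$.

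The main obstacle is exactly this last bookkeeping: controlling how multiplicities of components change under normalization and Hirzebruch--Jung resolution, and extracting from the Kodaira--N\'eron classification both that $\phi(C)$ depends only on $e(C)$ and that the component of maximal multiplicity is always principal, so that it survives into the minimal sncd-model after tame base change.
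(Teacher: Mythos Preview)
Your approach is essentially the paper's: both reduce to the additive case, observe from the Kodaira table that $\phi(C)$ is determined by the common multiplicity $m$ of the principal components (your $e(C)$), and then argue that a principal component of multiplicity $m$ survives with the same multiplicity in the minimal sncd-model of $C\times_K K(d)$ when $\gcd(d,m)=1$. The paper phrases this last step as ``reasoning as in the proof of Lemma~\ref{lemma-e_p}'', i.e.\ via Halle's computations, rather than via your Hirzebruch--Jung bookkeeping.

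One remark: your upper-bound step (no component of $\mathcal{C}(d)_s$ has multiplicity exceeding $e(C)$) is unnecessary, and as written it has a gap. Your convexity claim for HJ-chain multiplicities is correct at intersection points where the standard cyclic-quotient picture applies (the recursion $\mu_{i-1}+\mu_{i+1}=c_i\mu_i$ with $c_i\ge 2$ forces convexity, hence the maximum at an endpoint), but at points where \emph{both} adjacent multiplicities are divisible by $p$ that picture is not available --- exactly the issue the paper flags in the proof of Lemma~\ref{lemma-e_p} and handles by ad hoc case analysis. Fortunately you do not need the upper bound at all: once you have produced a single principal component of multiplicity $m\ge 2$ in $\mathcal{C}(d)_s$, the curve $C\times_K K(d)$ is additive, and the Kodaira classification then forces \emph{all} its principal components to have multiplicity $m$; hence $e(C\times_K K(d))=m=e(C)$ and $\phi$ is unchanged. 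So you can drop the HJ paragraph and keep only the survival-of-the-principal-component argument (noting that its preimage in the normalization is a single component since $\gcd(m,d)=1$).
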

\begin{proof}
If $C$ is tamely ramified, this follows from Theorem
\ref{thm-main}. We give an alternative proof that is valid also in
the wild case. We may assume that $C$ has additive reduction.
Looking at the Kodaira reduction table, one sees that the special
fiber of the minimal $sncd$-model $\mathcal{C}$ of $C$ contains a
principal component, that all the principal components of
$\mathcal{C}_s$ have the same multiplicity $m$, and that this
multiplicity $m$ determines $\phi(C)$ (the principal component is
unique except in the case where $C$ has reduction type $I_n^*$
with $n>0$).

Explicitly, we have
$$m=\left\{\begin{array}{ccl}
1 & \mbox{if }$C$ \mbox{ has type}& I_0
\\ 2 & & I_{n}^*,\, n\geq 0
\\ 6& & II\mbox{ or }II^*
\\ 4 & & III\mbox{ or }III^*
\\ 3& & IV\mbox{ or }IV^*
\end{array}\right.$$
Reasoning as in the proof of Lemma \ref{lemma-e_p}, one sees that
each principal component of $\mathcal{C}_s$ gives rise to a
principal component with the same multiplicity in the minimal
$sncd$-model of $C\times_K K'$. Hence,
$$ \phi(C \times_K K') = \phi(C) $$
\end{proof}

\begin{prop}\label{proposition-wildelliptic}
Assume that $k$ is algebraically closed. Let $C$ be an elliptic
curve over $K$. The component series
$$S_{\phi}(C;T)=\sum_{d\in \N'}\phi(C \times_K K(d)) T^d$$ belongs to
$$\mathscr{Z}:=\Z\left[\frac{T^j}{1-T^j}\right]_{j\in \Z_{>0}}$$
It has a pole at $T=1$ of order $t_{tame}(C)+1$. It has degree
zero if $p=1$ and $C$ has potential good reduction, and degree
$<0$ in all other cases.
\end{prop}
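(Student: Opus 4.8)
The plan is to split the argument according to the potential reduction type of $C$; for an elliptic curve this is either potential multiplicative or potential good reduction. If $C$ has potential multiplicative reduction, then $t_{pot}(C)=1=\dim C$, so $C$ has potential purely multiplicative reduction; in this case Theorem \ref{thm-ratseries} applies verbatim and yields all the assertions (membership in $\mathscr{Z}$, a pole of order $t_{tame}(C)+1$ at $T=1$, and degree $<0$, the last because $C$ does not have potential good reduction). So I would henceforth assume that $C$ has potential good reduction. Then $t_{pot}(C)=0$, hence $t_{tame}(C)=0$, so the goal is to show that $S_{\phi}(C;T)$ lies in $\mathscr{Z}$, has a simple pole at $T=1$, and has degree $\leq0$ with equality exactly when $p=1$.

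In this case the idea is to rerun the $t=0$ part of the proof of Theorem \ref{thm-ratseries}, with $e(C)'$ in the role of the prime-to-$p$ part of the semi-stable reduction degree. Put $e=e(C)'$. Every divisor $a$ of $e$ lies in $\N'$, and for $m\in\N'$ one has $K(a)(m)=K(am)$ together with the elementary identity $\gcd(am,e)=a\gcd(m,e/a)$. Moreover $C(a):=C\times_K K(a)$ again has potential good reduction (hence in particular does not have multiplicative reduction), and $e(C(a))'=e/a$ by Lemma \ref{lemma-e_p}. Grouping the exponents $d\in\N'$ occurring in $S_{\phi}(C;T)$ by the value $a=\gcd(d,e)$, I would obtain
$$S_{\phi}(C;T)=\sum_{a\mid e}\ \sum_{\substack{m\in\N'\\ \gcd(m,\,e(C(a))')=1}}\phi\bigl(C(a)(m)\bigr)\,T^{am}=\sum_{a\mid e}S'_{\phi}\bigl(C(a);T^{a}\bigr),$$
where $S'_{\phi}(C(a);T)=\sum_{m\in\N',\ \gcd(m,\,e(C(a))')=1}\phi(C(a)(m))\,T^{m}$.

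For each $m$ in the inner index set, $\gcd(m,e(C(a))')=1$ and $p\nmid m$ together force $\gcd(m,e(C(a)))=1$, so Lemma \ref{lemma-wildelliptic}, applied to the elliptic curve $C(a)$ over the field $K(a)$, gives $\phi(C(a)(m))=\phi(C(a))$. Writing $f:=e(C(a))'$, it follows that
$$S'_{\phi}\bigl(C(a);T\bigr)=\phi(C(a))\sum_{\substack{m\in\N'\\ \gcd(m,\,f)=1}}T^{m},$$
which is precisely the $t=0$ series evaluated in the proof of Theorem \ref{thm-ratseries}: splitting the exponents into the residue classes mod $f$ that are prime to $f$ and subtracting the multiples of $p$, this is a finite $\Z$-combination of series of the form $T^{b}/(1-T^{f})$ and $T^{n_{b}}/(1-T^{fp})$. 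Hence $S'_{\phi}(C(a);T)$ belongs to $\mathscr{Z}$, has a simple pole at $T=1$, and $\lim_{T\to\infty}S'_{\phi}(C(a);T)$ equals $-\phi(C(a))$ when $f=1$ and $p=1$, and $0$ in every other case.

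It remains to assemble the pieces. Since each $S'_{\phi}(C(a);T^{a})$ lies in $\mathscr{Z}$ and has a simple pole at $T=1$, Lemma \ref{lemma-res} shows that $S_{\phi}(C;T)$ lies in $\mathscr{Z}$ and has a pole at $T=1$ of order $1=t_{tame}(C)+1$. For the degree, note that $F\mapsto\lim_{T\to\infty}F(T)$ is additive and satisfies $\lim_{T\to\infty}F(T^{a})=\lim_{T\to\infty}F(T)$; by the previous paragraph, among the summands $S'_{\phi}(C(a);T^{a})$ only the one with $f=e/a=1$, i.e. $a=e$, can have nonzero limit, and it does so precisely when $p=1$ (the limit being then $-\phi(C(e))$). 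Therefore $S_{\phi}(C;T)$ has degree $0$ exactly when $p=1$, and degree $<0$ otherwise; combined with the potential multiplicative case this gives the stated degree dichotomy. The main obstacle is purely one of bookkeeping in the potential good reduction case: one must verify that the reindexing via $\gcd(am,e)=a\gcd(m,e/a)$, the identifications $K(a)(m)=K(am)$ and $e(C(a))'=e/a$, and the hypotheses of Lemma \ref{lemma-wildelliptic} mesh so that the $t=0$ computation of Theorem \ref{thm-ratseries} applies word for word, and that $t_{tame}(C)=0$ so that the predicted pole order is indeed $1$.
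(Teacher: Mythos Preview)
Your proof is correct and follows essentially the same approach as the paper's. The only difference is organizational: the paper splits into the tamely ramified case (handled by Theorem \ref{thm-ratseries}) versus the wildly ramified case (handled via Lemmas \ref{lemma-e_p} and \ref{lemma-wildelliptic}), whereas you split by potential reduction type; since Lemmas \ref{lemma-e_p} and \ref{lemma-wildelliptic} are stated and proved without any tameness assumption, your split works equally well and uses the same ingredients.
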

\begin{proof}
If $C$ is tamely ramified, then this follows from Theorem
\ref{thm-ratseries}. Hence, we may assume that $C$ is wildly
ramified. In this case, $p>1$ and $t_{tame}(C)=0$. For notational
convenience, we put $C(d)=C\times_K K(d)$ for each $d\in \N'$.

We can write
$$ S_{\phi}(C;T) = \sum_{a|e(C)'}\  \sum_{d \in \mathbb{N}', gcd(d,e(C)') = a} \phi(C (d))T^d $$
 By Lemmas \ref{lemma-e_p} and \ref{lemma-wildelliptic}, this expression equals
$$ \sum_{a|e(C)'} \left( \phi(C \times_K K(a)) \cdot \sum_{d \in \mathbb{N}', gcd(d,e(C(a))') = 1} T^{ad} \right) $$
Direct computation shows that this series belongs to
$\mathscr{Z}$, that it has a pole of order one at $T=1$, and that
it has strictly negative degree.
\end{proof}


\begin{thebibliography}{10}
\bibitem{sga3.2}
{\em Sch\'emas en groupes. {II}: Groupes de type multiplicatif, et
structure
  des sch\'emas en groupes g\'en\'eraux}.
\newblock S\'eminaire de G\'eom\'etrie Alg\'ebrique du Bois Marie 1962/64 (SGA
  3). Dirig\'e par M. Demazure et A. Grothendieck. Lecture Notes in
  Mathematics, Vol. 152. Springer-Verlag, Berlin, 1970.

\bibitem{sga7a}
{\em Groupes de monodromie en g\'eom\'etrie alg\'ebrique. {I}}.
\newblock Springer-Verlag, Berlin, 1972.
\newblock S\'eminaire de G\'eom\'etrie Alg\'ebrique du Bois-Marie 1967--1969
  (SGA 7 {I}), Dirig\'e par A. Grothendieck. Avec la collaboration de M.
  Raynaud et D. S. Rim, Lecture Notes in Mathematics, Vol. 288.

\bibitem{ananth}
S.~Anantharaman. \newblock{Sch\'emas en groupes, espaces
homog\`enes et espaces alg\'ebriques sur une base de dimension 1.}
\newblock{\em M\'emoires de la Soci\'et\'e Math\'ematique de France},
33:5--79, 1973.

\bibitem{begueri}
L.~Begueri.
\newblock {Dualit\'e sur un corps local \`a corps r\'esiduel alg\'ebriquement
  clos}.
\newblock {\em M{\'e}m. Soc. Math. Fr., Nouv. S{\'e}r. 4}, 121, 1980.

\bibitem{bertapelle}
A.~Bertapelle. \newblock{Formal {N}\'eron models and {W}eil
restriction.} \newblock {\em Math. Ann.}, 316(3):437--463, 2000.


\bibitem{neron}
S.~Bosch, W.~{L\"u}tkebohmert, and M.~Raynaud.
\newblock {\em {N\'eron models}}, volume~21 of
 {\em Ergebnisse der Mathematik und ihrer Grenzgebiete}. \newblock Springer-Verlag, 1990.

\bibitem{formner}
S.~Bosch and K.~Schl{\"o}ter.
\newblock N\'eron models in the setting of formal and rigid geometry.
\newblock {\em Math. Ann.}, 301(2):339--362, 1995.

\bibitem{B-X}
S.~Bosch and X.~Xarles.
\newblock {Component groups of N{\'e}ron models via rigid uniformization}.
\newblock {\em Math. Ann.}, 306:459--486, 1996.

\bibitem{chai}
C.L. Chai.
\newblock {N\'eron models for semiabelian varieties: congruence and change of
  base field}.
\newblock {\em Asian J. Math.}, 4(4):715--736, 2000.

\bibitem{conrad-chevalley}
B.~Conrad.
\newblock {A modern proof of Chevalley's theorem on algebraic groups.}
\newblock {\em J. Ramanujan Math. Soc.}, 17(1):1--18, 2002.

\bibitem{ELL}
B.~Edixhoven, Q.~Liu and D. Lorenzini.
\newblock The $p$-part of
the group of components of a N\'eron model.
\newblock{\em J. Algebr. Geom.}, 5(4):801--813, 1996.

\bibitem{Ha}
L.H.~Halle
\newblock{Stable reduction of curves and tame ramification.}
\newblock{To appear in Math.~Zeit.}, arXiv:0711.0896

\bibitem{HaNi}
L.H.~Halle and J.~Nicaise.
\newblock{Motivic zeta functions of abelian varieties, and the
monodromy conjecture.}
\newblock{\em preprint}, arXiv:0902.3755v3.

\bibitem{HaNi-new}
L.H.~Halle and J.~Nicaise.
\newblock{Motivic zeta functions of abelian varieties, and the
monodromy conjecture.}
\newblock{\em in preparation}.

\bibitem{liu}
Q.~Liu.
\newblock {\em Algebraic geometry and arithmetic curves}, volume~6 of {\em
  Oxford Graduate Texts in Mathematics}.
\newblock Oxford University Press, 2002.

\bibitem{N-X}
E.~Nart and X.~Xarles. \newblock{Additive reduction of algebraic
tori.} \newblock{\em Arch. Math.}, 57:460--466, 1991.



\bibitem{saito}
T.~Saito.
\newblock {Vanishing cycles and geometry of curves over a discrete valuation
  ring.}
\newblock {\em Am. J. Math.}, 109:1043--1085, 1987.


\bibitem{serrelocaux}
J.-P. Serre.
\newblock {\em {Corps locaux}}.
\newblock {Paris: Hermann \& Cie}, 1962.


\end{thebibliography}
\end{document}